\documentclass[10pt,a4paper]{article}
\usepackage[T1]{fontenc}
\usepackage{amsmath,amsthm,amssymb,xcolor,fullpage,enumitem}
\usepackage[hidelinks]{hyperref}
\usepackage{booktabs} 
\usepackage{array}    
\usepackage{pgfplots}
\usepackage{comment}
\usepackage{tikz}
\usetikzlibrary{calc, decorations.pathreplacing}
\pgfplotsset{compat=newest}
\usetikzlibrary{calc,patterns}
\usepgfplotslibrary{fillbetween}

\numberwithin{equation}{section}
\numberwithin{figure}{section}

\newtheorem{thm}{Theorem}[section]
\newtheorem{lemma}[thm]{Lemma}
\newtheorem{prop}[thm]{Proposition}
\newtheorem{cor}[thm]{Corollary}

\theoremstyle{definition}
\newtheorem{rmk}[thm]{Remark}

\theoremstyle{remark}
\newtheorem{example}[thm]{Example}

\DeclareMathOperator{\diag}{diag}
\DeclareMathOperator{\dist}{dist}

\newcommand{\eps}{{\varepsilon}}

\newcommand{\C}{{\mathbb C}}
\newcommand{\R}{{\mathbb R}}

\newcommand{\hf}{{\hat f}}
\newcommand{\hA}{{\hat A}}

\newcommand{\cE}{{\mathcal E}}
\newcommand{\cL}{{\mathcal L}}
\newcommand{\cP}{{\mathcal P}}

\newcommand{\tc}{{\tilde c}}
\newcommand{\tf}{{\tilde f}}
\newcommand{\tA}{{\widetilde A}}
\newcommand{\tT}{{\widetilde T}}
\newcommand{\tcP}{{\widetilde{\mathcal P}}}

\newcommand{\bpi}{{\breve{\pi}}}

\newcommand{\bDelta}{{\breve{\Delta}}}

\newcommand{\bh}{{\breve{h}}}

\newcommand{\bW}{{\breve{W}}}

\newcommand{\ddvarkappa}{{\ddot{\varkappa}}}
\newcommand{\ddnu}{{\ddot{\nu}}}
\newcommand{\ddn}{{\ddot{n}}}
\newcommand{\ddr}{{\ddot{r}}}
\newcommand{\ddx}{{\ddot{x}}}
\newcommand{\ddC}{{\ddot{C}}}
\newcommand{\ddR}{{\ddot{R}}}
\newcommand{\ddcP}{{\ddot{\cP}}}

\begin{document}

\title{Rigorous and effective numerics for Liverani-Saussol-Vaienti maps}
\author{
Alexey Korepanov\thanks{School of Sciences, Great Bay University, Dongguan 523000, PR China.
\url{khumarahn@gmail.com}}
\and
YuTong Wei\thanks{School of Mathematical Sciences, University of Science and Technology of China, Hefei 230026, PR China.
\url{wytustc@mail.ustc.edu.cn}}
\and
Caroline Wormell\thanks{School of Mathematics and Statistics, The University of Sydney, NSW 2006, Australia.
\url{caroline.wormell@sydney.edu.au}}}
\date{1 October 2026}

\maketitle

\begin{abstract}
    We make highly accurate numerical estimates of basic properties of the invariant
    measure for Liverani-Saussol-Vaienti intermittent maps, an archetypal slowly
    mixing dynamical system. We solve the challenge of precise and efficient
    estimation for a map with non-uniform expansion, where the transfer operator
    does not have a spectral gap. We do this using an Abel function, which ``solves''
    the neutral dynamics, and which we can compute accurately via an asymptotic expansion.

    Our work covers both finite and sigma-finite invariant measure cases.
    In particular, we obtain the first practical estimates for the parameter
    far from zero, including the sigma-finite case.
    This opens the door to much deeper numerical study of intermittent
    dynamics than was previously possible.

    A driving motivation for this work is the upcoming proof of optimal rates of
    \emph{memory loss} which works equally for finite and infinite invariant
    measure cases. No prior results on rates of memory loss for dynamical systems
    with infinite invariant measure exist, with a single exception of a recent
    paper by I.Chevyrev and A.K.\ on null recurrent Markov chains.
\end{abstract}

\section{Introduction}

Let $\gamma > 0$ and $T \colon (0,1] \to (0,1]$ be the Liverani-Saussol-Vaienti (LSV)
intermittent map~\cite{LSV99}:
\begin{equation}
    \label{eq:LSV}
    T(x) = \begin{cases}
        x (1 + 2^\gamma x^\gamma) , & x \in (0, 1/2] \\
         2 x - 1, & x \in (1/2,1]
    \end{cases}
\end{equation}
The interest in this transformation comes from physics~\cite{PM80}, where it serves
as a simple model for intermittent transitions between laminar flow and turbulence.
As perhaps the most basic chaotic dynamical system with nonuniform expansion,
it has been studied with great care. We have at our disposal fundamental
results such as decay of correlations~\cite{Y99, G04b},
invariance principles~\cite{MN05,CDKM20}, concentration inequalities~\cite{CCRV09,CG12,GM14},
linear response~\cite{BT16,K16,BS16}, a dynamical version of
renewal theory~\cite{S02,G04a,MT12,T15,T16}, and much more.

For each $\gamma > 0$, this system admits a unique (up to scaling) absolutely continuous
finite or sigma-finite invariant measure. When $\gamma < 1$ this measure is finite,
and when $\gamma \geq 1$ it is infinite. The density of this measure, which we denote
$h(x)$, is smooth away from $0$ (see~\cite[Remark~1.4]{BK24}), and near $0$ behaves asymptotically roughly
as $x^{-\gamma}$. In the setting of these maps, no closed forms are known for the invariant measure or most other related statistical quantities.

\begin{figure}[h!]\label{fig:h}
    \centering
    \begin{tikzpicture}
        \begin{axis}[
            height       = 2in,
            xmin         = 0, xmax = 1.1,
            ymin         = 0, ymax = 27,
            xtick        = {0.0, 0.5, 1.0},
            xticklabels  = {$0$, $1/2$, $1$},
            ytick        = {0, 10, 20, 30},
            axis x line  = bottom,
            axis y line  = left,
            line cap     = round,
            ]
            \addplot[red,thick] coordinates {
                    (0.02,27.9618) (0.03,19.5313) (0.04,15.2344) (0.05,12.6177)
                    (0.06,10.8516) (0.07,9.57621) (0.08,8.61014) (0.10,7.23982)
                    (0.12,6.31078) (0.15,5.36394) (0.18,4.71857) (0.20,4.39001)
                    (0.25,3.78485) (0.30,3.36766) (0.35,3.05986) (0.40,2.82165)
                    (0.45,2.63065) (0.50,2.47327) (0.55,2.34078) (0.60,2.22728)
                    (0.65,2.12863) (0.70,2.04186) (0.80,1.89564) (0.90,1.77644)
                    (1.00,1.67680)
                };
            \node[red,anchor=north west] at (axis cs:0.04,24) {$h(x)$};
        \end{axis}
    \end{tikzpicture}
    \caption{Invariant density $h$ for $\gamma = 95/100$.}
\end{figure}

The goal of this paper is a rigorous and accurate numerical estimation of $h$.
This is delicate yet relatively straightforward for uniformly expanding
maps~\cite{L01,GN14,W19,BS20}, but is highly challenging for intermittent maps
due to the nonuniform expansion near $0$.
Some papers attempted to approximate the transfer operator of the induced map
(i.e.\ the first return map to $[1/2,1]$), running into the issue that
this operator is an infinite, slowly-decaying sum over large iterates
of the map $T$, very slow to compute naively.
See~\cite{M10,FMS11,BBD15,NT23}.

We describe an approach which allows efficient estimation of $h$ and its derivatives with rigorous error bounds:
\begin{itemize}
    \item In a complex Bernstein ellipse with foci at $1/2$ and $1$, where we prove that
        $h$ is analytic and approximate it by a finite sum of Chebyshev polynomials
        (Section~\ref{sec:id}, in particular Theorem~\ref{thm:hind}).
    \item On $(0,1]$ using the values in the ellipse and an Euler-Maclaurin formula
        (Section~\ref{sec:h01}).
\end{itemize}

Our approach takes three key steps.
First, we conjugate the left branch of $T$ to a suitable Abel function $A$
(so that $T^n(x) = A^{-1}(A(x)-n)$), for which we have a quickly converging asymptotic
expansion. Thus we can compute orbits near zero. Second, working with the analytic
continuation of $A$, we employ the Euler-Maclaurin formula to accurately
compute the action of the transfer operator (as in~\cite{VW25}).
Finally, we approximate the transfer operator by a finite matrix acting on finitely
many Chebyshev polynomials. In an appropriate Hardy space, the approximation is very tight,
and we approximate $h$ by the principal eigenvector of this matrix. An earlier version of some of these steps appeared in the PhD thesis of C.Wormell~\cite{W20}.

The computational effort of our method grows polynomially with the number of {\it digits} of accuracy. Furthermore, the effort required using our methods does not vary massively with $\gamma$: the web app given below allows computation of properties of the map for a very wide range of $\gamma$ values. This contrasts with previous results, where the effort grew exponentially with the number of digits, and reasonable estimates of statistical properties were in practice restricted to $\gamma \lesssim \tfrac 14$.

A special application and motivation for our work is \emph{memory loss}:
the speed at which the dynamical system forgets its initial state.
It is measured as the rate of decay of $(T^n)_* (\mu - \nu)$ in total variation,
where $\mu$ and $\nu$ are sufficiently regular probability measures and
$(T^n)_*$ is the push-forward operator.
This is a well studied subject~\cite{AHNTV15,BBR19,KL21}, yet
at the moment of writing this manuscript, the literature has no results
for rates of memory loss in ergodic theory with infinite invariant measure
(when $\gamma \geq 1$ in our case).
In the simpler setting of Markov chains there is only the recent paper~\cite{CK25}.

We verify numerically, for a small interval of values of $\gamma$ at a time,
that $h'/h$ and $h''/h$ are monotone on $(0,1]$. This is very complex and technical,
but it allows a relatively simple proof of memory loss with optimal rates, independent
of whether the invariant measure is finite or infinite.
The memory loss itself is a subject of our follow-up paper, in preparation.

Finally, we note that Abel functions have been underutilised in studying intermittent
maps~\cite{P92, R99}, our study suggests their great utility in understanding fine
properties of the dynamical system. For example, it is an immediate byproduct of our
work that the invariant densities are analytic:

\begin{prop}
    \label{prop:anal}
    The invariant density of the LSV map is real-analytic on $(0,1]$ for all $\gamma > 0$.
\end{prop}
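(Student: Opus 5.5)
We reduce analyticity on $(0,1]$ to analyticity on $[1/2,1]$, and obtain the latter from the transfer operator of the first return map acting on a space of holomorphic functions. Write $T_0(x)=x(1+2^\gamma x^\gamma)$ for the left branch and $v(y)=(y+1)/2$ for the inverse of the right branch. Let $Y=[1/2,1]$ and let $F\colon Y\to Y$ be the first return map. Its inverse branches are $g_n = v\circ T_0^{-n}\circ v$ for $n\ge 0$, meaning $n$ steps near zero between two visits to the right branch. Equivalently, one can use the branches $T_0^{-n}\circ v$ landing in $(1/4,1/2]$ and then apply $v$.

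\textbf{Step 1: a complex domain mapped strictly inside itself.} Let $E$ be a complex neighbourhood of $Y$, for instance the Bernstein ellipse with foci $1/2,1$ used in Section~\ref{sec:id}. We need every $g_n$ to extend holomorphically to $E$ with $g_n(E)$ compactly contained in $E$, uniformly in $n$. The maps $v$ and $v^{-1}$ are affine, so everything rests on the analytic continuation of $T_0^{-n}$. For this we use the Abel function: $T_0^{-n}(z)=A^{-1}(A(z)+n)$ with $A(z)\sim \gamma^{-1}2^{-\gamma}z^{-\gamma}$. The point is that $A^{-1}$ is holomorphic on a sector-like region $\{w:\ |\arg w|<\theta,\ \Re w \text{ large}\}$, and translation by $n\ge 0$ preserves this region. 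Hence $T_0^{-n}$ maps a small complex neighbourhood $U$ of $[1/4,1/2]$ into a cusp-shaped region around the positive axis near $0$. Since $z\mapsto z^{-\gamma}$ sends narrow sectors to narrow sectors, the images $T_0^{-n}(U)$ stay in a fixed narrow sector $S$ at $0$ and shrink like $n^{-1/\gamma}$.

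\textbf{Step 2: the density on $Y$ is analytic.} Composing with $v$, all branches $g_n$ map $E$ into a fixed compact subset of $E$. The sector $S$ is thin and $v$ contracts by $1/2$, so the images land near $[1/2,3/4]$, well inside $E$ provided $E$ is chosen fat enough. The Jacobians $g_n'$ are holomorphic on $E$ and decay like $n^{-1-1/\gamma}$ uniformly there. This follows from $(T_0^{-n})'(z)=A'(z)/A'(T_0^{-n}z)$ together with the asymptotics of $A'$. Therefore the transfer operator $Pf=\sum_n g_n'\cdot f\circ g_n$ converges absolutely on bounded holomorphic functions on $E$. It is a compact operator there, by composition with a strict inner map. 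The standard Ruelle--Mayer argument then gives a positive leading eigenfunction, which must coincide with the (unique) $F$-invariant density on $Y$. Hence $h|_Y$ is analytic. I expect Steps 1--2 to be the main obstacle, and I would attack them with the asymptotic expansion of $A$ and its inverse. The difficulty is obtaining uniform control of $A^{-1}$ on a complex region near infinity, including its injectivity and holomorphy, rather than only along the real axis. I would simply assume that the paper's construction of $A$, used in Section~\ref{sec:id}, supplies this.

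\textbf{Step 3: extension to $(0,1]$.} For $x\in(0,1/2]$, invariance of $h$ under $T$ (the $F$-invariant density on $Y$ extends via the tower) gives
\[
h(x)=\sum_{n\ge1} (T_0^{-(n-1)}\circ v)'(y)\,h(y)\big|_{\,y=\text{preimage of }x},
\]
which is really a single term per $x$. More cleanly, $h(x)=(T_0^{-k})'(T_0^kx)^{-1}\cdots$. Concretely, take $k$ with $T_0^k(x)\in(1/4,1/2]$. On a neighbourhood of $x$ one has $h = h_1\circ T_0^{k}\cdot (T_0^{k})'$, up to also adding the right-branch contributions, which are finitely many analytic terms. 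Here $h_1$ is $h$ on $(1/4,1]$, which is analytic there: on $(1/2,1]$ by Step 2, and on $(1/4,1/2]$ via $h(x)=\tfrac12 h(v(x))+$ the left-branch term, a convergent analytic sum as in Step 2. Since $T_0^k$ is real-analytic with nonvanishing derivative on $(0,1/2]$, $h$ is analytic near each $x$. This local, finite composition argument completes the proof.
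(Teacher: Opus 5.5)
Your argument is correct in substance. On $[1/2,1]$ it is the paper's own argument: the paper's proof simply invokes Lemma~\ref{lem:h}, i.e.\ $\cL$ is compact on bounded holomorphic functions on a Bernstein ellipse, and $1$ is a simple leading eigenvalue whose eigenfunction is the real invariant density.

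The difference is in the passage to $(0,1]$. The paper applies the infinite backward series \eqref{eq:Sh} on all of $(0,1]$. This requires each term to continue holomorphically near every point and the series to converge uniformly there, which uses the same sector and Jacobian control as for $\cL$ (Remark~\ref{rmk:NoWa}, Lemmas~\ref{lem:CtoB} and~\ref{lem:Jp}). You use that series only near $[1/4,1/2]$. This is not wasted: something of the kind is needed to see that the density just below $1/2$ agrees with the holomorphic function from Step~2. You then reach smaller $x$ through the finite forward recursion \eqref{eq:h-inv},
\[
    h(x) = (T_0^k)'(x)\, h(T_0^k x) - \frac12 \sum_{j=1}^{k} (T_0^j)'(x)\, h\Bigl(\frac{1 + T_0^j x}{2}\Bigr),
\]
which has finitely many analytic terms. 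It needs nothing near $0$ beyond $T_0$ being real-analytic with $T_0'>0$ on $(0,1/2]$, so it is more elementary and suffices for real-analyticity.

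What the paper's route buys is a holomorphic continuation of $h$ to a sector-like region at $0$, because backward orbits of points of $S_C$ stay in $S_C$ and tend to $0$ with summable Jacobians. Your recursion cannot give this. Since $\Im A$ is invariant under $T_0$, the recursion only reaches points whose forward orbit enters a complex neighbourhood of $[1/4,1/2]$. Near $0$ these points form a thin horn $\{|\Im A(x)| \text{ bounded}\}$ tangent to the real axis. Sector-type control near $0$ is what Section~\ref{sec:h01} relies on, e.g.\ for the Cauchy estimates on disks of radius comparable to $x$ in Corollary~\ref{cor:hE}.

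Some slips to fix:
\begin{itemize}
\item The inverse branches of the return map are $v\circ T_0^{-n}$, as in \eqref{eq:L}, not $v\circ T_0^{-n}\circ v$.
\item Steps~1--2 do not need the Abel function. The paper gets complex control of $T_0^{-n}$ from univalence of $f$ on $S_C$ with $f^{-1}(S_C)\subset S_C$. It gets the Jacobian bound from $\Re \hf^{-1}(z)\ge \hf^{-1}(\Re z)$, in the proof of Lemma~\ref{lem:Jp}.
\item For $\gamma>1$ the ellipse cannot just be chosen ``fat enough'', since it must fit inside $S_C$. This is why the paper works with a pair $A\subset C$, with all branches mapping $C$ into $A$ (Lemma~\ref{lem:CtoB}).
\item The first display of Step~3 (``a single term per $x$'') is wrong: $x$ has two $T$-preimages, and iterating gives the infinite sum \eqref{eq:Sh}. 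Delete it and keep the finite recursion, which is what you actually use.
\end{itemize}
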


\begin{proof}
    The invariant density is analytic on $[1/2,1]$ by Lemma~\ref{lem:h} (see also \cite[Theorem~11.4]{BJ07}). Then, equation~\eqref{eq:Sh} gives analyticity on the full domain.
\end{proof}

\begin{rmk}
    There exist LSV-like maps where the invariant density is exactly
    $h(x) = x^{-\gamma} + (1+x)^{-\gamma}$, see~\cite{T00}.
    With the density known explicitly, some of our analysis becomes
    trivial; although computing e.g.\ the mean return time is still
    challenging.
\end{rmk}

\begin{rmk}
    Our results can be easily generalised to maps of the form
    $f(x) = x \hf(x^\gamma)$ near zero with $\hf$ analytic.
    However, the maps described in the previous remark are not
    of this form, and the asymptotic expansion~\eqref{eq:An}
    of the Abel function would contain many extra terms.
\end{rmk}

The paper is organized as follows: in Section~\ref{sec:Abel} we introduce
and approximate the Abel function; the main result is Theorem~\ref{thm:A}.
In Section~\ref{sec:sums} we estimate the sums that appear in the expression
for the induced transfer operator using the Euler-Maclaurin formula.
In Section~\ref{sec:id} we relate a finite dimensional approximation of
the induced transfer operator to the true one, and approximate
the invariant density $h(x)$ near $[1/2,1]$ by a finite sum of Chebyshev polynomials;
the main result is Theorem~\ref{thm:hind}.
Section~\ref{sec:app} contains technical bounds for our key applications:
approximation of average return time to $[1/2,1]$ and Lyapunov exponent.
In Section~\ref{sec:h01} we extend the estimation of the invariant density
to the full interval $(0,1]$.
In Section~\ref{sec:cond} we set up the verification that $h'/h$ and $h''/h$
are monotone, the conditions needed for the proof of memory loss.
The Appendix collects basic auxiliary proofs.

\subsubsection*{Code and examples}

\begin{example}
    (Section~\ref{sec:app})
    For $\gamma = 95/100$, the average return time to $[1/2,1]$ is
    \begin{align*}
        \mathtt{14.}&\mathtt{0733232200019395292415496996107566098033171694810767816970656955638725907638} \\
                    &\mathtt{2320444598389786058368555531687805455107565467321090680062888709278684158464} \\
                    &\mathtt{2349498540837744246167258662127059795378979037263342948570166140599348141831} \\
                    &\mathtt{998152685494675692076798951202529902815494250708576145481909686608666665739 \ldots}
    \end{align*}
    and the Lyapunov exponent is
    \begin{align*}
        \mathtt{0.}&\mathtt{1420636353704076392885014755358595810173327571468750083195247515173672923639} \\
                   &\mathtt{8074227810316135372232823286073279054769193620268515203117100954092496133766} \\
                   &\mathtt{1109580785592485411024630469429367517715069585739200674317420763635804075582} \\
                   &\mathtt{2632113670748717959061808939128761984643293098948941247048418492010098784074 \ldots}
    \end{align*}
    Such a computation takes a couple of days on a laptop.
\end{example}

\begin{example}
    (Section~\ref{sec:cond})
    For all $\gamma \in [1 - 10^{-51}, 1 + 10^{-51}]$,
    \[
        \Bigl(\frac{h'}{h}\Bigr)' \geq 0.57163
        \quad \text{and} \quad
        \Bigl(\frac{h''}{h}\Bigr)' \leq -1.91591
        \quad \text{on} \quad (0,1]
        .
    \]
    This computation is low precision; it takes seconds on a laptop, or minutes in a web browser.
\end{example}

Our code and a web demo are publicly available on github:\\
\hspace*{1cm} \url{https://github.com/khumarahn/khumarahn.github.io/tree/main/KWW26} \\
\hspace*{1cm} \url{https://khumarahn.github.io/KWW26}

\section{Abel function}
\label{sec:Abel}

Let $f \colon x \mapsto x (1 + 2^\gamma x^\gamma)$ be the left branch of~\eqref{eq:LSV}.
In this section we construct an Abel function $A(x)$, so that
\begin{equation}
    \label{eq:abelA}
    A(x) = A(f(x)) + 1
    .
\end{equation}
Our treatment is explicit and specific to our transformation;
see e.g.\ Szekeres~\cite{S58} for thorough and general theory.

\begin{figure}[h!]\label{fig:A}
    \centering
    \begin{tikzpicture}
        \begin{axis}[
            height       = 2in,
            xmin         = 0, xmax = 1.1,
            ymin         = 0, ymax = 25,
            xtick        = {0.0, 0.5, 1.0},
            xticklabels  = {$0$, $1/2$, $1$},
            ytick        = {0, 10, 20, 30},
            axis x line  = bottom,
            axis y line  = left,
            line cap     = round,
            ]
            \addplot[red,thick] coordinates {
                    (0.02,25.1827) (0.03,17.6356) (0.04,13.7201) (0.05,11.2967)
                    (0.06,9.63602) (0.07,8.41981) (0.08,7.48632) (0.10,6.13859)
                    (0.12,5.20428) (0.15,4.22898) (0.18,3.54689) (0.20,3.19297)
                    (0.25,2.52690) (0.30,2.05508) (0.35,1.69922) (0.40,1.41878)
                    (0.45,1.19050) (0.50,1.00000) (0.55,0.83787) (0.60,0.69767)
                    (0.65,0.57483) (0.70,0.46600) (0.80,0.28094) (0.90,0.12852)
                    (1.00,0)
                };
            \node[red,anchor=north west] at (axis cs:0.03,24) {$A(x)$};
        \end{axis}
    \end{tikzpicture}
    \caption{Abel function $A(x)$ for $\gamma = 95/100$.}
\end{figure}

We prove that $A(x)$ is analytic in a complex region near $0$, and we
learn to compute it accurately and efficiently. The main result is
Theorem~\ref{thm:A}.

It is convenient to work with $f(x)$ using the variable $z = x^\gamma$ and the
conjugate map $\hf \colon z \mapsto z(1 + 2^\gamma z)^\gamma$.
Note that $\hf$ is analytic and invertible in a neighborhood of $z = 0$.

Let $r = 2^\gamma (\gamma + 1)$ and consider the {\it petal}
\[
    \cP := \{z : \Re (z^{-1}) \geq r\} \subset \C
    .
\]
Fix $n \geq 1$ and consider the truncated Abel function
\begin{equation}
    \label{eq:An}
    \hA_n(z) = \frac{a_{-1}}{z} + a_\ell \log z + a_0 + a_1 z + \cdots + a_n z^n
    ,
\end{equation}
where the coefficients $a_{-1}, a_\ell, a_1, a_2, \ldots$ are chosen uniquely so that
$|\hA_n(z) - \hA_n(\hf(z)) - 1| = O(z^{n + 2})$ as $z \to 0$, see~\eqref{eq:an},
and $a_0$ is now arbitrary and later chosen so that $\hA_n(\hf^{-k}(1)) \approx k$
for large $k$.

We construct the {\it principal} Abel function in $z$-coordinates by
\begin{equation}
    \label{eq:A}
    \hA(z)
    = \lim_{k \to \infty} \bigl[\hA_n(\hf^{-k}(z)) - k \bigr]
    .
\end{equation}
This definition makes sense, and $\hA$ is analytic on $\cP$,
see Lemma~\ref{lem:A} below.

From the definition of $\hA$, it is immediate that
\begin{equation}
    \label{eq:abel}
    \hA(\hf^{-1}(z)) = \hA(z) + 1
    , \quad
    \hA(z) = \hA(\hf(z)) + 1
    .
\end{equation}
Define $A(x) = \hA(x^\gamma)$; note that~\eqref{eq:abelA} holds, and $A$ is analytic on
$\cP^{1 / \gamma}$.

Let
\[
    D_n(z)
    = \hA_n(\hf(z)) - \hA_n(z) + 1
    \qquad \text{and} \qquad
    M
    = \sup_{z \in \cP} \frac{|D_n(z)|}{|z|^{n+1}}
    .
\]
Denote
\[
    I_n
    = \int_0^\infty \frac{1}{(1 + s^2)^{(n+1)/2}} \, ds
    = \frac{\sqrt{\pi}}{2} \frac{\Gamma(\frac{n}{2})}{\Gamma(\frac{n+1}{2})}
    .
\]

\begin{thm}
    \label{thm:A}
    For all $z \in \cP$,
    \[
        |\hA(z) - \hA_n(z)|
        \leq C_0 |z|^n
        ,
        \qquad \text{where} \qquad
        C_0 = \frac{ 2 M }{\gamma 2^\gamma} I_n
        .
    \]
\end{thm}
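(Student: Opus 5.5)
Write $z_k = \hf^{-k}(z)$ for $z \in \cP$. I will telescope the definition~\eqref{eq:A}:
\[
    \hA(z) - \hA_n(z)
    = \lim_{K\to\infty} \sum_{k=0}^{K-1} \bigl[\hA_n(z_{k+1}) - \hA_n(z_k) - 1\bigr]
    = \sum_{k=1}^{\infty} D_n(z_k),
\]
since $\hf(z_{k+1}) = z_k$ gives $\hA_n(z_k) - \hA_n(z_{k+1}) + 1 = D_n(z_{k+1})$, up to sign. Hence
\[
    |\hA(z) - \hA_n(z)| \le M \sum_{k\ge1} |z_k|^{n+1},
\]
provided every $z_k \in \cP$. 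The proof therefore reduces to two facts: the petal is invariant under $\hf^{-1}$, and the backward orbit has a quantitative decay rate.

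\textbf{Step 1: dynamics in the coordinate $w = 1/z$.} We have $\hf(z) = z(1+2^\gamma z)^\gamma$, so in $w$-coordinates $\hf$ becomes
\[
    w \mapsto w(1+2^\gamma/w)^{-\gamma} = w - \gamma 2^\gamma + O(1/w).
\]
The inverse branch $g = \hf^{-1}$ in $w$-coordinates should therefore satisfy
\[
    \Re g(w) \ge \Re w + \gamma 2^\gamma \cdot c
\]
on $\{\Re w \ge r\}$, with $c = 1/2$. This is suggested by the constant $2/(\gamma 2^\gamma)$ in $C_0$. I would also expect $\Im g(w)$ to stay controlled, with $|\Im g(w) - \Im w|$ small. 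The target is
\[
    \Re w_k \ge \Re w + k\gamma 2^\gamma/2, \qquad |\Im w_k| \approx |\Im w|.
\]
The choice $r = 2^\gamma(\gamma+1)$ is what makes the remainder bound in the expansion of $(1+2^\gamma/w)^{-\gamma}$ work uniformly. Proving this via explicit bounds on the inverse, for example by a contraction or Rouché argument showing $g$ is well defined on the half-plane, is where I expect the \textbf{main obstacle}. The difficulty is keeping explicit constants for all $\gamma>0$, including large $\gamma$.

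\textbf{Step 2: summation.} With $w = u + iv$, $u \ge r$, we get
\[
    |z_k|^{n+1} = |w_k|^{-(n+1)} \le \bigl((u + k a)^2 + v^2\bigr)^{-(n+1)/2}, \qquad a = \gamma 2^\gamma/2.
\]
Since the summand is decreasing in $k$, the sum over $k\ge1$ is bounded by the integral over $t\ge0$:
\[
    \sum_{k\ge1} |z_k|^{n+1} \le \frac1a \int_{u}^\infty \frac{ds}{(s^2+v^2)^{(n+1)/2}}.
\]
Substituting $s = |w| \sigma$ and using $u \ge 0$ bounds this by
\[
    \frac{1}{a|w|^{n}} \int_0^\infty \frac{d\sigma}{(\sigma^2 + \sin^2\theta)^{(n+1)/2}}.
\]
That is not yet $I_n$. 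To get exactly $I_n$, I would instead parametrize the bound as
\[
    \bigl((u+ka)^2+v^2\bigr)^{-(n+1)/2} = |v|^{-(n+1)}\bigl(1+((u+ka)/|v|)^2\bigr)^{-(n+1)/2},
\]
and use the alternative estimate when $|v| < u$. The cleanest route is to bound $|w_k|^2 \ge |w|^2 + (ka)^2$, which holds since $u \ge 0$ makes the cross term $2uka \ge 0$. Then
\[
    \sum_{k\ge1} (|w|^2 + k^2a^2)^{-(n+1)/2} \le \int_0^\infty (|w|^2+t^2a^2)^{-(n+1)/2}\,dt = \frac{|w|^{-n}}{a} I_n.
\]
Multiplying by $M$ gives
\[
    |\hA(z) - \hA_n(z)| \le \frac{2M}{\gamma 2^\gamma} I_n |z|^n,
\]
which is the claim. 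This also shows the limit~\eqref{eq:A} converges, and locally uniform convergence yields analyticity. So in Step 1 I only need $\Re w_k \ge \Re w + ka$ together with $|\Im w_k| \ge |\Im w|$, or some comparable monotonicity. If the imaginary part can shrink, I would instead prove $|w_k|^2 \ge |w|^2 + k^2a^2$ directly by induction from $|g(w)|^2 \ge |w|^2 + 2a\Re w + a^2$.
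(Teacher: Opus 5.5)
Your reduction and your summation are exactly the paper's. The telescoped identity is Corollary~\ref{cor:E}, and the paper likewise establishes $|w_k|^2 \ge |w|^2 + (ka)^2$ with $a = \gamma 2^\gamma/2$, then compares the decreasing sum with $\int_0^\infty$ to obtain $2 M I_n/(\gamma 2^\gamma)$. The gap is Step~1: you flag it as the main obstacle but only assert it, and it carries all of the content. It consists of two facts on $\{\Re w \ge r\}$: (i) $\Re w_{k+1} \ge \Re w_k + a$, which also guarantees $z_k \in \cP$ so that $M$ applies; and (ii) $|\Im w_{k+1}| \ge |\Im w_k|$. Fact (i) is the easy half. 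With $b = 2^\gamma$ and $R(s) = s - s(1+b/s)^{-\gamma} - \gamma b$, one has $w_{k+1} = w_k + 2a + R(w_{k+1})$. The integral form of the Taylor remainder of $(1+x)^{-\gamma}$, together with $|1 + \sigma b/s| \ge 1$ for $\Re s > 0$ and $\sigma \in [0,1]$, gives $|R(s)| \le \gamma(\gamma+1) b^2/(2|s|) \le a$ whenever $\Re s \ge r$; this is Lemma~\ref{lem:tk}. The same computation gives $|R'(s)| \le \gamma(\gamma+1)b^2/(2|s|^2) < 1/2$ there, so a contraction argument on $\{\Re s \ge \Re w_k + a\}$ also produces the backward branch you were worried about.

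Fact (ii) is the missing idea, and it cannot be extracted from the size bound $|R| \le a$. That bound only says $w_{k+1} = w_k + 2a + \eps$ with $|\eps| \le a$. The choice $\eps = -ia$ is compatible with it, and if $\Im w_k > \Re w_k + 2a$ it gives $|w_{k+1}|^2 < |w_k|^2 + 2a \Re w_k + a^2$. So neither $|\Im w_k| \ge |\Im w|$ nor your fallback inequality follows from magnitude estimates. With (i) alone you only get a bound in terms of $(\Re z^{-1})^{-n}$, which is weaker than $|z|^n$ by the unbounded factor $(|z^{-1}|/\Re z^{-1})^n$ on $\cP$. What is needed is the sign of $\Im R$. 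Since $R(s) = -\gamma(\gamma+1)b^2/(2s) + O(|s|^{-2})$, its imaginary part has the sign of $\Im s$, so backward iteration pushes orbits away from the real axis. The paper makes this rigorous in Lemma~\ref{lem:Imt}: for the forward map $t \mapsto t(1+b/t)^{-\gamma}$ and $\Re t \ge b(\gamma+1)$, $|\Im|$ does not increase. For $\Im t > 0$, write $1 + b/t = \varrho e^{-i\varphi}$ with $\varrho > 1$ and $\varphi > 0$. The claim then reduces to $H(\varrho) = \varrho^{\gamma+1}\sin\varphi - \varrho\sin((\gamma+1)\varphi) + \sin(\gamma\varphi) > 0$. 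The hypothesis $\Re t \ge b(\gamma+1)$ forces $(\gamma+1)\varphi < \pi/6$, whence $H(1) > 0$ and $H'(\varrho) \ge 0$ for $\varrho \ge 1$. Applied with $w_k$ the forward image of $w_{k+1}$, this gives (ii). Then $|w_k|^2 \ge (\Re w + ka)^2 + (\Im w)^2 \ge |w|^2 + (ka)^2$, after which your Step~2 goes through verbatim.
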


In the rest of this section we justify definition~\eqref{eq:A}
and prove Theorem~\ref{thm:A}.
The constant $M$ is estimated later in Lemma~\ref{lem:M}.

First, we make explicit the coefficients $a_{-1}, a_\ell, a_1, \ldots, a_n$. They are obtained by
matching the coefficients at $1, z, \ldots, z^{n+1}$ in the Taylor expansion of~\eqref{eq:abel}:
\begin{equation}
    \label{eq:an}
    \begin{pmatrix}
        -\gamma b & 0 & 0 & \cdots & 0 \\
        b^2 \binom{-\gamma}{2} & -\gamma b & 0 & \cdots & 0 \\
        b^3 \binom{-\gamma}{3} & \frac{\gamma b^2}{2} & \gamma b & \cdots & 0 \\
        \vdots & \vdots & \vdots & \ddots & \vdots \\
        b^{n+2} \binom{-\gamma}{n+2} & -\gamma\frac{(-1)^{n} b^{n+1}}{n+1} & b^{n} \binom{\gamma}{n} & \cdots & n\gamma b
    \end{pmatrix}
    \begin{pmatrix}
        a_{-1} \\
        -a_\ell \\
        a_1 \\
        \vdots \\
        a_n
    \end{pmatrix}
    =
    \begin{pmatrix}
        -1 \\
        0 \\
        0 \\
        \vdots \\
        0
    \end{pmatrix}
    .
\end{equation}
In particular,
\[
    a_{-1} = \frac{1}{\gamma 2^\gamma}
    , \quad \text{and} \quad
    a_\ell = - \frac{1}{2} - \frac{1}{2 \gamma}
    .
\]

Let us consider the (second) change of coordinates
\[
    t = 1/z
    .
\]
Denote $z_k = \hf^{-k}(z)$, and $t_k = 1/z_k$.
Throughout, set
\[
    b = 2^\gamma
    , \qquad
    c = \gamma 2^\gamma
    .
\]

\begin{lemma}
    \label{lem:tk}
    Let $R(t) = t - t(1+b/t)^{-\gamma} - c$,
    so that $t_{j+1} - t_j = c + R(t_{j+1})$.
    Then
    \[
        |R(t)|
        \leq \frac{b^2 \gamma (\gamma + 1)}{2 |t|}
        ,
    \]
    and in particular if $\Re(t) \geq r$, then
    $\Re(t_j) - \Re(t) \geq c j / 2$.
\end{lemma}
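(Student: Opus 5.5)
The plan is to first verify the recursion identity, then bound $R$ by a second-order Taylor expansion with integral remainder, and finally run an induction on $j$ in which the only subtle point is to control $t_{j+1}$, which is defined implicitly.

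\emph{The recursion.} Since $z_{j+1}=\hf^{-1}(z_j)$, we have $z_j=\hf(z_{j+1})=z_{j+1}(1+bz_{j+1})^\gamma$. Inverting gives $t_j=t_{j+1}(1+b/t_{j+1})^{-\gamma}$. Subtracting this from $t_{j+1}$ yields exactly $t_{j+1}-t_j=c+R(t_{j+1})$, directly from the definition of $R$.

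\emph{The bound on $R$.} I would prove it for $\Re t>0$, which covers all uses since $\Re t_j\ge r>0$. Put $u=b/t$, so $\Re u\ge 0$. The second-order Taylor formula with integral remainder gives
\[
(1+u)^{-\gamma}=1-\gamma u+\gamma(\gamma+1)u^2\int_0^1(1-s)(1+su)^{-\gamma-2}\,ds .
\]
Hence
\[
R(t)=\frac{b}{u}\bigl(1-(1+u)^{-\gamma}\bigr)-\gamma b=-\gamma(\gamma+1)\,b\,u\int_0^1(1-s)(1+su)^{-\gamma-2}\,ds .
\]
For $\Re u\ge0$ we have $|1+su|\ge\Re(1+su)\ge1$. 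With the principal branch and a real exponent, the modulus is $|(1+su)^{-\gamma-2}|=|1+su|^{-\gamma-2}\le1$. Since $\int_0^1(1-s)\,ds=1/2$, this gives $|R(t)|\le \gamma(\gamma+1)b|u|/2=b^2\gamma(\gamma+1)/(2|t|)$.

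\emph{The growth of $\Re t_j$.} Note that if $\Re t\ge r=b(\gamma+1)$, then $|t|\ge r$, and so
\[
|R(t)|\le \frac{b^2\gamma(\gamma+1)}{2b(\gamma+1)}=\frac{c}{2}.
\]
I would show by induction that $\Re t_{j+1}\ge\Re t_j+c/2$ whenever $\Re t_j\ge r$; iterating from $t_0=t$ then gives $\Re t_j-\Re t\ge cj/2$. If we already knew $\Re t_{j+1}\ge r$, this would be immediate: taking real parts in $t_{j+1}=t_j+c+R(t_{j+1})$ gives $\Re t_{j+1}\ge \Re t_j+c-c/2$.

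\emph{Main obstacle: locating $t_{j+1}$.} The difficulty is that $t_{j+1}$ appears only implicitly, and we must first show it lies in the half-plane where the bound on $R$ holds. I would handle this with a self-map argument. Let $\Delta$ be the closed disc $|s-t_j-c|\le c/2$, and consider $G(s)=t_j+c+R(s)$.
\begin{itemize}
\item Every $s\in\Delta$ has $\Re s\ge\Re t_j+c/2\ge r$, so $|R(s)|\le c/2$ there.
\item Therefore $|G(s)-t_j-c|\le c/2$, so $G$ is continuous (indeed analytic) and maps $\Delta$ into itself.
\item By Brouwer's theorem $G$ has a fixed point $s^*\in\Delta$, that is, $\hf(1/s^*)=z_j$.
\end{itemize}

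It remains to identify $1/s^*$ with the branch $\hf^{-1}(z_j)$ used in the definition of $\hA$. Since $\hf$ is invertible near $0$ and $|1/s^*|\le 1/r$ is small, I would argue this via injectivity of $\hf$ on the petal. A cleaner alternative is to simply take this fixed-point construction as the definition of $\hf^{-1}$ on $\cP$, with uniqueness following from Rouché's theorem applied to $s\mapsto s-G(s)$ on $\Delta$. This branch bookkeeping is the step I expect to need the most care; once it is settled, the induction closes as described.
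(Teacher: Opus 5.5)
Your proof is correct and follows the paper's route. The paper uses the same second-order Taylor expansion with integral remainder for $(1+u)^{-\gamma}$ and the same bound $|1+su|\geq 1$ to get $|R(t)|\leq b^2\gamma(\gamma+1)/(2|t|)\leq c/2$, and from this $\Re t_{j+1}\geq \Re t_j+c/2$. Your fixed-point argument on the disc $|s-t_j-c|\leq c/2$ supplies a step the paper leaves implicit: it only says ``the result follows'', yet later cites this lemma for $\hf^{-1}(\cP)\subset\cP$. The branch issue you worry about also closes quickly. Any preimage $s$ of $t_j$ with $\Re s\geq r$ satisfies $s=t_j+c+R(s)$ with $|R(s)|\leq c/2$, so it lies in that disc, where Rouch\'e gives uniqueness.
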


\begin{proof}
    We use
    \[
        g(x) = (1+x)^{-\gamma}
        = 1 - \gamma x + x^2 \int_0^1 (1 - s) g''(s x) \, ds
    \]
    with $g''(s x) = \gamma (\gamma + 1) (1 + s x)^{-\gamma-2}$.
    Thus
    \begin{align*}
        R(t)
            & = t - t(1+b/t)^{-\gamma} - c
            \\
            & = - t \Bigl( \frac{b}{t} \Bigr)^2 \gamma (\gamma + 1)
            \int_0^1 (1 - s) \Bigl(1 + s \frac{b}{t}\Bigr)^{-\gamma - 2} \, ds
            .
    \end{align*}
    Since $|1 + s b / t| > 1$ for $t$ with $\Re(t) > 0$, the result follows.
\end{proof}

\begin{lemma}
    \label{lem:A}
    $\hA$ is well defined by~\eqref{eq:A} (in particular is independent of $n$), and is analytic in $\cP$. The limit
    converges uniformly on compact subsets of $\cP$.
\end{lemma}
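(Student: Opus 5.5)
We prove Lemma~\ref{lem:A} by writing $\hA_n(\hf^{-k}(z)) - k$ as a telescoping sum and bounding its increments with Lemma~\ref{lem:tk}. We first check that the backward orbit stays in the petal. If $z \in \cP$, i.e.\ $\Re(t) \geq r$ with $t = 1/z$, then Lemma~\ref{lem:tk} gives $\Re(t_j) \geq \Re(t) + cj/2 \geq r$, so every $z_j = \hf^{-j}(z)$ lies in $\cP$ and $|z_j| \leq 1/\Re(t_j) \leq 1/(r + cj/2)$. (We take $\hf^{-1}$ to be the branch analytic near $0$. On $\cP$ it is defined through the $t$-coordinate relation $t_{j+1} = t_j + c + R(t_{j+1})$; for $\Re t$ large the map $t \mapsto t(1+b/t)^{-\gamma}$ is a small perturbation of $t - c$ and is injective. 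I will take this as given from the construction.)

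For the telescoping step, write
\[
    \hA_n(z_k) - k = \hA_n(z) + \sum_{j=0}^{k-1} \bigl[\hA_n(z_{j+1}) - \hA_n(z_j) - 1\bigr].
\]
Since $\hf(z_{j+1}) = z_j$, each summand equals $-D_n(z_{j+1})$. By the definition of $M$ we have $|D_n(z_{j+1})| \leq M |z_{j+1}|^{n+1} \leq M (r + c(j+1)/2)^{-(n+1)}$. For $n \geq 1$ this is summable in $j$, uniformly in $z \in \cP$. The $M$-bound holds uniformly on $\cP$ because $D_n(z) = O(z^{n+2})$ at $0$ and $\cP$ is bounded. We also need $M < \infty$, i.e.\ $D_n$ bounded on $\cP$. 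This holds since $\hA_n$ and $\hf$ are analytic on $\cP \setminus \{0\}$ (with $\hf(\cP) \subset \cP$ by the forward version of Lemma~\ref{lem:tk}, and the log branch the principal one, as $\Re z > 0$), while the singular terms cancel in $D_n$ near $0$. Hence the partial sums converge uniformly on all of $\cP$, not just on compact subsets. Each term $z \mapsto \hA_n(\hf^{-k}(z)) - k$ is analytic on the interior of $\cP$, so by Weierstrass the limit $\hA$ is analytic there and continuous up to the boundary.

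For independence of $n$, take $m > n$. Then $\hA_m - \hA_n = (a_0^{(m)} - a_0^{(n)}) + \sum_{i=n+1}^m a_i z^i$, because the coefficients $a_{-1}, a_\ell, a_1, \ldots, a_n$ are uniquely determined by the triangular system~\eqref{eq:an} and so coincide for $m$ and $n$. Evaluating at $z_k \to 0$, the polynomial tail vanishes in the limit, and only the constant $a_0$ difference survives. Since $a_0$ is fixed by the normalisation $\hA_n(\hf^{-k}(1)) \approx k$, i.e.\ so that the limit at $z=1$ (or a fixed base point) is normalised, the constants agree and the limits coincide.

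The main obstacle is justifying the well-definedness and analyticity of $\hf^{-1}$ on $\cP$ together with the invariance $\hf^{-1}(\cP) \subset \cP$. I would handle this in the $t$-variable by showing that the fixed-point equation $t' = t + c + R(t')$ has a unique solution with $\Re t' \geq \Re t + c/2$. This follows by a contraction argument, since $|R'|$ is small when $\Re t \geq r$, using the integral formula for $R$ from Lemma~\ref{lem:tk}.
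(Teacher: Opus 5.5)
Your argument is the paper's argument. You use the same telescoping identity $\hA_n(\hf^{-k}(z)) - k = \hA_n(z) - \sum_{j=1}^k D_n(\hf^{-j}(z))$, Lemma~\ref{lem:tk} to get $|\hf^{-j}(z)| \le (r + cj/2)^{-1}$, the bound $|D_n(w)| \le M|w|^{n+1}$ with Weierstrass, and independence of $n$ from the fact that $\hA_m - \hA_n$ tends to a constant at $0$. Three of your additions are correct and fill in points the paper leaves implicit: the explicit role of $a_0$ and the normalisation, uniform convergence on all of $\cP$, and the contraction argument for $\hf^{-1}$.

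One side remark is false, though it carries no weight in the argument. In fact $\hf(\cP) \not\subset \cP$; for example, $\hf(1/r) = r^{-1}(1+b/r)^{\gamma} > 1/r$. So $M < \infty$ should instead be justified as in Lemma~\ref{lem:M}. There, $D_n(z)/z^{n+1}$ is given by a power series converging on $\{|z| < 1/b\}$. That disc contains the compact set $\overline{\cP} \subset \{|z| \le 1/r\}$, because $r = b(\gamma+1) > b$.
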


\begin{proof}
    Write
    \begin{equation}
        \label{lem:ADn}
        \hA_n (\hf^{-k}(z)) - k
        = \hA_n(z)
        - \sum_{j = 1}^k D_n(\hf^{-j}(z))
        .
    \end{equation}
    By Lemma~\ref{lem:tk}, $\hf^{-1}(\cP) \subset \cP$ and
    $|\hf^{-j}(z)| \lesssim j^{-1}$ uniformly on compact subsets of $\cP$.
    By the choice of coefficients of $\hA_n$, the terms on the right hand side above are
    bounded, up to a multiplicative constant, by $|\hf^{-j}(z)|^{n+1} \lesssim j^{-(n + 1)}$.
    Hence $\hA$ is defined as a limit of analytic functions on $\cP$ converging uniformly
    on compact subsets of $\cP$. Furthermore, $\hat A_m(z) - \hat A_n(z) \to 0$ as $z \to 0$, so the limit is independent of $n$. The result follows.
\end{proof}

Denote
\begin{equation}
    \label{eq:En}
    E_n(z)
    = \hA(z) - \hA_n(z)
    .
\end{equation}

\begin{cor}
    \label{cor:E}
    $E_n(z) = - \sum_{j=1}^\infty D_n(\hf^{-j}(z))$.
\end{cor}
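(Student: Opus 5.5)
The plan is to pass to the limit $k \to \infty$ in the telescoping identity~\eqref{lem:ADn} from the proof of Lemma~\ref{lem:A}. First I will re-derive that identity explicitly.

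Fix $z \in \cP$ and write $z_j = \hf^{-j}(z)$, with $z_0 = z$. Since $\hf(z_j) = z_{j-1}$, the definition of $D_n$ gives
\[
    D_n(z_j) = \hA_n(z_{j-1}) - \hA_n(z_j) + 1 .
\]
Summing over $j = 1, \dots, k$ telescopes to
\[
    \sum_{j=1}^k D_n(z_j) = \hA_n(z) - \hA_n(z_k) + k ,
\]
which is exactly~\eqref{lem:ADn}. For this to make sense, every $z_j$ must lie in $\cP$, the domain where $\hA_n$ (with its $\log$ branch) and $D_n$ are considered. This follows from $\hf^{-1}(\cP) \subset \cP$, which is a consequence of Lemma~\ref{lem:tk}.

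Next I will show that $\sum_j D_n(z_j)$ converges absolutely. By the definition of $M$ and $z_j \in \cP$, we have $|D_n(z_j)| \le M |z_j|^{n+1}$. By Lemma~\ref{lem:tk}, with $t = 1/z$ and $\Re t \ge r$,
\[
    |z_j| = \frac{1}{|t_j|} \le \frac{1}{\Re t_j} \le \frac{1}{r + cj/2} .
\]
Hence $|D_n(z_j)| \le M (r + cj/2)^{-(n+1)}$, which is summable for every $n \ge 1$. This bound is uniform in $z \in \cP$, not just on compact subsets.

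Finally I let $k \to \infty$ in~\eqref{lem:ADn}. The left-hand side $\hA_n(\hf^{-k}(z)) - k$ tends to $\hA(z)$ by definition~\eqref{eq:A}, which is justified by Lemma~\ref{lem:A}. The right-hand side tends to $\hA_n(z) - \sum_{j \ge 1} D_n(z_j)$ by the absolute convergence above. Subtracting $\hA_n(z)$ and using~\eqref{eq:En} gives $E_n(z) = -\sum_{j=1}^\infty D_n(\hf^{-j}(z))$.

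There is no real obstacle here. The only point needing care is that the orbit $z_j$ stays in $\cP$, so that the bound defining $M$ applies, and that $M$ is finite; the statement implicitly assumes the latter, and it is established in Lemma~\ref{lem:M}.
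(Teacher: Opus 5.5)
Your proof is correct and follows the paper's route. The paper gives no separate proof: the corollary is read off from the telescoping identity~\eqref{lem:ADn} in the proof of Lemma~\ref{lem:A} by letting $k \to \infty$. The tail bound you use, $|D_n(\hf^{-j}(z))| \le M |\hf^{-j}(z)|^{n+1}$ together with the growth of $\Re(t_j)$ from Lemma~\ref{lem:tk}, is exactly the estimate the paper then uses in the proof of Theorem~\ref{thm:A}.
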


\begin{proof}[Proof of Theorem~\ref{thm:A}]
    Note that $t_k = t_{k+1} (1 + b / t_{k+1})^{-\gamma}$. By Lemma~\ref{lem:Imt},
    if $\Re(t) \geq b (\gamma + 1)$, then for all $k \geq 0$,
    \[
        |\Im(t_{k+1})| \geq |\Im(t_k)|
        .
    \]
    With Lemma~\ref{lem:tk}, this implies
    \[
        |t_j|^2
        \geq |t|^2 + (c j / 2)^2
        .
    \]
    Using Corollary~\ref{cor:E},
    \begin{align*}
        |E_n(z)|
            & \leq \sum_{j=1}^\infty |D_n(\hf^{-j}(z))|
            \leq M \sum_{j=1}^\infty |\hf^{-j}(z)|^{n+1}
            = M \sum_{j=1}^\infty \frac{1}{|t_j|^{n+1}}
            \\
            & \leq M \sum_{j=1}^\infty
            \frac{1}{(|t|^2 + (c j / 2)^2)^{(n + 1) / 2}}
            \leq M \int_0^\infty
            \frac{1}{(|t|^2 + (c u / 2)^2)^{(n + 1) / 2}} \, du
            \\
            & = \frac{ 2 M }{c |t|^n} \int_0^\infty
            \frac{1}{(1 + s^2)^{(n + 1) / 2}} \, ds
            = \frac{ 2 M }{c |t|^n} I_n
            .
    \end{align*}
\end{proof}

\subsection{Estimation of \texorpdfstring{$M$}{M}}
\label{sec:M}

Now we estimate $M$. Recall that $D_n(z) = \hA_n(\hf(z)) - \hA_n(z) + 1$,
so for $|z| < 1 / b$,
\begin{align*}
    D_n(z)
    & = 1 + \frac{a_{-1}}{z} \bigl[ (1 + b z)^{-\gamma} - 1 \bigr]
    + a_\ell \gamma \log(1 + b z)
    + \sum_{k = 1}^n a_k z^k \bigl[ (1 + b z)^{k \gamma} - 1 \bigr]
    = \sum_{j = 0}^\infty c_j z^j
    ,
\end{align*}
where $c_0 = 1 - \gamma b a_{-1}$ and for $j \geq 1$,
\[
    c_j  = a_{-1} b^{j+1} \binom{-\gamma}{j+1}
    + a_\ell \gamma \frac{(-1)^{j-1} b^j}{j}
    + \sum_{k=1}^{\min(n, j-1)} a_k b^{j-k} \binom{k\gamma}{j-k}
    .
\]

\begin{rmk}
    \label{rmk:cj}
    By~\eqref{eq:an}, $c_j = 0$ for $j \leq n$.
\end{rmk}

\begin{lemma}
    \label{lem:M}
    Let
    \[
        G(z)
        = \frac{|a_{-1}|}{z} \bigl[ (1 - b z)^{-\gamma} - 1 \bigr]
        - |a_\ell| \gamma \log(1 - b z)
        + \sum_{k = 1}^n |a_k| z^k \bigl[ (1 - b z)^{-k \gamma} - 1 \bigr]
        .
    \]
    Let $g_j$ be the Maclaurin coefficients of $G(z)$,
    given explicitly by $g_0 = |a_{-1}| b \gamma$, and for $j \geq 1$,
    \begin{align*}
        g_j & = |a_{-1}| b^{j+1} \binom{j+\gamma}{j+1}
        + |a_\ell| \gamma \frac{b^j}{j}
        + \sum_{k=1}^{\min(n, j-1)} |a_k| b^{j-k} \binom{j-k+k\gamma-1}{j-k}
        .
    \end{align*}
    Then for each $K \geq 0$,
    \[
        \sup_{z \in \cP} \frac{|D_n(z)|}{|z|^{n+1}}
        \leq M
        \qquad \text{with} \qquad
        M
        = r^{n + 1} \biggl[
            \sum_{j = n + 1}^{n + K} |c_j| r^{-j}
            + G(r^{-1}) -\sum_{j = 0}^{n + K } g_j r^{-j}
        \biggr]
        .
    \]
\end{lemma}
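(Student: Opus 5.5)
The plan is to bound $D_n(z)/z^{n+1}$ by its power series, using Remark~\ref{rmk:cj} to discard the first $n+1$ coefficients and the majorant $G$ to control the tail. First I establish where the series converges. For $z\in\cP$ we have $\Re(1/z)\ge r$, hence $|z|\le 1/r$. Since $r = b(\gamma+1) > b$, this gives $|z| \le 1/r < 1/b$. So $D_n(z)=\sum_{j\ge0} c_j z^j$ converges absolutely on $\cP$, and by Remark~\ref{rmk:cj} (together with $c_0 = 1-\gamma b a_{-1} = 0$, because $a_{-1} = 1/(\gamma b)$),
\[
    \frac{|D_n(z)|}{|z|^{n+1}}
    \le \sum_{j\ge n+1} |c_j|\,|z|^{j-n-1}
    \le r^{n+1}\sum_{j\ge n+1}|c_j| r^{-j},
\]
where the last step uses $|z|\le r^{-1}$ and $j-n-1\ge 0$.

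Next I split the tail at $j=n+K$. The terms $n+1\le j\le n+K$ are kept exactly. For $j>n+K$ I compare coefficientwise, $|c_j|\le g_j$, by the triangle inequality applied to the three pieces of $c_j$:
\begin{itemize}
    \item $|\binom{-\gamma}{j+1}| = \binom{j+\gamma}{j+1}$, the standard identity $|\binom{-\gamma}{m}| = \binom{\gamma+m-1}{m}$;
    \item $|(-1)^{j-1}b^j/j| = b^j/j$;
    \item $|\binom{k\gamma}{j-k}| \le \binom{j-k+k\gamma-1}{j-k}$, i.e.\ $|\binom{\alpha}{m}|\le |\binom{-\alpha}{m}|$ for $\alpha=k\gamma>0$.
\end{itemize}
The third inequality follows from $|\alpha(\alpha-1)\cdots(\alpha-m+1)| \le \alpha(\alpha+1)\cdots(\alpha+m-1)$, which holds termwise because $|\alpha-i|\le\alpha+i$.

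I then confirm that the $g_j$ are indeed the Maclaurin coefficients of $G$. The expansion $(1-bz)^{-\gamma} = \sum_m \binom{\gamma+m-1}{m} b^m z^m$ handles the first and third terms of $G$, and $-\log(1-bz) = \sum_m b^m z^m/m$ handles the second. All $g_j\ge0$, and $G$ converges at $z=r^{-1}<1/b$, so
\[
    \sum_{j>n+K} |c_j| r^{-j} \le \sum_{j>n+K} g_j r^{-j} = G(r^{-1}) - \sum_{j=0}^{n+K} g_j r^{-j}.
\]
Combining the exact head with this tail bound gives the claimed $M$.

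The main obstacle is only bookkeeping. One point is the binomial identities. The other is the conventions of the statement: its $g_0$ must match the $j=0$ coefficient of $G$, namely $|a_{-1}|b\gamma$, with the log and $a_k$ terms contributing nothing at $j=0$. I also need that the coefficient formula for $c_j$ agrees with the one for $g_j$ index by index, in particular the range $k\le\min(n,j-1)$.
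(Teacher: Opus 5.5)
Your proposal is correct and follows the paper's proof. You discard $c_0,\dots,c_n$ via Remark~\ref{rmk:cj}, keep $c_{n+1},\dots,c_{n+K}$ exactly, and majorize the tail by the Maclaurin coefficients $g_j\ge|c_j|$ of $G$; the only cosmetic difference is that you substitute $|z|\le r^{-1}<b^{-1}$ termwise at the outset, whereas the paper first bounds at general $|z|<b^{-1}$ and then uses that the positive-coefficient majorant is increasing in $|z|$.
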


\begin{proof}
    Note that $G(z)$ and $D_n(z)$
    are analytic for $|z| < b^{-1}$, and
    by Remark~\ref{rmk:cj}, $c_j = 0$ for $j \leq n$.
    By construction, $g_j \geq |c_j|$ for all $j$.
    Thus, for any $z$ with $|z| < b^{-1}$:
    \begin{align*}
        \frac{|D_n(z)|}{|z|^{n + 1}}
        & \leq \frac{1}{|z|^{n + 1}} \biggl[
            \Bigl| \sum_{j = n + 1}^{n + K} c_j z^j \Bigr|
            + \sum_{j = n + K + 1}^\infty |c_j| |z|^j
        \biggr]
        \\
        & \leq \frac{1}{|z|^{n + 1}} \biggl[
            \Bigl| \sum_{j = n + 1}^{n + K} c_j z^j \Bigr|
            + \sum_{j = n + K + 1}^\infty g_j |z|^j
        \biggr]
        \\
        & \leq \frac{1}{|z|^{n + 1}} \biggl[
            \sum_{j = n + 1}^{n + K} |c_j| \, |z|^j
        + G(|z|) -\sum_{j = 0}^{n + K } g_j |z|^j
        \biggr]
        .
    \end{align*}
    The right hand side above has all positive coefficients and
    is therefore increasing with $|z|$.
    Since $z \in \cP$ implies $|z| \leq r^{-1} \leq b^{-1}$,
    the result follows.
\end{proof}

\subsection{\texorpdfstring{$t$}{t}-domain and derivatives}

It will be convenient for us to use the variable $t = z^{-1}$ and consider
$\tA(t) = \hA(t^{-1})$ and $\tA_n(t) = \hA_n(t^{-1})$. Then
\begin{equation}
    \label{eq:Atn}
    \tA_n(t) = a_{-1} t - a_\ell \log t + a_0 + \frac{a_1}{t} + \cdots + \frac{a_n}{t^n}
    .
\end{equation}

Let $\tcP = \cP^{-1} = \{ t \in \C : \Re(t) \geq r \}$ be the petal $\cP$ in the $t$-variable.

Let $\cP_1$, $\tcP_1$ denote a slightly smaller petal than $\cP$:
\begin{equation}
    \label{eq:cP1}
    \cP_1 = \{ z \in \C : \Re(z^{-1}) \geq r_1 \}
    , \quad
    \tcP_1 = \{ t \in \C : \Re(t) \geq r_1 \}
    ,
\end{equation}
where $r_1 > r + 1$.
Let $C_0$ be as in Theorem~\ref{thm:A}.

\begin{lemma}
    \label{lem:CAn}
    $|\tA^{(k)}(t) - \tA_n^{(k)}(t)| \leq C_0 k! (|t| - 1)^{-n}$ for $k \geq 1$ and $t \in \tcP_1$.
    Subsequently,
    \[
        \sup_{t \in \tcP_1} |\tA^{(k)}(t) - \tA_n^{(k)}(t)|
        \leq C_0 k! (r_1 - 1)^{-n}
        .
    \]
\end{lemma}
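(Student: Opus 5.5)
Since $\tA - \tA_n$ is analytic on $\tcP$, the natural route is the Cauchy integral formula for derivatives applied to $\tE_n(t) := \tA(t) - \tA_n(t) = E_n(t^{-1})$. Theorem~\ref{thm:A} gives $|E_n(z)| \leq C_0 |z|^n$ on $\cP$, that is,
\[
    |\tE_n(s)| \leq C_0 |s|^{-n}
    \quad \text{for all } s \in \tcP .
\]
Fix $t \in \tcP_1$ and let $\Gamma$ be the circle $\{ s : |s - t| = 1 \}$. Since $\Re(t) \geq r_1 > r + 1$, every $s \in \Gamma$ has $\Re(s) \geq \Re(t) - 1 > r$. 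Hence $\Gamma$ and the closed disc it bounds lie in the interior of $\tcP$, where $\tE_n$ is analytic by Lemma~\ref{lem:A}. This containment is the only place the hypothesis $r_1 > r + 1$ is used, and it is why the radius is $1$.

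The Cauchy estimate on this circle reads
\[
    |\tE_n^{(k)}(t)| \leq \frac{k!}{1^{k}} \max_{s \in \Gamma} |\tE_n(s)| .
\]
On $\Gamma$ we have $|s| \geq |t| - 1$, and $|t| - 1 \geq \Re(t) - 1 > r > 0$, so $|s|^{-n} \leq (|t| - 1)^{-n}$. Combining,
\[
    |\tA^{(k)}(t) - \tA_n^{(k)}(t)| \leq C_0\, k!\, (|t| - 1)^{-n} ,
\]
which is the first claim. The estimate in fact holds for every $k \geq 0$; the case $k = 0$ is just Theorem~\ref{thm:A} again.

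For the supremum, note that $|t| \geq \Re(t) \geq r_1$ on $\tcP_1$, so $(|t| - 1)^{-n} \leq (r_1 - 1)^{-n}$, which gives the uniform bound.

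The one step needing care is the analyticity input. The $\log t$ term in $\tA_n$ uses the principal branch, which is analytic on the right half-plane containing $\tcP$. Also, $\tA$ is analytic on a neighbourhood of the closed disc because the disc sits strictly inside $\{\Re(s) > r\}$. With these checked, the rest is routine.
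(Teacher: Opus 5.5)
Your proposal is correct and is essentially the paper's argument: the Cauchy estimate on the unit circle $|s-t|=1$, combined with Theorem~\ref{thm:A} and $|s| \geq |t|-1$. You also spell out why that circle and its disc lie inside $\tcP$, which is where $r_1 > r+1$ enters; the paper leaves this implicit.
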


\begin{proof}
    $\hat A_n$ is analytic on $\cP$ by construction, and $\hat A$ is analytic on $\cP$ by Lemma~\ref{lem:A}.

    We can therefore apply the Cauchy maximum principle and Theorem~\ref{thm:A} to get that
    \[
        |\tA^{(k)}(t) - \tA_n^{(k)}(t)|
        \leq k! \sup_{|s - t| = 1} |\tA(s) - \tA_n(s)|
        \leq k! \sup_{|s - t| = 1} C_0 |s|^{-n}
        = C_0 k! (|t| - 1)^{-n}
        .
    \]
\end{proof}

\begin{lemma}
    \label{lem:CA}
    For $t \in \tcP_1$,
    \[
        |t (\tA'(t) - a_{-1})|
        \leq r_1 C_1
        ,
    \]
    where
    \[
        C_1
        = \frac{|a_\ell|}{r_1} + \sum_{k = 1}^{n} \frac{k |a_k|}{r_1^{k+1}}
        + \frac{C_0}{(r_1 - 1)^n}
        .
    \]
    Consequently,
    \[
        \sup_{t \in \tcP_1} |\tA'(t) - a_{-1}|
        = \sup_{z \in \cP_1} |z^2 \hA'(z) + a_{-1}|
        \leq C_1
        ,
    \]
    and, assuming $C_1 < a_{-1}$,
    \[
        \sup_{w \in \tA(\tcP_1)} \bigl| (\tA^{-1})'(w) - a_{-1}' \bigr|
        \leq C_1'
        ,
    \]
    where
    \[
        a_{-1}' = \frac{a_{-1}}{a_{-1}^2 - C_1^2}
        \quad \text{and} \quad
        C_1' = \frac{C_1}{a_{-1}^2 - C_1^2}
        .
    \]
\end{lemma}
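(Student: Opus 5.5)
The plan is to split $\tA'(t) - a_{-1}$ into an explicit part from the truncated expansion and an error part controlled by Lemma~\ref{lem:CAn}. Differentiating~\eqref{eq:Atn} gives
\[
    \tA_n'(t) - a_{-1} = -\frac{a_\ell}{t} - \sum_{k=1}^n \frac{k a_k}{t^{k+1}},
\]
so that
\[
    t\bigl(\tA'(t) - a_{-1}\bigr) = -a_\ell - \sum_{k=1}^n \frac{k a_k}{t^k} + t\bigl(\tA'(t) - \tA_n'(t)\bigr).
\]
On $\tcP_1$ we have $|t| \geq \Re(t) \geq r_1$, so the polynomial part is bounded termwise by $|a_\ell| + \sum_k k|a_k| r_1^{-k}$, which is $r_1$ times the first two summands of $C_1$.

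For the remainder, Lemma~\ref{lem:CAn} with $k=1$ gives $|t|\,|\tA'(t) - \tA_n'(t)| \leq C_0 |t| (|t|-1)^{-n}$. This is where I expect the only real care to be needed. The function $u \mapsto u(u-1)^{-n}$ is decreasing on $u > 1$ when $n \geq 1$, because its logarithmic derivative is $1/u - n/(u-1) < 0$. Hence for $|t| \geq r_1$ the remainder is at most $r_1 C_0 (r_1-1)^{-n}$, which is $r_1$ times the last summand of $C_1$. If the boundary case $n \geq 1$ were not available one would need a different route, but $n \geq 1$ is fixed throughout. Adding the two parts gives $|t(\tA'(t) - a_{-1})| \leq r_1 C_1$.

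The first consequence follows by dividing by $|t| \geq r_1$, which gives $|\tA' - a_{-1}| \leq C_1$. Translating back to $z$ via $\tA(t) = \hA(1/t)$ gives $\tA'(t) = -z^2 \hA'(z)$, hence $\tA'(t) - a_{-1} = -(z^2 \hA'(z) + a_{-1})$, which matches the stated equality of suprema.

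For the inverse, write $w = \tA(t)$, so that $(\tA^{-1})'(w) = 1/\tA'(t)$ with $\tA'(t) = a_{-1} + \delta$ and $|\delta| \leq C_1 < a_{-1}$. Then $1/(a_{-1}+\delta)$ lies in the image of the disk $|\zeta - a_{-1}| \leq C_1$ under inversion. Since that disk excludes $0$, the image is a disk symmetric about the real axis with real diameter endpoints $1/(a_{-1} \pm C_1)$. Its center is
\[
    \frac{1}{2}\Bigl(\frac{1}{a_{-1}-C_1} + \frac{1}{a_{-1}+C_1}\Bigr) = \frac{a_{-1}}{a_{-1}^2 - C_1^2} = a_{-1}',
\]
and its radius is $C_1/(a_{-1}^2 - C_1^2) = C_1'$. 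This gives the final bound. Here I am using that $\tA'$ does not vanish on $\tcP_1$, which the condition $C_1 < a_{-1}$ ensures, so that $\tA^{-1}$ is locally defined on $\tA(\tcP_1)$ with derivative $1/\tA'$.
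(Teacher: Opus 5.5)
Your proposal is correct and follows the paper's proof essentially verbatim. You use the same split into the explicit $\tA_n$ part plus the Lemma~\ref{lem:CAn} remainder, and the same monotonicity of $u(u-1)^{-n}$ to place the supremum at $|t| = r_1$. You also fill in the consequences the paper calls straightforward in the intended way: division by $|t| \geq r_1$, the identity $\tA'(t) = -z^2 \hA'(z)$, and the image of the disk $|\zeta - a_{-1}| \leq C_1$ under $\zeta \mapsto 1/\zeta$, whose center and radius are exactly $a_{-1}'$ and $C_1'$. If you want $\tA^{-1}$ to be a genuine global inverse rather than a local one, note that $\Re \tA' \geq a_{-1} - C_1 > 0$ on the convex set $\tcP_1$ gives injectivity.
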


\begin{proof}
    Suppose that $t \in \tcP_1$.
    Using~\eqref{eq:Atn} and the triangle inequality,
    \[
        |t(\tA_n'(t) - a_{-1})|
        = \Bigl| a_\ell + \frac{a_1}{t} + \cdots + \frac{n a_n}{t^n} \Bigr|
        \leq |a_\ell| + \sum_{k = 1}^{n} \frac{k |a_k|}{r_1^k}
        .
    \]
    Write
    \[
        |t(\tA'(t) - a_{-1})|
        \leq |t(\tA_n'(t) - a_{-1})| + |t(\tA'(t) - \tA_n'(t))|
        .
    \]
    By Lemma~\ref{lem:CAn}, $|\tA'(t) - \tA_n'(t)| \leq C_0 (|t| - 1)^{-n}$.
    Since $|t| (|t| - 1)^{-n}$ is monotonically decreasing in $|t|$ for $|t| > 1$ and $n \geq 1$,
    its supremum on $\tcP_1$ is attained at $|t| = r_1$. Combining these bounds, we get
    $|t(\tA'(t) - a_{-1})| \leq r_1 C_1$. The remaining bounds are straightforward.
\end{proof}

\begin{rmk}
    \label{rmk:a1_m_C1}
    Everywhere further we assume that $r_1$ is large enough so that $C_1 < a_{-1}$.
\end{rmk}

\begin{lemma}
    \label{lem:CA2}
    For $t \in \tcP_1$,
    \[
        |t^2 \tA''(t) - a_\ell|
        \leq \sum_{k=1}^n \frac{k (k+1) |a_k|}{|t|^k} + \frac{2 C_0 |t|^2}{(|t| - 1)^n}
        .
    \]
    Subsequently,
    \[
        \sup_{z \in \cP_1} |z^3 \hA''(z) - 2 a_{-1}|
        \leq C_2
        ,
        \quad \text{where} \quad
        C_2
    = 2 C_1 + \frac{|a_\ell|}{r_1}
        + \sum_{k=1}^n \frac{k (k+1) |a_k|}{r_1^{k+1}} + \frac{2 C_0 r_1}{(r_1 - 1)^n}
        .
    \]
\end{lemma}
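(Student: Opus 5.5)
The plan is to differentiate the truncated expansion~\eqref{eq:Atn} directly, control the remainder $\tA - \tA_n$ with Lemma~\ref{lem:CAn}, and then pass from the $t$-variable to the $z$-variable by the chain rule.

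\textbf{First inequality.} Differentiating~\eqref{eq:Atn} twice gives
\[
    \tA_n''(t) = \frac{a_\ell}{t^2} + \sum_{k=1}^n \frac{k(k+1) a_k}{t^{k+2}}
    ,
\]
so that
\[
    t^2 \tA_n''(t) - a_\ell = \sum_{k=1}^n \frac{k(k+1)a_k}{t^k}
    .
\]
Write $t^2\tA''(t) - a_\ell = \bigl(t^2\tA_n''(t) - a_\ell\bigr) + t^2\bigl(\tA''(t) - \tA_n''(t)\bigr)$. The first bracket is bounded termwise by the triangle inequality. For the second, Lemma~\ref{lem:CAn} with $k = 2$ gives $|\tA''(t) - \tA_n''(t)| \leq 2 C_0 (|t|-1)^{-n}$ on $\tcP_1$. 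Together these give the first displayed estimate.

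\textbf{Change of variables.} Since $\hA(z) = \tA(1/z)$, we have $t = 1/z$ and $dt/dz = -t^2$. Hence
\[
    \hA'(z) = -t^2 \tA'(t)
    , \qquad
    \hA''(z) = 2t^3\tA'(t) + t^4 \tA''(t)
    .
\]
Multiplying by $z^3 = t^{-3}$ yields
\[
    z^3\hA''(z) - 2a_{-1} = 2\bigl(\tA'(t) - a_{-1}\bigr) + t\tA''(t)
    .
\]
The first term is at most $2C_1$ in absolute value by Lemma~\ref{lem:CA}. For the second, write
\[
    t \tA''(t) = \frac{a_\ell + \bigl(t^2\tA''(t) - a_\ell\bigr)}{t}
    ,
\]
and use the first part of the lemma. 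This gives
\[
    |t\tA''(t)| \leq \frac{|a_\ell|}{|t|} + \sum_{k=1}^n \frac{k(k+1)|a_k|}{|t|^{k+1}} + \frac{2C_0 |t|}{(|t|-1)^n}
    .
\]

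\textbf{Supremum over the petal.} It remains to take the supremum over $t \in \tcP_1$, where $|t| \geq \Re t \geq r_1 > 1$. Every term is decreasing in $|t|$. For $|t|/(|t|-1)^n$ with $n \geq 1$ this is the same monotonicity fact already used in the proof of Lemma~\ref{lem:CA}. So the supremum is attained at $|t| = r_1$, which produces exactly $C_2$.

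I do not expect a genuine obstacle. The only places needing care are the chain-rule computation for $\hA''$ and the monotonicity of the remainder term.
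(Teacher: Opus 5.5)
Your proof is correct and is essentially the paper's argument: the paper also writes $z^3\hA''(z) = t\tA''(t) + 2\tA'(t)$, bounds the difference from $\tA_n''$ via Lemma~\ref{lem:CAn} with $k=2$, uses Lemma~\ref{lem:CA} for the $2(\tA'(t)-a_{-1})$ term, and evaluates at $|t| = r_1$ by monotonicity. You have simply written out the details that the paper leaves as ``similar to the proof of Lemma~\ref{lem:CA}.''
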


\begin{proof}
    The proof is similar to that of Lemma~\ref{lem:CA}. We use
    $z^3 \hA''(z) = t \tA''(t) + 2 \tA'(t)$.
\end{proof}

We defined $\hA$ on the petal $\cP$. Naturally, the relation
$\hA(z) = \hA(\hf^{-n}(z)) - n$ extends $\hA$ to all $z$ that enter $\cP$ under the
iteration of $\hf^{-1}$. Computing integrals in the Euler-Maclaurin formula, we will
need the following lemma:

\begin{lemma}
    \label{lem:Ap}
    $\hA' < 0$ on $(0,\infty)$.
\end{lemma}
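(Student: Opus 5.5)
Our plan is to first show $\hA' < 0$ near $0$ (inside the petal), and then carry the sign to all of $(0,\infty)$ using the Abel equation~\eqref{eq:abel}.

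\emph{Extension.} Since $\hf$ is increasing on $(0,\infty)$ with $\hf(z) > z$, for every $z > 0$ the backward orbit $\hf^{-k}(z)$ decreases to $0$. Hence $\hf^{-k}(z) \in \cP \cap (0,\infty) = (0, 1/r]$ for $k$ large, and on a neighbourhood of $z$ we have $\hA(z) = \hA(\hf^{-k}(z)) - k$. Differentiating gives
\[
    \hA'(z) = \hA'(\hf^{-k}(z)) \cdot (\hf^{-k})'(z).
\]
On $(0,\infty)$, $\hf'(w) = (1+bw)^{\gamma-1}(1+(1+\gamma)bw) > 0$. By the chain rule, $(\hf^{-k})'(z) > 0$. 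It therefore suffices to prove $\hA' < 0$ on $(0, 1/r_1]$, say, where the estimates of the previous subsection apply.

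\emph{Near zero.} By Lemma~\ref{lem:CA}, for $z \in \cP_1 \cap (0,\infty) = (0,1/r_1]$ we have $|z^2 \hA'(z) + a_{-1}| \le C_1$. Under the standing assumption of Remark~\ref{rmk:a1_m_C1}, $C_1 < a_{-1}$, so $z^2\hA'(z) \le -a_{-1} + C_1 < 0$. Here $\hA$ is real on the real axis, because the coefficients $a_j$ are real: they solve the real system~\eqref{eq:an}, and $a_0$ is chosen real. Thus $\hA' < 0$ on $(0,1/r_1]$, and by the extension step $\hA' < 0$ on all of $(0,\infty)$.

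\emph{Main obstacle.} The delicate point is the dependence on Remark~\ref{rmk:a1_m_C1}. If one prefers an argument independent of $r_1$, we would argue directly from Corollary~\ref{cor:E} and Lemma~\ref{lem:tk}. On the real axis, $t_j \ge t + cj/2 \to \infty$, and $\hA_n'(z) = -a_{-1}/z^2 + O(1/z)$. Differentiating the series for $E_n$ termwise, with $(\hf^{-j})'(z)$ controlled through $t_j$, shows that $z^2 \hA'(z) \to -a_{-1}$ as $z \to 0^+$. So $\hA' < 0$ on some interval $(0,\delta)$, and the extension step finishes the proof. Justifying the termwise differentiation is routine by uniform convergence on compacts (Lemma~\ref{lem:A}) together with Cauchy estimates, as in Lemma~\ref{lem:CAn}. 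Hence the negativity of $\hA'$ near $0$, via Lemma~\ref{lem:CA}, is really the only substantive input.
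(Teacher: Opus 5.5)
Your proof is correct and follows the paper's argument. You get negativity on $\cP_1 \cap (0,\infty)$ from Lemma~\ref{lem:CA} and Remark~\ref{rmk:a1_m_C1}, then propagate the sign to all of $(0,\infty)$ through the Abel equation and positivity of $\hf'$ (differentiating $\hA = \hA\circ\hf^{-k} - k$ is equivalent to the paper's $\hA'(\hf(w))\hf'(w) = \hA'(w)$), and your extra remarks on reality of $\hA$ and the $r_1$-independent limit $z^2\hA'(z)\to -a_{-1}$ (which follows directly from Lemma~\ref{lem:CAn}) are valid but not needed.
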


\begin{proof}
    By Lemma~\ref{lem:CA} and Remark~\ref{rmk:a1_m_C1}, $\hA'(z) < 0$ on $\cP_1 \cap (0, \infty)$.
    Differentiating $\hA(\hf(w)) = \hA(w) - 1$ yields $\hA'(\hf(w))\hf'(w) = \hA'(w)$.
    Since $\hf'$ is positive on $(0, \infty)$, the negative sign of $\hA'$ is preserved under
    iterations of $\hf$. The result follows.
\end{proof}

\section{Sums for induced transfer operator}
\label{sec:sums}

In this section, we approximate sums of the type
\begin{equation}
    \label{eq:S}
    S(x) = \sum_{k \geq 0} \frac{\varphi(x, k)}{J_k(x)}
    ,
\end{equation}
where $\varphi$ is a sufficiently nice function and
\[
    J_k(x)
    = (f^k)' (f^{-k}(x))
    = \prod_{j = 1}^k (1 + (\gamma + 1) 2^\gamma x_j^\gamma)
    .
\]

These sums appear in the expression for the transfer operator $\cL$
of the first return map to $[1/2,1]$ with respect to Lebesgue measure:
\begin{equation}
    \label{eq:L}
    (\cL v)(x)
    = \sum_{k \geq 0} \frac{v \bigl( \frac{x_k + 1}{2} \bigr)}{2 J_k(x)}
    , \quad \text{where} \quad
    x_k = f^{-k}(x)
    .
\end{equation}

\begin{rmk}
    \label{rmk:Sz}
    Note that
    \[
        S(x) = \sum_{k=0}^N \frac{\varphi(x, k)}{J_k(x)}
        + \frac{1}{J_{N+1}(x)} \sum_{k=0}^\infty \frac{\varphi(x, N + 1 + k)}{J_k(x_{N+1})}
        .
    \]
    This is useful in reducing the computation of $S(x)$ for large $x$ to that with small $x$.
\end{rmk}

\subsection{Euler-Maclaurin formula}

We use the following abstract result which, with a minor modification, we take
from~\cite[Section~5]{VW25}:

\begin{thm}
    \label{thm:EM}
    Consider the sum $\sum_{k = 0}^\infty \psi(k)$, where for each real $s \geq 0$,
    the function $z \mapsto \psi(s + z)$
    is analytic with absolute value bounded by $C_\psi$ in the disk centered at $s$
    of radius $\varkappa_1 (\nu + s)$, where $C_\psi > 0$, $\nu > 0$ and $\varkappa_1 \in (0,1]$
    are constants independent of $s$.
    Suppose that $L \geq 1$.
    Then
    \[
        \sum_{k = 0}^\infty \psi(k)
        = \int_0^\infty \psi(s) \, ds
        + \frac{1}{2} \psi(0)
        - D_L
        + R_L
        ,
    \]
    where
    \[
        D_L
        = \sum_{j=1}^{L} \frac{B_{2j}}{(2j)!} \psi^{(2j-1)}(0)
        \quad \text{and} \quad
        |R_L|
        \leq \frac{(2L+1)! C_\psi}{L (2\pi)^{2L+1} \nu^{2L} \varkappa_1^{2 L + 1}}
        .
    \]
    Further, if $L < \pi e \nu \varkappa_1$ and $M \geq 2 L$, then
    \[
        \Bigl| D_L - \frac{1}{M} \sum_{m=1}^M c_m \psi(s_{m}) \Bigr|
        \leq \frac{\pi^2 e^2}{3}
        \cdot \frac{1}{\sqrt{2L - 1}}
        \cdot \frac{
            \nu \varkappa_1
        }{
            \bigl(\frac{2 \pi e \nu \varkappa_1}{2L-1}\bigr)^2 - 1
        }
        \cdot \frac{C_\psi}{e^M - 1}
        ,
    \]
    where $\tau = \varkappa_1 \nu / e$ and
    \[
        s_m = \tau e^{2 \pi i \frac{2 m - 1}{2 M}}
        \quad \text{and} \quad
        c_m = \sum_{\ell = 1}^L \frac{B_{2 \ell}}{2 \ell} \tau^{-(2 \ell-1)}
        e^{- 2 \pi i \frac{(2 \ell-1)(2 m - 1)}{2 M}}
        , \quad m = 1, \ldots , M
        .
    \]
\end{thm}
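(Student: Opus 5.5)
The plan is to start from the classical Euler--Maclaurin formula with remainder written via the periodic Bernoulli function, and to bound all derivatives of $\psi$ by Cauchy estimates on the disks of radius $\varkappa_1(\nu+s)$. For $L\ge1$ and any $N$,
\[
\sum_{k=0}^{N}\psi(k)=\int_0^N\psi(s)\,ds+\tfrac12\bigl(\psi(0)+\psi(N)\bigr)+\sum_{j=1}^{L}\frac{B_{2j}}{(2j)!}\bigl(\psi^{(2j-1)}(N)-\psi^{(2j-1)}(0)\bigr)-\int_0^N\frac{\tilde B_{2L+1}(s)}{(2L+1)!}\psi^{(2L+1)}(s)\,ds .
\]
Cauchy's estimate gives $|\psi^{(m)}(s)|\le m!\,C_\psi\,(\varkappa_1(\nu+s))^{-m}$. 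This is enough to pass $N\to\infty$: each derivative term at $N$ is $O(N^{-1})$, and the remainder integrand is integrable because $(\nu+s)^{-(2L+1)}$ is integrable for $L\ge1$.

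The one delicate point is $\psi(N)\to0$. Boundedness alone does not force this, so I would read the hypothesis as including convergence of $\sum\psi(k)$ and of $\int\psi$; this holds in the intended applications, where $\psi$ decays like $1/J_k$. If needed I would also use $\psi(N)-\int_N^{N+1}\psi=O(\sup|\psi'|)=O(1/N)$.

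For the remainder I would use the Fourier bound $\|\tilde B_{2L+1}\|_\infty\le 2(2L+1)!\,\zeta(2L+1)/(2\pi)^{2L+1}$. This gives
\[
|R_L|\le\frac{2\zeta(2L+1)}{(2\pi)^{2L+1}}\int_0^\infty\frac{(2L+1)!\,C_\psi}{\varkappa_1^{2L+1}(\nu+s)^{2L+1}}\,ds .
\]
The integral equals $(2L+1)!\,C_\psi/(2L\,\varkappa_1^{2L+1}\nu^{2L})$. Since $\zeta(2L+1)\le\zeta(3)<2$, hmm, we get $2\zeta/(2L)=\zeta/L$; using $\zeta(2L+1)\le 1.21$ would give a constant slightly above the stated one. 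So I will instead integrate the remainder by parts once more, or bound $\tilde B_{2L+1}$ by its Fourier series more carefully (the "minor modification" of~\cite{VW25}), to reach exactly $(2L+1)!\,C_\psi/(L(2\pi)^{2L+1}\nu^{2L}\varkappa_1^{2L+1})$. I expect this constant-matching to be routine but fiddly, and I would take the constant as in~\cite{VW25}.

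For the second claim I would represent $D_L$ by a contour integral. On $|w|=\tau=\varkappa_1\nu/e<\varkappa_1\nu$, the function $\psi$ is analytic, and
\[
\psi^{(2\ell-1)}(0)=\frac{(2\ell-1)!}{2\pi i}\oint_{|w|=\tau}\frac{\psi(w)}{w^{2\ell}}\,dw .
\]
Hence $D_L=\frac{1}{2\pi}\int_0^{2\pi}\psi(\tau e^{i\theta})\sum_\ell\frac{B_{2\ell}}{2\ell}\tau^{-(2\ell-1)}e^{-i(2\ell-1)\theta}\,d\theta$. The quadrature $\frac1M\sum c_m\psi(s_m)$ is the midpoint trapezoidal rule for this integral.

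The trapezoidal error is controlled by aliasing. Only Taylor coefficients of $\psi$ of degree $2\ell-1+jM$ with $j\ne0$ contribute, and the condition $M\ge2L$ ensures that no low-degree aliasing occurs. These Taylor coefficients are bounded by $C_\psi(\varkappa_1\nu)^{-(2\ell-1+jM)}$. Multiplying by $\tau^{2\ell-1+jM}$ gives $e^{-(2\ell-1+jM)}$, and summing over $j\ge1$ gives the factor $1/(e^M-1)$. It remains to sum over $\ell$, which involves $|B_{2\ell}|/(2\ell)\cdot\tau^{-(2\ell-1)}(\varkappa_1\nu)^{2\ell-1}e^{-(2\ell-1)}$-type terms.

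This $\ell$-sum is the main obstacle. I would bound $|B_{2\ell}|\le 2\zeta(2)(2\ell)!/(2\pi)^{2\ell}=\pi^2(2\ell)!/(3(2\pi)^{2\ell})$, which is where the factor $\pi^2/3$ comes from, and then use Stirling's bound to compare $(2\ell-1)!$ with $((2\ell-1)/e)^{2\ell-1}\sqrt{2\ell-1}\,e$. This turns the sum into a geometric series in $\bigl((2L-1)/(2\pi e\nu\varkappa_1)\bigr)^2$, summed from the top term $\ell=L$ downward. The series converges because $L<\pi e\nu\varkappa_1$, and it produces the factor $\nu\varkappa_1/((2\pi e\nu\varkappa_1/(2L-1))^2-1)$ together with $e^2/\sqrt{2L-1}$. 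I expect the exponents to line up as in~\cite{VW25}.
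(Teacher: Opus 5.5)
Your overall route is the one the paper relies on: the paper imports Theorem~\ref{thm:EM} from \cite{VW25} and only repairs the $D_L$ estimate using the Robbins bound $n!\le n^{n+1/2}e^{1-n}$. Your treatment of $D_L$ is exactly that repaired argument, and it closes.
\begin{itemize}
\item The condition $M\ge 2L$ rules out aliases with negative shift.
\item Level $\ell$ contributes at most $\frac{|B_{2\ell}|}{2\ell}(\varkappa_1\nu)^{-(2\ell-1)}\frac{C_\psi}{e^M-1}$. The factors $\tau^{-(2\ell-1)}$ and $e^{-(2\ell-1)}$ combine to $(\varkappa_1\nu)^{-(2\ell-1)}$; your displayed ``type term'' has a spurious factor $(\varkappa_1\nu)^{2\ell-1}$.
\item Use $|B_{2\ell}|\le\frac{\pi^2}{3}\frac{(2\ell)!}{(2\pi)^{2\ell}}$, Robbins, $\sqrt{2\ell-1}\le\sqrt{2L-1}$, and set $q=\frac{2L-1}{2\pi e\nu\varkappa_1}<1$. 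Then the sum is at most $\frac{\pi e}{6}\sqrt{2L-1}\,\frac{q}{1-q^2}\cdot\frac{C_\psi}{e^M-1}$, which is identically the stated right-hand side.
\end{itemize}
You are also right that the statement tacitly assumes $\sum_k\psi(k)$ converges: $\psi\equiv 1$ satisfies the literal hypotheses. Convergence of the series gives $\psi(N)\to0$ along integers, and convergence of $\int_0^N\psi$ then follows from the finite-$N$ formula.

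The gap is the constant in $|R_L|$. Your computation gives $\zeta(2L+1)$ times the stated bound, and neither remedy you suggest removes that factor. Integrating by parts once more trades $\tilde B_{2L+1},\psi^{(2L+1)}$ for $\tilde B_{2L+2},\psi^{(2L+2)}$. That yields a bound of a different shape, with $\nu^{-(2L+1)}$ and an extra boundary term $\frac{B_{2L+2}}{(2L+2)!}\psi^{(2L+1)}(0)$. ``Bounding the Fourier series more carefully'' is precisely the missing step, not a routine one.

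What you need is the sharp estimate $\sup_x|\tilde B_{2L+1}(x)|\le 2(2L+1)!/(2\pi)^{2L+1}$. Equivalently, $\sup_x\bigl|\sum_{k\ge1}\sin(2\pi kx)\,k^{-(2L+1)}\bigr|\le 1$ rather than $\le\zeta(2L+1)$. This holds for $L\ge1$:
\begin{itemize}
\item for $L=1$, $\max_{[0,1]}|B_3|=\sqrt3/36\approx0.04811<3/(2\pi^3)\approx0.04838$;
\item for large $L$, the maximum of the sine series is $1-3^{-(2L+1)}$ up to much smaller terms.
\end{itemize}
It still has to be proved or cited. With it, your integral $\frac{2}{(2\pi)^{2L+1}}\cdot\frac{(2L+1)!\,C_\psi}{\varkappa_1^{2L+1}}\cdot\frac{1}{2L\,\nu^{2L}}$ is exactly the stated bound.

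The paper does not prove this part either; it takes $R_L$ unchanged from \cite{VW25}, and its listed corrections concern only $D_L$. So deferring to the reference puts you where the paper is. As written, however, your own argument establishes the theorem only with the weaker constant.
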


\begin{rmk}
    If $\psi(\overline{z}) = \overline{\psi(z)}$, and $M$ is even, then
    \[
        \frac{1}{M} \sum_{m=1}^M c_m \psi(s_{m})
        = \frac{2}{M} \sum_{m=1}^{M / 2} \Re(c_m \psi(s_{m}))
        .
    \]
    This is useful to speed up the computation.
\end{rmk}

\begin{rmk}
    We note a few errors in~\cite{VW25} related to the derivative term $D_L$:
    \begin{enumerate}
        \item Wrong sign at $D_L$ in the Euler-Maclaurin
            formula~\cite[(33)]{VW25};
            though the sign is correct in their code.
        \item In the proof of~\cite[Proposition~5.4]{VW25},
            the factor $-1$ is missing in $\frac{B_{2 l}}{(2 l)!} = \frac{(-1)^l 2 \zeta(2 l)}{(2 \pi)^{2 l}}$,
            and the $2$ at $\zeta(\cdot)$ is later lost.
        \item In the same proof, the bound $(2 l - 1)! \leq (2 l - 1)^{2 l} e^{-(2 l - 1)}$ fails for $l \leq 3$.
    \end{enumerate}
    The bound on $D_L$ in Theorem~\ref{thm:EM} is obtained using a version of Robbins bound
    $n! \leq n^{n + 1/2} e^{1 - n}$, fixing the sign and the missing factor of $2$,
    and otherwise closely following the original proof.
\end{rmk}

\subsection{Preliminary bounds}

Now we apply Theorem~\ref{thm:EM} to the sums $S(x)$
when $x \in \cP_1^{1 / \gamma}$. We fix $x$, let $z = x^\gamma$, $w = A(x)$,
and write
\[
    S(x)
    = \sum_{k = 0}^\infty \frac{\varphi(x, k)}{J_k(x)}
    = \sum_{k = 0}^\infty \psi(k)
    ,
\]
where for complex $s$,
\begin{equation}
    \label{eq:psi}
    \psi(s)
    = \frac{\varphi(A^{-1}(w), s)}{J_s(x)}
    , \quad
    J_s (x) = \frac{\kappa(w + s)}{\kappa(w)}
    \quad \text{and} \quad
    \kappa(s)
    = \frac{1}{\gamma} A'(A^{-1}(s))
    .
\end{equation}
Assume that $\psi$ is analytic in each disk of radius $\varkappa_1(\nu + s)$ around $s$ for each $s \geq 0$.

Choose $\nu$ and $\varkappa_1$ so that
\begin{equation}
    \label{eq:nuka}
    \nu \leq (\Re(z^{-1}) - r_1)(a_{-1} + C_1)
    \quad \text{and} \quad
    \varkappa_1
    \leq \varkappa_0
    , \quad
    \varkappa_0
    = \frac{a_{-1} - C_1}{a_{-1} + C_1}
    .
\end{equation}

To control the ratio $\frac{\kappa(w)}{\kappa(w + \cdot)}$ in these disks, we use the following lemma.

\begin{lemma}
    \label{lem:disk_inclusion}
    For all $s \geq 0$ and $|\zeta| \leq \varkappa_1(\nu + s)$,
    the point $w + s + \zeta$ is contained in $\hA(\cP_1)$.
\end{lemma}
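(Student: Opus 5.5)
The plan is to work in the $t$-coordinate. Set $t = 1/z$, so $w = \tA(t)$, and let $H = \tcP_1 = \{\Re t \ge r_1\}$. Since $\nu > 0$, condition~\eqref{eq:nuka} forces $\Re t > r_1$, so $t \in H$. Write $a = a_{-1}$. By Lemma~\ref{lem:CA} and Remark~\ref{rmk:a1_m_C1}, $|\tA' - a| \le C_1 < a$ on $H$. The argument has two steps: first construct a preimage $t^*(s) \in H$ of $w+s$ lying far from $\partial H$, then show that $\tA$ covers a large disk around $w+s$.

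\emph{Step 1 (following $w+s$ backwards).} Let $t^*(s)$ solve the ODE
\[
\frac{dt^*}{ds} = \frac{1}{\tA'(t^*)}, \qquad t^*(0) = t.
\]
Along this curve $\tA(t^*(s)) = w+s$. The map $\omega \mapsto 1/\omega$ sends the disk $\{|\omega - a| \le C_1\}$ onto a disk symmetric about the real axis. Its leftmost point is $1/(a+C_1)$, so
\[
\Re \frac{1}{\tA'(u)} \ge \frac{1}{a+C_1} \qquad \text{for all } u \in H.
\]
Hence $\Re t^*(s) \ge \Re t + s/(a+C_1)$, and the curve never leaves $H$. The right-hand side of the ODE is analytic and bounded by $1/(a-C_1)$, so the solution exists for all $s \ge 0$. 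For $s$ an integer this agrees with the Abel relation, but we do not need that.

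\emph{Step 2 (lower distortion bound).} Let $d = \Re t^*(s) - r_1$, so the closed disk $B$ of radius $d$ around $t^*(s)$ lies in $H$. For $u \in B$, integrate along the segment from $t^*(s)$ to $u$:
\[
\tA(u) - \tA(t^*) = a(u - t^*) + \int_{t^*}^{u} (\tA' - a),
\]
which gives
\[
|\tA(u) - (w+s)| \ge (a - C_1)\,|u - t^*|.
\]
In particular $\tA$ takes no value of the disk $\{|\xi - (w+s)| < (a-C_1)d\}$ on $\partial B$. By the argument principle (or Rouché, comparing with the single zero of $\tA - (w+s)$ at $t^*$), every $\xi$ in that disk has a preimage in $B \subset H$. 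So $\tA(H) = \hA(\cP_1)$ contains the disk of radius $(a-C_1)d$ about $w+s$.

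\emph{Step 3 (comparing radii).} Combining Step 1 with~\eqref{eq:nuka}:
\[
(a - C_1)\, d \ge (a - C_1)\Bigl(\Re t - r_1 + \frac{s}{a+C_1}\Bigr) \ge \frac{a - C_1}{a + C_1}\,(\nu + s) = \varkappa_0(\nu + s) \ge \varkappa_1(\nu + s).
\]
Therefore $w + s + \zeta \in \hA(\cP_1)$ whenever $|\zeta| \le \varkappa_1(\nu + s)$.

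The main obstacle is Step 1: producing a preimage of $w+s$ for non-integer $s$, with a quantitative lower bound on its real part. The ODE together with the bound on $\Re(1/\tA')$ handles this. A minor extra point is checking that the solution cannot escape to infinity in finite $s$, which follows because $|dt^*/ds| \le 1/(a - C_1)$.
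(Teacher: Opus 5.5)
Your proof is correct. Step 1 is the paper's first step: the paper continues $\tA^{-1}$ along the horizontal path $w+us$, $u\in[0,1]$, using $\Re\bigl((\tA^{-1})'\bigr)\ge 1/(a_{-1}+C_1)$ to get $\Re(t')\ge\Re(t)+s/(a_{-1}+C_1)$. Your ODE $dt^*/ds=1/\tA'(t^*)$ is exactly that continuation, with existence and non-escape spelled out.

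Step 2 genuinely differs. The paper continues $\tA^{-1}$ a second time, along the segment $w+s+v\zeta$, $v\in[0,1]$. It uses $|(\tA^{-1})'|\le 1/(a_{-1}-C_1)$ to show that the real part of the preimage drops by at most $v|\zeta|/(a_{-1}-C_1)$, so it stays $\ge r_1$. You instead prove a covering statement in the $t$-plane. The lower bound $|\tA(u)-(w+s)|\ge(a_{-1}-C_1)|u-t^*|$ on the disk $B$ of radius $d=\Re t^*(s)-r_1$, combined with Rouch\'e, shows that $\tA(B)$ contains the disk of radius $(a_{-1}-C_1)d$ about $w+s$. Both routes give the same radius and the same final comparison with $\varkappa_0(\nu+s)$.

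The paper's version is shorter, but it tacitly uses two things. First, that $\tA^{-1}$ is single-valued on $\tA(\tcP_1)$; this holds by Noshiro--Warschawski, since $\Re\tA'\ge a_{-1}-C_1>0$ on the half-plane. Second, an implicit open-mapping step to continue the inverse past the supremal path parameter. Your version needs only the existence of a preimage and no injectivity at all.

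One small point to add. Rouch\'e gives you only the \emph{open} disk, but in the extremal case $|\zeta|$ can equal $(a_{-1}-C_1)d$. This happens, for example, when $s=0$, $\varkappa_1=\varkappa_0$ and equality holds in~\eqref{eq:nuka}. The case is still covered because $\tA(B)$ is compact, so it contains the closed disk as well.
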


\begin{proof}
    Let $t = z^{-1} \in \tcP_1$, so that $w = \tA(t)$.
    Let $t' = \tA^{-1}(w + s)$ and $t'' = \tA^{-1}(w + s + \zeta)$.
    We must show that $\Re(t'') \geq r_1$.

    First, consider the horizontal path
    $P_1(u) = w + u s$ parametrized by $u \in [0, 1]$.
    By~\eqref{eq:nuka}, we have $\Re(t) \geq r_1 + \frac{\nu}{a_{-1} + C_1} \geq r_1$.
    Let
    \[
        u^*
        = \sup \bigl\{
            \tau \in [0,1] : \Re(\tA^{-1}(P_1(u))) \geq r_1 \text{ for all } u \in [0, \tau]
        \bigr\}
        .
    \]
    Then $P_1(u) \in \tA(\tcP_1)$ for $u \leq u^*$.
    By Lemma~\ref{lem:CA}, $\Re\bigl((\tA^{-1})'\bigr) \geq \frac{1}{a_{-1} + C_1}$ on $\tA(\tcP_1)$.
    Then
    \[
        \Re \bigl( \tA^{-1}(P_1(u)) \bigr)
        \geq \Re(t) + \int_0^u \frac{s}{a_{-1} + C_1} \, d\xi
        = \Re(t) + \frac{u s}{a_{-1} + C_1}
        .
    \]
    Since this is strictly larger than $r_1$ for all $u \leq u^*$, we must have $u^* = 1$.
    Evaluating at $u = 1$ gives $\Re(t') \geq \Re(t) + \frac{s}{a_{-1} + C_1}$.

    Next we apply a similar treatment to the straight-line path
    $P_2(v) = w + s + v\zeta$, parametrized by $v \in [0, 1]$.
    Let
    \[
        v^*
        = \sup \bigl\{
            \tau \in [0,1] : \Re(\tA^{-1}(P_2(v))) \geq r_1 \text{ for all } v \in [0, \tau]
        \bigr\}
        .
    \]
    For $v < v^*$, $P_2(v) \in \tA(\tcP_1)$, and by Lemma~\ref{lem:CA},
    $|(\tA^{-1})'| \leq (a_{-1} - C_1)^{-1}$ on $\tA(\tcP_1)$.
    Consequently,
    \[
        \Re \bigl( \tA^{-1}(P_2(v)) \bigr)
        \geq \Re(t') - \frac{v |\zeta|}{a_{-1} - C_1}
        \geq \Re(t) + \frac{s}{a_{-1} + C_1} - \frac{v \varkappa_1(\nu + s)}{a_{-1} - C_1}
        .
    \]
    Recall that $\Re(t) \geq r_1 + \frac{\nu}{a_{-1} + C_1}$. Then for all $v < v^*$,
    \[
        \Re \bigl( \tA^{-1}(P_2(v)) \bigr)
        \geq r_1 + (\nu + s) \Bigl[
            \frac{1}{a_{-1} + C_1} - \frac{v \varkappa_1}{a_{-1} - C_1}
        \Bigr]
        .
    \]
    By the definition of $\varkappa_1$, the term in the brackets is positive as long as $v < 1$.
    This ensures $v^* = 1$. Thus $\Re(t'') \geq r_1$, completing the proof.
\end{proof}

\begin{figure}[htbp]
    \centering
    \begin{tikzpicture}[
        scale=0.95,
        every node/.style={text=black, font=\small},
        dot/.style={circle, fill, inner sep=1.2pt}
    ]
        \colorlet{pair1}{red!80!black}     
        \colorlet{pair2}{blue!80!black}    
        \colorlet{pair3}{green!60!black}   
        \colorlet{pair4}{orange!90!black}  

        \begin{scope}[shift={(0,0)}]
            \draw[->, thick] (-1,0) -- (4.5,0) node[right] {$\Re$};
            \draw[->, thick] (0,-1.5) -- (0,3.5) node[above] {$\Im$};

            \coordinate (Ar1) at (0.6, 0);
            \draw[dashed] plot[domain=-1.5:3.2, smooth, samples=50, variable=\y] ({0.6 + 0.35*ln(1 + abs(\y))}, {\y});
            \node[dot, fill=pair1, label={[text=pair1, font=\small]below right:$\tilde{A}(r_1)$}] at (Ar1) {};

            \coordinate (apex) at (1.2, 1.4);
            \coordinate (w) at (2.0, 1.4);
            \coordinate (ws) at (3.5, 1.4);
            \coordinate (zeta_pt) at ($(ws) + (50:1.2)$);



            \draw[->, thick] (w) -- (ws) node[midway, below] {$s$};
            \draw[->, thick] (ws) -- (zeta_pt) node[midway, right=1pt] {$\zeta$};

            \draw[dashed, thick] (ws) circle (1.2);

            \node[dot, fill=pair2, label={[text=pair2, font=\small]below left:$w$}] at (w) {};
            \node[dot, fill=pair3, label={[text=pair3, font=\small]below right:$w+s$}] at (ws) {};
            \node[dot, fill=pair4, label={[text=pair4, font=\small]right:$w+s+\zeta$}] at (zeta_pt) {};

            \node[font=\normalsize] at (1.75, -2.2) {Abel-plane};
        \end{scope}

        \draw[->, thick, -stealth] (5.2, 1.4) -- (6.5, 1.4) node[midway, above, font=\small] {$\tilde{A}^{-1}$};

        \begin{scope}[shift={(7.8,0)}]
            \draw[->, thick] (-1,0) -- (4.5,0) node[right] {$\Re$};
            \draw[->, thick] (0,-1.5) -- (0,3.5) node[above] {$\Im$};

            \coordinate (r1) at (0.8, 0);
            \draw[dashed] (r1) -- (0.8, 3.2);
            \draw[dashed] (0.8, -1.5) -- (r1);
            \node[dot, fill=pair1, label={[text=pair1, font=\small]below right:$r_1$}] at (r1) {};

            \coordinate (t) at (1.6, 0.8);
            \coordinate (t_ws) at (3.2, 1.6);
            \coordinate (t_zeta) at ($(t_ws) + (40:1.3)$);

            \draw[thick] (t) -- (t_ws);
            \draw[thick] (t_ws) -- (t_zeta);

            \node[dot, fill=pair2, label={[text=pair2, font=\small]below left:$t$}] at (t) {};
            \node[dot, fill=pair3, label={[text=pair3, font=\small]below right:$\tilde{A}^{-1}(w+s)$}] at (t_ws) {};
            \node[dot, fill=pair4, label={[text=pair4, font=\small]above:{$t' = \tilde{A}^{-1}(w+s+\zeta)$}}] at (t_zeta) {};

            \node[font=\normalsize] at (1.75, -2.2) {$t$-plane};
        \end{scope}

    \end{tikzpicture}
    \caption{Mapping of the integration domain via $\tilde{A}^{-1}$.
    The dashed curve represents the boundary of $\tA(\tcP_1)$.
    The components mapping onto the $t$-plane illustrate the progressive boundary
    constraints established in Lemma \ref{lem:disk_inclusion}.
    }
    \label{fig:disk_inclusion}
\end{figure}

Note that $\psi(0) = \varphi(x, 0)$, and with the change of variables
$u = A^{-1}(w + s)$, the integral term simplifies:
\begin{equation}
    \label{eq:EMint}
    \int_0^\infty \psi(s) \, ds
    = - A'(x) \int_0^x \varphi(x, A(u) - A(x)) \, du
    .
\end{equation}

To apply Theorem~\ref{thm:EM}, it remains to estimate $C_\psi$.

\begin{lemma}
    \label{lem:kk}
    Let $t = z^{-1}$.
    For all $s \geq 0$ and $|\zeta| \leq \varkappa_1(\nu + s)$,
    \begin{equation}
        \label{eq:kk}
        \Bigl| \frac{\kappa(w)}{\kappa(w+s+\zeta)} \Bigr|
        \leq \frac{a_{-1} + C_1}{a_{-1} - C_1} \,
        \Bigl[
            \frac{|t|}{|t + a_{-1}' s| - K s - C}
        \Bigr]^{1 + 1 / \gamma}
        ,
    \end{equation}
    where $C = \frac{\varkappa_1 \nu}{a_{-1} - C_1}$ and
    $K = C_1' + \frac{\varkappa_1}{a_{-1} - C_1}$.
\end{lemma}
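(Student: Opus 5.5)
The plan is to rewrite $\kappa$ in the $t$-variable, so that the ratio splits into a ratio of derivatives of $\tA$ times a power of $t/t''$. Each factor is then controlled by Lemma~\ref{lem:CA} and Lemma~\ref{lem:disk_inclusion}.

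\emph{Step 1: formula for $\kappa$.} Since $A(x)=\hA(x^\gamma)$, we have $A'(x)=\gamma \hA'(z)\, x^{\gamma-1}=\gamma\hA'(z)\, z^{1-1/\gamma}$ with $z=x^\gamma$. Moreover $\hA'(z)=-t^2\tA'(t)$ for $t=z^{-1}$. Hence for any $\omega$ in the range where $\tA$ is invertible,
\[
    \kappa(\omega) = -\tA'(t_\omega)\, t_\omega^{1+1/\gamma},
    \qquad t_\omega=\tA^{-1}(\omega).
\]
Put $t''=\tA^{-1}(w+s+\zeta)$. This is well defined and lies in $\tcP_1$ by Lemma~\ref{lem:disk_inclusion}. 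Since $t\in\tcP_1$ as well, we get
\[
    \Bigl|\frac{\kappa(w)}{\kappa(w+s+\zeta)}\Bigr|
    = \frac{|\tA'(t)|}{|\tA'(t'')|}\,\Bigl(\frac{|t|}{|t''|}\Bigr)^{1+1/\gamma}.
\]
By Lemma~\ref{lem:CA}, $|\tA'|\le a_{-1}+C_1$ and $|\tA'|\ge a_{-1}-C_1>0$ on $\tcP_1$ (the latter uses Remark~\ref{rmk:a1_m_C1}). This gives the prefactor $\frac{a_{-1}+C_1}{a_{-1}-C_1}$. Taking moduli removes any concern about the branch of $t^{1+1/\gamma}$, because $\Re t>0$.

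\emph{Step 2: lower bound on $|t''|$.} I follow the two-segment path from the proof of Lemma~\ref{lem:disk_inclusion}: first $P_1(u)=w+us$, then $P_2(v)=w+s+v\zeta$. That proof shows both segments stay inside $\tA(\tcP_1)$, so we may write
\[
    t''-t = s\int_0^1 (\tA^{-1})'(w+us)\,du + \zeta\int_0^1(\tA^{-1})'(w+s+v\zeta)\,dv .
\]
\begin{itemize}
    \item In the first integral, write $(\tA^{-1})'=a_{-1}'+\bigl((\tA^{-1})'-a_{-1}'\bigr)$. By Lemma~\ref{lem:CA}, the first integral equals $a_{-1}'s$ up to an error of modulus at most $C_1' s$.
    \item The second integral is bounded in modulus by $|\zeta|/(a_{-1}-C_1)$, using $|(\tA^{-1})'|\le (a_{-1}-C_1)^{-1}$ as in Lemma~\ref{lem:disk_inclusion}. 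Since $|\zeta|\le\varkappa_1(\nu+s)$, this is at most $C+\frac{\varkappa_1 s}{a_{-1}-C_1}$.
\end{itemize}
The reverse triangle inequality then gives
\[
    |t''|\ge |t+a_{-1}'s| - C_1's - \frac{\varkappa_1 s}{a_{-1}-C_1} - C = |t+a_{-1}'s|-Ks-C .
\]

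\emph{Step 3: conclusion.} Substituting this lower bound into Step~1 gives \eqref{eq:kk}. If the denominator is nonpositive, I read the right-hand side as $+\infty$, so the bound holds trivially. No real obstacle is expected here. The only delicate point is justifying that the whole integration path lies in $\tA(\tcP_1)$, where the derivative bounds of Lemma~\ref{lem:CA} apply. That containment is exactly what the proof of Lemma~\ref{lem:disk_inclusion} establishes, so I would cite it explicitly.
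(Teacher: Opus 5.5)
Your proof is correct and matches the paper's argument. You write $\kappa(w)=-\tA'(t)\,t^{1+1/\gamma}$, bound the derivative ratio by $\frac{a_{-1}+C_1}{a_{-1}-C_1}$ via Lemma~\ref{lem:CA}, and bound $|t''|\ge|t+a_{-1}'s|-Ks-C$; the paper only asserts that last bound ``by Lemma~\ref{lem:CA}'', whereas you justify it by integrating $(\tA^{-1})'$ along the two segments from the proof of Lemma~\ref{lem:disk_inclusion}.

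Your convention for a nonpositive denominator is harmless but unnecessary. By \eqref{eq:nuka}, $a_{-1}'-K=\frac{1}{a_{-1}+C_1}-\frac{\varkappa_1}{a_{-1}-C_1}\ge 0$ and $\Re(t)-C\ge r_1>0$, so the denominator is always positive.
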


\begin{proof}
    Write $\kappa(w) = -\tA'(t) t^{1 + 1/\gamma}$. With $t' = \tA^{-1}(w + s + \zeta)$,
    using Lemma~\ref{lem:CA},
    \[
        \Bigl| \frac{\kappa(w)}{\kappa(w+s+\zeta)} \Bigr|
        = \biggl| \frac{\tA'(t)}{\tA'(t')} \biggr|
        \, \Bigl| \frac{t}{t'} \Bigr|^{1 + 1 / \gamma}
        \leq \frac{a_{-1} + C_1}{a_{-1} - C_1}
        \, \Bigl| \frac{t}{t'} \Bigr|^{1 + 1 / \gamma}
        ,
    \]
    Also by Lemma~\ref{lem:CA}, $|t'| \ge |t + a_{-1}' s| - K s - C$,
    and the result follows.
\end{proof}

\subsection{Bounded observables}

In this section we prepare an application of Theorem~\ref{thm:EM}
when $\varphi$ is bounded, such as when $\varphi(x, k)$ is a
Chebyshev polynomial in $x_k = f^{-k}(x)$.
We assume that
\[
    \sup \{ |\varphi(A^{-1}(w), s + \zeta)| : s \geq 0, |\zeta|
    \leq \varkappa_1(s + \nu) \} \leq \eps
\]
with some $\eps \geq 0$.
Then we can choose any
\begin{align*}
    C_\psi
    \geq \eps \sup_{\substack{s \geq 0 \\ |\zeta| \leq \varkappa_1(\nu + s)}}
    \Bigl|
        \frac{\kappa(w)}{\kappa(w+s+\zeta)}
    \Bigr|
    ,
\end{align*}
and the bound on the first factor is given by the following lemma.

\begin{lemma}
    \label{lem:kappa}
    Let $t = z^{-1} = \tA^{-1}(w)$ and $C = \frac{\varkappa_1 \nu}{a_{-1} - C_1}$.
    For $\Re(t) > r_1+C$ and $|\zeta| \leq \varkappa_1(\nu + s)$,
    \[
        \Bigl| \frac{\kappa(w)}{\kappa(w+s+\zeta)} \Bigr|
        \leq \frac{a_{-1} + C_1}{a_{-1} - C_1}
        \max \Bigl\{
            \frac{\Re(t)}{\Re(t) - C},
            \frac{1}{R}
        \Bigr\}^{1 + 1/\gamma}
        ,
    \]
    where $R = \sqrt{1 - \bigl(q + \varkappa_1(1 + q)\bigr)^2}$
    with $q = C_1 / a_{-1}$. If $t$ is real, a simpler bound holds:
    \[
        \Bigl| \frac{\kappa(w)}{\kappa(w+s+\zeta)} \Bigr|
        \leq \frac{a_{-1} + C_1}{a_{-1} - C_1}
        \Bigl[ \frac{t}{t - C} \Bigr]^{1 + 1/\gamma}
        ,
    \]
\end{lemma}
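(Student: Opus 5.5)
The plan is to reduce the lemma to a lower bound on $|t'|$, where $t' = \tA^{-1}(w+s+\zeta)$. The proof of Lemma~\ref{lem:kk} already gives
\[
    \Bigl| \frac{\kappa(w)}{\kappa(w+s+\zeta)} \Bigr|
    \leq \frac{a_{-1} + C_1}{a_{-1} - C_1} \, \Bigl| \frac{t}{t'} \Bigr|^{1 + 1/\gamma},
\]
using only $|\tA'| \in [a_{-1}-C_1, a_{-1}+C_1]$ on $\tcP_1$. Lemma~\ref{lem:disk_inclusion} guarantees that the whole path $w \to w+s \to w+s+\zeta$ stays in $\tA(\tcP_1)$. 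So it suffices to prove
\[
    |t'| \geq \min\Bigl\{ \tfrac{\Re(t) - C}{\Re(t)} |t|, \; R |t| \Bigr\}.
\]
I would integrate $(\tA^{-1})'$ along that same two-segment path, exactly as in Lemma~\ref{lem:disk_inclusion}, to write $t' = t + X + Y$. Here $X = \int_0^s (\tA^{-1})'(w+u)\,du$ comes from the horizontal segment and $Y = \int_0^1 (\tA^{-1})'(w+s+v\zeta)\,\zeta\,dv$ from the $\zeta$-segment.

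The key geometric step is to locate these increments. By Lemma~\ref{lem:CA}, $(\tA^{-1})' = 1/(a_{-1}+\delta)$ with $|\delta| \leq C_1$. This lies in the disk with centre $a_{-1}'$ and radius $C_1' = q\,a_{-1}'$, so $X/s$ lies in that disk as well, since the disk is convex. For $Y$, I split $|\zeta| \leq \varkappa_1 s + \varkappa_1\nu$. The part proportional to $s$ has size at most $\varkappa_1 s/(a_{-1}-C_1)$. One checks that $1/(a_{-1}-C_1) = a_{-1}'(1+q)$, so this part equals $s$ times a vector of modulus at most $a_{-1}'\varkappa_1(1+q)$. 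The remaining part, call it $c$, satisfies $|c| \leq C$. Hence
\[
    t' = t + c + s v, \qquad |v - a_{-1}'| \leq a_{-1}'\bigl(q + \varkappa_1(1+q)\bigr) =: a_{-1}' \sin\theta_0 .
\]
Thus $v$ lies in a sector of half-angle $\theta_0$ about the positive real axis, and $R = \cos\theta_0$. (When $\varkappa_1 \leq \varkappa_0$ one has $\sin\theta_0 < 1$.)

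Next I would minimise $|t + c + s v|$ over $s \geq 0$ and $v$ in the sector. If the angle between $t$ and $v$ is at most $\pi/2$, then $|t + sv| \geq |t|$ for all $s \geq 0$. Otherwise the minimum over $s$ is the distance from $0$ to the ray $t + \R_+ v$. Since $\Re t > 0$ and $|\arg v| \leq \theta_0$, that angle is at most $\pi/2 + \theta_0$, so this distance is at least $|t|\cos\theta_0 = R|t|$.

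The main obstacle is combining this with the shift $c$ so as to obtain a max of the two ratios rather than their product. My first attempt is a two-case argument. When the minimising $s$ is $0$ (the acute case), I would use the real-part estimate from Lemma~\ref{lem:disk_inclusion}: since $\varkappa_1 \leq \varkappa_0$, the growth $\Re(X) \geq s/(a_{-1}+C_1)$ absorbs the $s$-part of $Y$. This gives $|t'| \geq \Re t' \geq \Re t - C \geq |t|\,(\Re t - C)/\Re t$. In the obtuse case I would apply the sector argument to the shifted base point $t + c$, which has $\Re(t+c) \geq \Re t - C > r_1 > 0$. I would then check that the distance to the ray is still at least $R|t|$, using that $c$ only moves the base point within the half-plane. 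If this fails, I would fall back on tracking the perpendicular component of $t+c$ directly.

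Finally, for real $t$ the sector argument is unnecessary. The real-part estimate alone gives $|t'| \geq \Re t' \geq t - C$, hence the bound $\bigl[t/(t-C)\bigr]^{1+1/\gamma}$.
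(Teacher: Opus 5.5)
Your reduction matches the paper's. The decomposition $t' = t + c + sv$ with $|c| \le C$ and $|v - a_{-1}'| \le a_{-1}'\sin\theta_0 = K$ carries the same information as the bound $|t'| \ge |t + a_{-1}'s| - Ks - C$ from Lemma~\ref{lem:kk}. Your ray-distance computation and your treatment of real $t$ are correct.

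The gap is at the step you yourself call the main obstacle, and neither half of your case split works as written.

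\textbf{Acute case.} The chain $\Re t - C \ge |t|(\Re t - C)/\Re t$ is reversed whenever $\Im t \ne 0$, because $|t| \ge \Re t$. The real-part estimate therefore only gives $|t'|/|t| \ge (\Re t - C)/|t|$, which can be arbitrarily small. The repair is easy: $|t'| \ge |t+sv| - C \ge |t| - C \ge |t|(1 - C/\Re t)$.

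\textbf{Obtuse case.} Applying the sector argument at the base point $t+c$ gives only $|t'| \ge R|t+c| \ge R(|t| - C)$. That is the product $R(1 - C/|t|)$ you were trying to avoid. The hoped-for claim that the shifted ray stays at distance $\ge R|t|$ from $0$ is false. At the edge of the obtuse region, $|\Im t| = \Re t \cot\theta_0$, the unshifted distance is exactly $|t|$. A worst-case shift leaves $|t| - C$, which is below $R|t|$ as soon as $C > (1-R)|t|$, and the hypotheses do not exclude this. So a split of the form ``obtuse $\Rightarrow R$, acute $\Rightarrow 1 - C/\Re t$'' cannot succeed: near the boundary of the obtuse region you must use the other branch of the maximum.

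The missing idea is to keep the exact worst-case distance instead of weakening it to $R|t|$, and to split according to $t$ rather than $v$. Write $u = \Re t$ and $y = |\Im t|$.
\begin{itemize}
\item If $y \le u\cot\theta_0$, every admissible $v$ makes an acute angle with $t$, so $|t'| \ge |t| - C \ge |t|(1 - C/u)$.
\item If $y > u\cot\theta_0$, then $\min_{s \ge 0,\,v} |t+sv| = |t|\sin(|\arg t| + \theta_0) = y\cos\theta_0 + u\sin\theta_0$. Hence
\[
    \frac{|t'|}{|t|} \ge F(y) := \frac{y\cos\theta_0 + u\sin\theta_0 - C}{\sqrt{u^2+y^2}} .
\]
\end{itemize}
The summand $u\sin\theta_0$ is the slack that absorbs $C$. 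The numerator of $F'(y)$ is $u^2\cos\theta_0 + y(C - u\sin\theta_0)$, which is linear in $y$. So $F$ has no interior minimum on $[u\cot\theta_0, \infty)$. Its infimum is therefore the smaller of the endpoint value $F(u\cot\theta_0) = 1 - C\sin\theta_0/u \ge 1 - C/u$ and the limit $\cos\theta_0 = R$ as $y \to \infty$.

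Together the two cases give $|t/t'| \le \max\{\Re t/(\Re t - C),\, 1/R\}$. This is the paper's argument, with its angle $\alpha = \pi/2 - \theta_0$.
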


\begin{proof}
    We apply Lemma~\ref{lem:kk}.
    Let $t = u + iv$. Then $|t + a_{-1}' s| = \sqrt{v^2 + (u + a_{-1}' s)^2}$ and
    \[
        \frac{|t'|}{|t|}
        \geq \frac{\sqrt{v^2 + (u + a_{-1}' s)^2} - K s - C}{\sqrt{u^2 + v^2}}
        .
    \]
    Let $\alpha \in (0, \pi/2)$ such that $\cos \alpha = K / a_{-1}'$ and
    $\sin \alpha = R$. We minimize over $s \ge 0$ for fixed $u, v$:
    \[
        \frac{|t'|}{|t|}
        \geq \begin{cases}
            1 - C / \sqrt{u^2 + v^2}
            , & |v| \leq u \tan \alpha , \\
            (|v| \sin \alpha + u \cos \alpha - C) / \sqrt{u^2 + v^2}
            , & |v| > u \tan \alpha .
        \end{cases}
    \]
    With $v = 0$, the result for real $t$ is immediate. For complex $t$ we
    minimize over $v$. For $|v| \leq u \tan \alpha$, the ratio is minimized at
    $v = 0$, giving $1 - C / u$. Otherwise, the ratio has no interior minimum,
    so its infimum occurs at the boundaries. With $|v| = u \tan \alpha$
    the value is $1 - (C \cos \alpha)/u$, and as $|v| \to \infty$,
    the limit is $\sin \alpha = R$. The desired result for complex $t$ follows.
\end{proof}

\begin{cor}
    \label{cor:kappa}
    In the setup of Lemma~\ref{lem:kappa}, choose
    \[
        r_*
        \geq \max \Bigl\{
            r_1 + \frac{\nu}{a_{-1}},
            \frac{C}{1 - R}
        \Bigr\}
        .
    \]
    Then for $\Re(z^{-1}) \geq r_*$,
    \[
        C_\psi
        \leq \frac{a_{-1} + C_1}{a_{-1} - C_1}
        R^{-(1 + 1/\gamma)}
        \sup_{z \in \cP_1} |\varphi(z)|
        .
    \]
\end{cor}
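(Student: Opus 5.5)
The corollary should follow directly from Lemma~\ref{lem:kappa}, once we check that its hypotheses hold and that the maximum there is controlled by $R^{-1}$. The constant $C_\psi$ is defined as $\eps$ times the supremum of $|\kappa(w)/\kappa(w+s+\zeta)|$ over $s \geq 0$ and $|\zeta| \leq \varkappa_1(\nu+s)$. We bound the supremum using Lemma~\ref{lem:kappa} and take $\eps = \sup_{z\in\cP_1}|\varphi(z)|$. This choice of $\eps$ is justified because, by Lemma~\ref{lem:disk_inclusion}, every point $w+s+\zeta$ lies in $\hA(\cP_1)$. Hence the corresponding preimage $x_{s+\zeta} = A^{-1}(w+s+\zeta)$ has $z$-coordinate in $\cP_1$, and $\varphi$ is evaluated only there.

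\textbf{Step 1: hypotheses.} Let $t = z^{-1}$ with $\Re(t) \geq r_*$.
\begin{itemize}
\item We need $\Re(t) > r_1 + C$. Since $R<1$, we have $C/(1-R) > C$, but this alone does not give $r_* > r_1 + C$. I would compare instead with $r_1 + \nu/a_{-1}$. We have $C = \varkappa_1\nu/(a_{-1}-C_1)$ and, by \eqref{eq:nuka}, $\varkappa_1 \le \varkappa_0 = (a_{-1}-C_1)/(a_{-1}+C_1)$. Therefore $C \le \nu/(a_{-1}+C_1) < \nu/a_{-1}$, which gives $r_* > r_1 + C$.
\item The choice of $\nu$ in \eqref{eq:nuka} requires $\nu \le (\Re(t)-r_1)(a_{-1}+C_1)$. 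This is compatible with $\Re(t) \ge r_1 + \nu/a_{-1}$, and I take it as part of the standing setup.
\end{itemize}

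\textbf{Step 2: the maximum.} We show that $\Re(t)/(\Re(t)-C) \le 1/R$. This is equivalent to $R\,\Re(t) \le \Re(t) - C$, that is, $\Re(t)(1-R) \ge C$, which is exactly the second constraint $\Re(t) \ge C/(1-R)$ in the definition of $r_*$. So the maximum equals $1/R$. Since the exponent $1+1/\gamma$ is positive, the power preserves the inequality, giving
\[
\Bigl|\frac{\kappa(w)}{\kappa(w+s+\zeta)}\Bigr| \le \frac{a_{-1}+C_1}{a_{-1}-C_1}\,R^{-(1+1/\gamma)}.
\]

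\textbf{Step 3: conclusion.} Multiplying by $\eps = \sup_{\cP_1}|\varphi|$ gives the stated bound on $C_\psi$.

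The only delicate point is the bookkeeping in Step 1. We must confirm that $R$ is real and positive, which needs $q + \varkappa_1(1+q) < 1$. Since $\varkappa_1 \le \varkappa_0 = (1-q)/(1+q)$, we get $q + \varkappa_1(1+q) \le q + (1-q) = 1$. I expect equality at $\varkappa_1 = \varkappa_0$ to be the main obstacle: there $R = 0$ and the bound degenerates. I would therefore note that strict inequality $\varkappa_1 < \varkappa_0$ is implicitly assumed, or else that the bound is read as trivial in that case.
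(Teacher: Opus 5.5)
Your proposal is correct and matches the argument the paper leaves implicit (the corollary is stated without proof as an immediate consequence of Lemma~\ref{lem:kappa}). The first term of $r_*$ gives $\Re(t) > r_1 + C$, via $C \le \nu/(a_{-1}+C_1) < \nu/a_{-1}$, and the second term gives $\Re(t)/(\Re(t)-C) \le 1/R$, so the maximum is $1/R$. One small sharpening: the condition on $\nu$ in \eqref{eq:nuka} is actually implied by $\Re(t) \ge r_1 + \nu/a_{-1}$, not merely compatible with it, since $\nu \le (\Re(t)-r_1)a_{-1} \le (\Re(t)-r_1)(a_{-1}+C_1)$.
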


\begin{rmk}
    A natural choice of $\varkappa_1$ is
    \[
        \varkappa_1
        = \varkappa_0
        \frac{2 (a_{-1} r_1 + \nu) \nu}{\nu^2 + (a_{-1} r_1 + \nu)^2}
        ,
    \]
    designed to make the two terms in the expression for $r_*$ approximately
    equal when $C_1$ is small.
\end{rmk}

\begin{prop}
    \label{prop:sum-small}
    \[
        |S(z)|
        \leq \eps \gamma |\hA'(z) z| + \frac{\eps}{2}
        + C_\psi \Bigl[
            \sum_{j=1}^L \frac{|B_{2j}|}{2j (\nu \varkappa_1)^{2j-1}}
            + \frac{(2L+1)!}{L (2\pi)^{2L+1} \nu^{2L} \varkappa_1^{2L+1}}
        \Bigr]
        .
    \]
    Let $C, R$ be as in Lemma~\ref{lem:kappa}.
    Then for real $z$ with
    $z^{-1} > \max \bigl\{ r_1 + \frac{\nu}{a_{-1} + C_1}, C \bigr\}$,
    \[
        C_\psi
        \leq \eps \frac{a_{-1} + C_1}{a_{-1} - C_1}
        \Bigl[ \frac{z^{-1}}{z^{-1} - C} \Bigr]^{1 + 1/\gamma}
        .
    \]
    With $r_*$ as in Corollary~\ref{cor:kappa}, for complex $z$
    with $\Re(z^{-1}) \geq r_*$,
    \[
        C_\psi
        \leq \eps \frac{a_{-1} + C_1}{a_{-1} - C_1}
        R^{-(1 + 1/\gamma)}
        .
    \]
\end{prop}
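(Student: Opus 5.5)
We apply Theorem~\ref{thm:EM} to $S(z)=\sum_{k\ge0}\psi(k)$ with $\psi$ as in~\eqref{eq:psi}. The hypotheses are analyticity and the bound $|\psi|\le C_\psi$ on the disks of radius $\varkappa_1(\nu+s)$ around each $s\ge 0$; we take these as given, since Lemma~\ref{lem:disk_inclusion} places the disks inside $\hA(\cP_1)$. Theorem~\ref{thm:EM} then gives
\[
    S=\int_0^\infty\psi+\tfrac12\psi(0)-D_L+R_L ,
\]
and we bound each of the four terms separately.

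\emph{Boundary term.} Since $\psi(0)=\varphi(x,0)$, we have $|\tfrac12\psi(0)|\le \eps/2$.

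\emph{Integral term.} By~\eqref{eq:EMint},
\[
    \Bigl|\int_0^\infty\psi\Bigr|\le \eps\,|A'(x)|\,x .
\]
Since $A(x)=\hA(x^\gamma)$, we get $A'(x)x=\gamma\hA'(z)z$. This yields $\eps\gamma|\hA'(z)z|$.

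\emph{Derivative term.} Apply Cauchy's estimate for $\psi^{(2j-1)}(0)$ on the disk of radius $\nu\varkappa_1$ around $0$:
\[
    |\psi^{(2j-1)}(0)|\le (2j-1)!\,C_\psi(\nu\varkappa_1)^{-(2j-1)} .
\]
Hence
\[
    \Bigl|\frac{B_{2j}}{(2j)!}\psi^{(2j-1)}(0)\Bigr|\le \frac{|B_{2j}|\,C_\psi}{2j(\nu\varkappa_1)^{2j-1}} ,
\]
and summing over $j=1,\dots,L$ gives exactly the first sum in the bracket.

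\emph{Remainder.} $R_L$ is bounded directly by Theorem~\ref{thm:EM}. Collecting the four pieces gives the first displayed inequality.

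For the bounds on $C_\psi$, note that $|\psi(s+\zeta)|\le \eps\,|\kappa(w)/\kappa(w+s+\zeta)|$ by the boundedness assumption on $\varphi$. So $C_\psi$ may be taken as $\eps$ times the supremum of this ratio over the disks. For real $z$, the real case of Lemma~\ref{lem:kappa} gives $\bigl(t/(t-C)\bigr)^{1+1/\gamma}$ with $t=z^{-1}$. We need to check the hypotheses. The condition $t>C$ makes the bound meaningful. The condition $t\ge r_1+\nu/(a_{-1}+C_1)$ is what~\eqref{eq:nuka} requires, and it makes Lemmas~\ref{lem:disk_inclusion} and~\ref{lem:kk} applicable.

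For complex $z$ with $\Re(z^{-1})\ge r_*$, use Corollary~\ref{cor:kappa}. Since $\Re t\ge C/(1-R)$, we have $\Re t/(\Re t-C)\le 1/R$, so the maximum in Lemma~\ref{lem:kappa} equals $1/R$. This gives the factor $R^{-(1+1/\gamma)}$.

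The main obstacle is bookkeeping of hypotheses rather than analysis. One must make sure the chosen $\nu$ and $\varkappa_1$ satisfy~\eqref{eq:nuka} at the given $z$. One must also check that $\Re t>r_1+C$, as Lemma~\ref{lem:kappa} needs; this follows from $r_*\ge r_1+\nu/a_{-1}$ together with $C/(1-R)$ dominating, or it can be assumed in the real case. Finally, the Cauchy estimate at $s=0$ uses radius exactly $\varkappa_1\nu$, which is what the stated form requires.
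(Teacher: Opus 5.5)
Your proposal is correct and follows essentially the same route as the paper: apply Theorem~\ref{thm:EM}, then bound the integral term by $\eps\gamma|\hA'(z)z|$, the boundary term by $\eps/2$, $D_L$ by Cauchy's estimate on the disk of radius $\nu\varkappa_1$, and $R_L$ by the stated remainder bound, with $C_\psi$ controlled by Lemma~\ref{lem:kappa} and Corollary~\ref{cor:kappa}. One small bookkeeping correction: in both the real and complex cases, the hypothesis $\Re(z^{-1}) > r_1 + C$ of Lemma~\ref{lem:kappa} follows directly from $\varkappa_1 \le \varkappa_0$, which gives $C = \varkappa_1\nu/(a_{-1}-C_1) \le \nu/(a_{-1}+C_1)$, so it need not be assumed and does not depend on $C/(1-R)$ dominating.
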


\begin{proof}
    Recall the Euler-Maclaurin formula.
    First we bound the integral term. Since $|\varphi(u)| \leq \eps$,
    using equation~\eqref{eq:EMint} and $A'(x) = \gamma x^{\gamma-1} \hA'(z)$,
    \[
        \Bigl| - A'(x) \int_0^x \varphi(x, A(u) - A(x)) \, du \Bigr|
        \leq \eps \gamma |\hA'(z) z|
        .
    \]
    For the boundary term, $\psi(0) = \varphi(x, 0)$, so $\frac{1}{2} |\psi(0)| \leq \frac{\eps}{2}$.

    To bound the derivatives term $D_L = \sum_{j=1}^L \frac{B_{2j}}{(2j)!} \psi^{(2j-1)}(0)$,
    we apply Cauchy's estimate: $\psi(s)$ is bounded by $C_\psi$ and analytic in the disk
    of radius $\varkappa_1 \nu$ around $0$. Thus
    \[
        |\psi^{(2j-1)}(0)|
        \leq \frac{(2j-1)! C_\psi}{(\nu \varkappa_1)^{2j-1}}
        \quad \text{and} \quad
        |D_L|
        \leq \sum_{j=1}^L \frac{|B_{2j}|}{2j} \frac{C_\psi}{(\nu \varkappa_1)^{2j-1}}
        .
    \]
    Combining this with the bound for the remainder term $R_L$,
    and the bounds on $C_\psi$ by Lemma~\ref{lem:kappa}
    and Corollary~\ref{cor:kappa}, completes the proof.
\end{proof}

\subsection{Particular observables}
\label{sec:observables}

In this section we apply Theorem~\ref{thm:EM} to a few particular
choices of $\varphi(x, k)$. We show how to bound $C_\psi$ and
compute the integral $\int_0^\infty \psi(s) \, ds$.

\subsubsection{Chebyshev polynomials}
\label{sec:comp-cheb}

Let $\varphi(x, k) = \varphi_m(x_k)$ with $x_k = f^{-k}(x)$ and
\begin{equation}
    \label{eq:phi-n-cheb}
    \varphi_m (x) = T_m(2 x - 1)
    ,
\end{equation}
where $T_m$ is the $m$-th Chebyshev polynomial.
In the complex plane,
\[
    \varphi(x, s)
    = \varphi_m (A^{-1}(A(x) + s))
    .
\]
Denote $\|\varphi_m\|_{\cP_1} = \max_{z \in \cP_1} |\varphi_m(z^{1 / \gamma})|$.

In the code, we use the following observations:

\begin{rmk}
    It is standard (see e.g.~\cite[Section~1.4.2]{MH03}) that
    $\max_{|x| \leq \delta} |\varphi_m(x)| = | \varphi_m (-\delta) |$.
    Accordingly,  with $q = 2 r_1^{ - 1 / \gamma}$,
    \[
        \|\varphi_m\|_{\cP_1}
        = | \varphi_m (-r_1^{- 1 / \gamma}) |
        = \frac{1}{2} \Bigl[
            \Bigl( 1 + q + \sqrt{q^2 + 2 q} \Bigr)^m +
            \Bigl( 1 + q - \sqrt{q^2 + 2 q} \Bigr)^m
        \Bigr]
        .
    \]
\end{rmk}

\begin{rmk}
    The integral~\eqref{eq:EMint} becomes simply
    $- A'(x) \int_0^x \varphi_m(u) \, du$.
    Such integrals can be computed recursively in $m$,
    see Appendix~\ref{app:integrals}.
\end{rmk}

\subsubsection{First return time}
\label{sec:comp-return}

Let $K \geq 0$ and $\varphi(x, k) = (k + K) \varphi_m(x_k)$, where $\varphi_m$ is as
in~\eqref{eq:phi-n-cheb}.

Let $z = x^\gamma$, $t = z^{-1}$, $w = \tA(t)$.
In the complex plane, write
\[
    \varphi(x, s) = (s + K) \varphi_m(A^{-1}(A(x)+s))
\]
and
\[
    \psi(s)
    = \kappa(w) \frac{(s + K) \varphi_m(A^{-1}(A(x)+s))}{\kappa(w+s)}
    .
\]

Suppose that $\varkappa_1$ and $\nu$ are as in~\eqref{eq:nuka}, with $\varkappa_1 < \varkappa_0$.

\begin{lemma}
    \label{lem:tau_bound}
    Suppose that $\varkappa_1 < \varkappa_0$. Let
    $B = \frac{\varkappa_0 - \varkappa_1}{a_{-1}-C_1}$ and $C = \frac{\varkappa_1 \nu}{a_{-1}-C_1}$.
    If  $t$ is real with $t \ge r_1 + C$, then
    $\sup \{ |\psi(s + \zeta)| : s \geq 0, \, |\zeta| \leq \varkappa_1 (s + \nu) \} \leq C_\psi$,
    where
    \[
        C_\psi
        = \frac{a_{-1}+C_1}{a_{-1}-C_1}
        |t|^{1+1/\gamma}
        \Bigl[
            \frac{1+\varkappa_1}{B (t - C)^{1/\gamma}} + \frac{K + \varkappa_1 \nu}{(t - C)^{1+1/\gamma}}
        \Bigr]
        \|\varphi_m\|_{\cP_1}
        .
    \]
\end{lemma}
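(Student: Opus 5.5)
The plan is to bound the three factors of $|\psi(s+\zeta)|$ separately and then multiply:
\[
    |\psi(s+\zeta)|
    = \Bigl|\frac{\kappa(w)}{\kappa(w+s+\zeta)}\Bigr| \,
    |s+\zeta+K| \,
    \bigl|\varphi_m\bigl(A^{-1}(w+s+\zeta)\bigr)\bigr| .
\]
Throughout, $s \geq 0$ and $|\zeta| \leq \varkappa_1(s+\nu)$.

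\emph{Chebyshev factor.} By Lemma~\ref{lem:disk_inclusion}, the point $w+s+\zeta$ lies in $\hA(\cP_1)$. Hence $A^{-1}(w+s+\zeta)$ lies in $\cP_1^{1/\gamma}$, and the factor is at most $\|\varphi_m\|_{\cP_1}$.

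\emph{Linear factor.} Directly, $|s+\zeta+K| \leq K + s + \varkappa_1(\nu+s) = (K+\varkappa_1\nu) + (1+\varkappa_1)s$.

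\emph{$\kappa$-ratio.} This step carries the content of the lemma. As in the proof of Lemma~\ref{lem:kk}, write $\kappa(w) = -\tA'(t)\,t^{1+1/\gamma}$ and set $t' = \tA^{-1}(w+s+\zeta)$. Lemma~\ref{lem:CA} then gives
\[
    \Bigl|\frac{\kappa(w)}{\kappa(w+s+\zeta)}\Bigr|
    \leq \frac{a_{-1}+C_1}{a_{-1}-C_1}\,\Bigl|\frac{t}{t'}\Bigr|^{1+1/\gamma} .
\]
I need a lower bound on $|t'|$ that grows linearly in $s$ with slope exactly $B$. I would not use the form of Lemma~\ref{lem:kk} as stated. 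Instead I would rerun the path argument from the proof of Lemma~\ref{lem:disk_inclusion}. Along the horizontal path, $\Re(\tA^{-1})' \geq (a_{-1}+C_1)^{-1}$. Along the $\zeta$-path, $|(\tA^{-1})'| \leq (a_{-1}-C_1)^{-1}$. Together these give
\[
    |t'| \geq \Re(t')
    \geq t + \frac{s}{a_{-1}+C_1} - \frac{\varkappa_1(\nu+s)}{a_{-1}-C_1}
    = t - C + Bs .
\]
The last equality uses $\frac{1}{a_{-1}+C_1} - \frac{\varkappa_1}{a_{-1}-C_1} = \frac{\varkappa_0-\varkappa_1}{a_{-1}-C_1} = B$, which follows from the definition of $\varkappa_0$. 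The hypotheses that $t$ is real and $t \geq r_1 + C$ ensure that these paths stay in $\tcP_1$ and that $t-C+Bs>0$. (The same slope can also be read off from Lemma~\ref{lem:kk}: for real $t$, the quantity $|t+a_{-1}'s| - K's - C$ has slope $a_{-1}'-C_1'-\frac{\varkappa_1}{a_{-1}-C_1} = B$, since $a_{-1}'-C_1' = \frac{1}{a_{-1}+C_1}$.)

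\emph{Combining.} Multiplying the three bounds, it remains to show
\[
    \frac{(K+\varkappa_1\nu) + (1+\varkappa_1)s}{(t-C+Bs)^{1+1/\gamma}}
    \leq \frac{K+\varkappa_1\nu}{(t-C)^{1+1/\gamma}} + \frac{1+\varkappa_1}{B(t-C)^{1/\gamma}}
\]
uniformly in $s \geq 0$. I would handle the two terms of the numerator separately:
\begin{itemize}
    \item For the constant term, simply drop $Bs \geq 0$ in the denominator.
    \item For the linear term, use $(t-C+Bs)^{1+1/\gamma} \geq (Bs)\,(t-C)^{1/\gamma}$. This holds because $t-C+Bs$ exceeds both $Bs$ and $t-C$.
\end{itemize}
Taking the supremum over $s$ and $\zeta$ then yields exactly the stated $C_\psi$. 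The only delicate point is getting the slope $B$ exactly right in the lower bound for $|t'|$. Everything else is monotonicity.
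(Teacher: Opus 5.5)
Your proposal is correct and follows essentially the same route as the paper. It uses the same factorization of $|\psi(s+\zeta)|$, the same lower bound $|t'| \geq t + Bs - C$ (which the paper reads off from Lemma~\ref{lem:kk}, exactly as in your parenthetical remark), and the same splitting of the numerator; your inequality $(t-C+Bs)^{1+1/\gamma} \geq Bs\,(t-C)^{1/\gamma}$ is equivalent to the paper's $(1+\varkappa_1)s \leq \frac{1+\varkappa_1}{B}(Bs+t-C)$, and your use of Lemma~\ref{lem:disk_inclusion} for the Chebyshev factor simply makes explicit a step the paper leaves implicit.
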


\begin{proof}
    As in the proof of Lemma~\ref{lem:kk}, with $t' = \tA^{-1}(w+s+\zeta)$, we have
    \[
        |\psi(s+\zeta)|
        = |K + s + \zeta| \biggl| \frac{\tA'(t)}{\tA'(t')} \biggr|
        \Bigl| \frac{t}{t'} \Bigr|^{1+1/\gamma}
        |\varphi_m(A^{-1}(w+s+\zeta))|
        ,
    \]
    where the derivative ratio is bounded by $\frac{a_{-1}+C_1}{a_{-1}-C_1}$, and
    it remains to bound $\frac{|K + s + \zeta|}{|t'|^{1+1/\gamma}}$.
    Also from the proof of Lemma~\ref{lem:kk}, $|t'| \geq t + B s - C$.
    Using $|K + s + \zeta| \le K + (1+\varkappa_1) s + \varkappa_1 \nu$
    and $(1+\varkappa_1)s \le \frac{1+\varkappa_1}{B}(Bs + t - C)$,
    \begin{align*}
        \frac{|K + s + \zeta|}{|t'|^{1+1/\gamma}}
        & \le \frac{K + (1 + \varkappa_1) s + \varkappa_1 \nu}{(B s + t - C)^{1+1/\gamma}}
        \le \frac{1+\varkappa_1}{B (Bs + t - C)^{1/\gamma}}
        + \frac{K + \varkappa_1 \nu}{(Bs + t - C)^{1+1/\gamma}}
        \\
        & \le \frac{1+\varkappa_1}{B (t - C)^{1/\gamma}}
        + \frac{K + \varkappa_1 \nu}{(t - C)^{1+1/\gamma}}
        .
    \end{align*}
    The result follows.
\end{proof}

Using the substitution $s = A(u) - A(x)$, write the integral~\eqref{eq:EMint} as
\[
    \int_0^\infty \psi(s) \, ds
    = - A'(x) \int_{0}^{x} [A(u) - A(x) + K] \varphi_m(u) \, du
    .
\]

\begin{lemma}
    \label{lem:log-cheb}
    Suppose that $z \in \cP_1$ is real. Then
    \begin{align*}
        \biggl|
        \int_{0}^{x}
        & [A(u) - A(x) + K] \varphi_m(u) \, du
        - \biggl[ \gamma a_\ell \int_0^x \varphi_m(u) \log u \, du
        \\
        &
        + [- A(x) + K] \int_0^x \varphi_m(u) \, du
        + \sum_{j = -1}^{n} a_j \int_0^x u^{\gamma j} \varphi_m(u) \, du
        \biggr] \biggr|
        \leq \frac{C_0 x^{n \gamma + 1}}{n \gamma + 1}
        ,
    \end{align*}
    where $C_0$ and $n$ are from Theorem~\ref{thm:A}.
    (See Appendix~\ref{app:integrals} for the computation of the integrals.)
\end{lemma}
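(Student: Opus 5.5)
The plan is to expand $A(u) = \hA(u^\gamma) = \hA_n(u^\gamma) + E_n(u^\gamma)$ and substitute the explicit form~\eqref{eq:An}. For real $u\in(0,x]$ we have $u^\gamma \le x^\gamma = z$, so $u^{-\gamma} \ge z^{-1}$. Since $z\in\cP_1$ is real, $u^\gamma\in\cP_1\subset\cP$ as well, and the expansion is valid on the whole interval of integration. Using $\log(u^\gamma) = \gamma\log u$, we get
\[
    \hA_n(u^\gamma) = \gamma a_\ell \log u + \sum_{j=-1}^{n} a_j u^{\gamma j}
    ,
\]
where the $j=0$ term is $a_0$, matching the sum in the statement. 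Multiplying $A(u) - A(x) + K$ by $\varphi_m(u)$ and integrating over $[0,x]$ then reproduces exactly the bracketed expression in the lemma: the $\log$ integral with coefficient $\gamma a_\ell$, the constant term $(-A(x)+K)\int_0^x\varphi_m$, and the power integrals. The only thing left over is the remainder
\[
    \int_0^x E_n(u^\gamma)\,\varphi_m(u)\,du
    .
\]

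To bound this remainder, I will use Theorem~\ref{thm:A}, which gives $|E_n(u^\gamma)| \le C_0 u^{\gamma n}$ for $u^\gamma\in\cP$. I also need $|\varphi_m(u)|\le 1$ for $u\in[0,1]$. This holds because $2u-1\in[-1,1]$, and Chebyshev polynomials are bounded by $1$ there. Then
\[
    \Bigl|\int_0^x E_n(u^\gamma)\varphi_m(u)\,du\Bigr| \le C_0\int_0^x u^{n\gamma}\,du = \frac{C_0 x^{n\gamma+1}}{n\gamma+1}
    ,
\]
which is the stated bound.

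The one point needing care is integrability near $u=0$ of the individual pieces in the splitting, since the $j=-1$ term is $a_{-1}u^{-\gamma}\varphi_m(u)$ and $\int_0^x u^{-\gamma}\,du$ diverges when $\gamma\ge1$. This means the separate integrals $\int_0^x u^{\gamma j}\varphi_m$ and the integral of $A(u)\varphi_m(u)$ itself may be infinite. I expect this to be the main obstacle. I would handle it as follows.

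When $\gamma<1$, everything is integrable: the $\log u$ term is integrable as well, and the splitting is a valid identity of finite integrals. When $\gamma\ge1$, I would interpret both sides in the same way. Either integrals are understood as in Appendix~\ref{app:integrals}, or one proves the identity on $[\delta,x]$, where all terms are finite, and notes that the difference between the full expression and the explicit terms is exactly $\int_\delta^x E_n(u^\gamma)\varphi_m(u)\,du$. That quantity converges absolutely as $\delta\to0$ and is bounded uniformly in $\delta$ by the estimate above. Hence the bound holds for the regularized difference, and this is all that is used.

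The remaining steps — substituting the formula and applying the triangle inequality — are routine.
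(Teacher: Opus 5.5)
Your proof is correct and matches the paper's argument: write $A(u)=\hA_n(u^\gamma)+E_n(u^\gamma)$ with $\hA_n(u^\gamma)=\gamma a_\ell\log u+\sum_{j=-1}^n a_j u^{\gamma j}$, then bound the remainder using Theorem~\ref{thm:A} and $|\varphi_m|\le 1$ on $[0,1]$. Your extra care about the individual integrals diverging when $\gamma\ge 1$, handled by working on $[\delta,x]$ and letting $\delta\to 0$, addresses a point the paper does not discuss; the paper only needs the lemma for the return time, where $\gamma<1$ and all integrals are finite.
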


\begin{proof}
    By linearity, the target integral separates into
    $[-A(x) + K] \int_0^x \varphi_m(u) \, du$ and $\int_0^x A(u) \varphi_m(u) \, du$.
    Approximate $A(u)$ by:
    \[
        A(u)
        = \hA(u^\gamma)
        \approx \hA_n(u^\gamma)
        = \gamma a_\ell \log u + \sum_{j=-1}^n a_j u^{j\gamma}
        .
    \]
    Then,
    \[
        \Bigl|
            \int_0^x A(u) \varphi_m(u) \, du
            - \int_0^x \hA_n(u^\gamma) \varphi_m(u) \, du
        \Bigr|
        \leq
        \int_0^x \bigl| A(u) - \hat{A}_n(u^\gamma) \bigr| \; |\varphi_m(u)| \, du
        .
    \]
    By Theorem~\ref{thm:A}, we have $|A(u) - \hat{A}_n(u^\gamma)| \leq C_0 u^{n\gamma}$ for $u \in [0, x]$.
    Furthermore, since $\varphi_m$ is a scaled Chebyshev polynomial,
    $\max_{0 \leq u \leq 1} |\varphi_m(u)| \leq 1$.
    So the right hand side above is bounded by
    $\int_0^x C_0 u^{n\gamma} \, du = \frac{C_0 x^{n\gamma+1}}{n\gamma+1}$.
    The result follows.
\end{proof}

\subsubsection{Lyapunov exponent}
\label{sec:comp-lyapunov}

In this section, we consider the observable
$\varphi(x, k) = \varphi_m(x_k) \log J_k(x)$,
where $\varphi_m$ is a Chebyshev polynomial~\eqref{eq:phi-n-cheb}.

Let $z = x^\gamma$, $t = z^{-1}$, $w = \tA(t)$.
In the complex plane, write $\varphi(x, s) = \varphi_m(A^{-1}(A(x)+s)) \log \frac{\kappa(w + s)}{\kappa(w)}$
and
\[  \psi(s)
    = \frac{
        \varphi_m(A^{-1}(A(x)+s))\log \frac{\kappa(w + s)}{\kappa(w)}
    }{
        \frac{\kappa(w+s)}{\kappa(w)}
    }
    .
\]

Suppose that $\varkappa_1$ and $\nu$ are as in~\eqref{eq:nuka}.

\begin{lemma}
    \label{lem:logJ_complex_k}
    Let $C = \frac{\varkappa_1 \nu}{a_{-1} - C_1}$.
    For real $t > r_1 + C$,
    \[
        \sup \{ |\psi(s + \zeta)| : s \geq 0, \, |\zeta| \leq \varkappa_1 (s + \nu) \} \leq C_\psi
    \]
    with
    \[
        C_\psi
        = M \sqrt{(\log M)^2 + \Theta^2}
        \, \|\varphi_m\|_{\cP_1}
        ,
    \]
    where $M = \frac{a_{-1} + C_1}{a_{-1} - C_1} \bigl( \frac{t}{t - C} \bigr)^{1 + 1/\gamma}$ and $\Theta = \arcsin\bigl(\frac{C_1}{a_{-1}}\bigr) + \bigl(1 + \frac{1}{\gamma}\bigr)\frac{\pi}{2}$.
\end{lemma}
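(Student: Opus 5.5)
We bound the three factors of $\psi(s+\zeta)$ separately, in the manner of Lemma~\ref{lem:tau_bound}. Write $t' = \tA^{-1}(w+s+\zeta)$ and let
\[
    Q = \frac{\kappa(w+s+\zeta)}{\kappa(w)} = \frac{\tA'(t')}{\tA'(t)} \Bigl(\frac{t'}{t}\Bigr)^{1+1/\gamma}.
\]
Then
\[
    |\psi(s+\zeta)| = |\varphi_m(A^{-1}(w+s+\zeta))| \cdot \frac{|\log Q|}{|Q|}.
\]
By Lemma~\ref{lem:disk_inclusion}, $t' \in \tcP_1$, so the Chebyshev factor is bounded by $\|\varphi_m\|_{\cP_1}$. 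It remains to show $|\log Q|/|Q| \le M\sqrt{(\log M)^2+\Theta^2}$.

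\textbf{Modulus of $Q$.} For real $t>r_1+C$, Lemma~\ref{lem:kappa} in its real case gives $|Q|^{-1} \le M$, that is, $|Q|\ge M^{-1}$.

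\textbf{Argument of $Q$.} We use the branch of $\log$ obtained by continuity from $s=\zeta=0$, where $Q=1$. By Lemma~\ref{lem:CA}, both $\tA'(t)$ and $\tA'(t')$ lie in the disk of radius $C_1$ about $a_{-1}$. Hence each has argument in $[-\arcsin(C_1/a_{-1}), \arcsin(C_1/a_{-1})]$. Since $t$ is real and positive, $\arg \tA'(t)$ sits in that same interval; the cleanest bound is to note that the ratio's argument is at most $\arcsin(C_1/a_{-1})$ plus a correction absorbed by the real $t$. I expect this to need care. My first attempt would be to observe that $\tA'$ is real on the real axis, so only $\arg\tA'(t')$ contributes. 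Next, $\Re t'\ge r_1>0$, so $|\arg(t'/t)| = |\arg t'| < \pi/2$ with the principal branch, which gives $|\arg (t'/t)^{1+1/\gamma}|\le (1+1/\gamma)\pi/2$. Since $t'$ stays in the right half plane throughout the homotopy from $\zeta=0$, these principal-branch arguments add consistently. Therefore $|\Im \log Q| \le \Theta$.

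\textbf{Combining.} We have $|\log Q|^2 = (\log|Q|)^2 + (\Im\log Q)^2$. It remains to bound
\[
    \frac{\sqrt{(\log|Q|)^2+\Theta^2}}{|Q|}
\]
over the range $|Q|\ge M^{-1}$. The function $\rho\mapsto \sqrt{(\log\rho)^2+\Theta^2}/\rho$ is decreasing in $\rho$: its derivative has the sign of $\log\rho - (\log\rho)^2 - \Theta^2$, which is negative whenever $\Theta^2\ge 1/4$. This holds here because $\Theta\ge \pi/2$. Hence the supremum is attained at $\rho=M^{-1}$, giving $M\sqrt{(\log M)^2+\Theta^2}$. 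Multiplying by $\|\varphi_m\|_{\cP_1}$ yields $C_\psi$.

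The main obstacle is the argument bound: justifying the branch of $\log$ and the additivity of the two argument contributions. I would handle it by continuity along the paths $P_1$, $P_2$ from Lemma~\ref{lem:disk_inclusion}, along which $t'$ remains in $\tcP_1$.
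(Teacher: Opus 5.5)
Your proof is correct and follows essentially the paper's argument. You bound the modulus via the real case of Lemma~\ref{lem:kappa}, and the argument by $\Theta$ using $\arg\tA'(t)=0$, $|\arg\tA'(t')|\le\arcsin(C_1/a_{-1})$ and $|\arg t'|<\pi/2$, then conclude by monotonicity (your decreasing function of $\rho=1/r$ is the paper's increasing $r\sqrt{(\log r)^2+\Theta^2}$); your continuity argument fixing the branch of the logarithm is extra care that the paper leaves implicit.
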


\begin{proof}
    Let $Y = \frac{\kappa(w)}{\kappa(w+s+\zeta)}$.
    We aim to bound
    \[
        |Y \log(1/Y)| = |Y \log Y|
        = |Y| \sqrt{ [\log |Y|]^2 + [\arg Y]^2}
        .
    \]
    By Lemma~\ref{lem:kappa}, the modulus $r = |Y|$ is bounded by $M$.
    To bound the argument $\theta = \arg Y$, we use $Y = \frac{\tA'(t)}{\tA'(t')} \bigl( \frac{t}{t'} \bigr)^{1+1/\gamma}$, where $t' = \tA^{-1}(w+s+\zeta)$.
    Since $t$ is real, $\arg(t) = \arg(\tA'(t)) = 0$.
    By Lemma~\ref{lem:CA}, $|\arg(\tA'(t'))| \leq \arcsin(C_1/a_{-1})$.
    Furthermore, since $t' \in \tcP_1$, it lies in the right half-plane, which implies $|\arg(t')| < \pi/2$.
    Thus, $|\theta| \leq \Theta$.
    Note that the function $g(r) = r \sqrt{(\log r)^2 + \Theta^2}$ has derivative
    $g'(r) = \frac{(\log r)^2 + \log r + \Theta^2}{\sqrt{(\log r)^2 + \Theta^2}}$.
    Since $\Theta > \pi/2 > 1/2$, the numerator is strictly positive for all $r > 0$, making $g(r)$ strictly increasing on $(0, \infty)$.
    Substituting the upper bounds $r \leq M$ and $|\theta| \leq \Theta$ yields the result.
\end{proof}

\begin{lemma}
    \label{lem:uhuh}
    Suppose $x$ is real, such that $z \in \cP_1$. Then
    \begin{align*}
        \int_0^\infty \psi(s) \, ds
    & = A'(x) \biggl[
        \log \frac{-A'(x)}{\gamma} \int_0^x \varphi_m(u) \, du
        + (\gamma + 1) \int_0^x \varphi_m(u) \log u \, du
        \\
        &
        - \int_0^x \varphi_m(u) \log \bigl[
            a_{-1} - a_\ell u^\gamma - a_1 u^{2 \gamma} - 2 a_2 u^{3 \gamma}
            - \cdots - n a_n u^{(n + 1) \gamma}
        \bigr]\, du
    \biggr]
    + E
    ,
    \end{align*}
    where
    \[
        |E|
        \leq x A'(x) \log \Bigl[
            1 - \frac{C_0 (t - 1)^{-n}}{a_{-1} - C_1}
        \Bigr]
        .
    \]
\end{lemma}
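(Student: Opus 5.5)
We start from the change of variables~\eqref{eq:EMint}, which gives
\[
    \int_0^\infty \psi(s)\,ds = -A'(x)\int_0^x \varphi_m(u)\,\log J_{A(u)-A(x)}(x)\,du ,
\]
with $J_s(x) = \kappa(w+s)/\kappa(w)$ and $\kappa(A(u)) = \frac1\gamma A'(u)$. Thus the integrand is
\[
    \varphi_m(u)\log\frac{A'(u)}{A'(x)}
    = \varphi_m(u)\bigl[\log(-A'(u)) - \log(-A'(x))\bigr],
\]
where both $A'$ are negative by Lemma~\ref{lem:Ap}. The term $-\log(-A'(x))$ is constant in $u$. Together with the prefactor $-A'(x)$ it yields $A'(x)\log(-A'(x))\int_0^x\varphi_m$. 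The remaining $\log\gamma$ bookkeeping has to match the stated $\log\frac{-A'(x)}{\gamma}$, and I will track it carefully.

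For the $u$-dependent part, I write $A'(u) = \gamma u^{\gamma-1}\hA'(u^\gamma)$ and $\hA'(z) = -z^{-2}\tA'(1/z)$. This gives
\[
    -A'(u) = \gamma\, u^{-\gamma-1}\,\tA'(u^{-\gamma}),
    \qquad
    \log(-A'(u)) = \log\gamma - (\gamma+1)\log u + \log \tA'(u^{-\gamma}).
\]
This produces the $(\gamma+1)\int_0^x\varphi_m\log u$ term, up to the overall sign convention. Next I replace $\tA'$ by $\tA_n'$. From~\eqref{eq:Atn},
\[
    \tA_n'(t) = a_{-1} - a_\ell t^{-1} - a_1t^{-2} - \cdots - n a_n t^{-n-1},
\]
and with $t = u^{-\gamma}$ this is exactly the bracket $a_{-1} - a_\ell u^\gamma - \cdots - n a_n u^{(n+1)\gamma}$ in the statement. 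Collecting terms and multiplying by the prefactor should reproduce the claimed main expression. The $\log\gamma$ pieces combine with $\log(-A'(x))$, so I will verify the signs explicitly at this point.

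The error is
\[
    E = -A'(x)\int_0^x \varphi_m(u)\,\log\frac{\tA'(u^{-\gamma})}{\tA_n'(u^{-\gamma})}\,du .
\]
I bound $|\varphi_m|\le 1$ on $[0,1]$, as in Lemma~\ref{lem:log-cheb}. By Lemma~\ref{lem:CAn}, $|\tA'(s)-\tA_n'(s)|\le C_0(s-1)^{-n}$ for real $s = u^{-\gamma}\ge t\ge r_1$. Combining this with $\tA_n' \ge a_{-1}-C_1$ (from Lemma~\ref{lem:CA}, adjusted for $\tA_n'$ versus $\tA'$) gives
\[
    \Bigl|\frac{\tA'}{\tA_n'}-1\Bigr| \le \frac{C_0(s-1)^{-n}}{a_{-1}-C_1} \le \frac{C_0(t-1)^{-n}}{a_{-1}-C_1} =: \delta,
\]
using monotonicity in $s$. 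Then $|\log(1+y)|\le -\log(1-\delta)$ for $|y|\le\delta<1$. Integrating over $[0,x]$ contributes a factor $x$, so $|E|\le x\,|A'(x)|\,(-\log(1-\delta))$, which is the stated bound because $A'(x)<0$.

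The main obstacle is the lower bound on $\tA_n'$ in the denominator. Lemma~\ref{lem:CA} controls $\tA'$, not $\tA_n'$. I will try to use the direct estimate $|\tA_n'-a_{-1}|\le C_1 - C_0(r_1-1)^{-n}\le C_1$, which follows from the proof of Lemma~\ref{lem:CA}. Failing that, I will instead place $\tA'$ in the denominator and use $|\log(1+y)|$ with $y = (\tA_n'-\tA')/\tA'$. Either route gives the factor $(a_{-1}-C_1)^{-1}$.
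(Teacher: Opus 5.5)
Your proposal is correct and follows essentially the same route as the paper. Both use the change of variables \eqref{eq:EMint} and split $\log(-A'(u))$ into $\log\gamma$, a multiple of $\log u$, and the logarithm of the derivative of the Abel function. Both then compare this logarithm with its truncated version using Lemma~\ref{lem:CAn}, $|\varphi_m|\le 1$ (which gives the factor $x$), and the lower bound $a_{-1}-C_1$.

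The differences are cosmetic. The paper phrases the comparison via $\hA'$ and $\hA_n'$, whose ratio equals that of $\tA'$ and $\tA_n'$. Your remark that the lower bound on $|\tA_n'|$ really comes from the proof of Lemma~\ref{lem:CA}, via $|\tA_n'-a_{-1}|\le C_1 - C_0(r_1-1)^{-n}$, correctly fills in a step the paper only cites.
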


\begin{proof}
    Using the substitution $s = A(u) - A(x)$, and $\frac{\kappa(w+s)}{\kappa(w)} = \frac{A'(u)}{A'(x)}$,
    write
    \begin{align*}
        \int_0^\infty \psi(s) \, ds
        & = A'(x) \biggl[
            \log(-A'(x)) \int_0^x \varphi_m(u) \, du
            - \int_0^x \varphi_m(u) \log(-A'(u)) \, du
        \biggr]
        \\
        & = A'(x) \biggl[
            \log \frac{-A'(x)}{\gamma} \int_0^x \varphi_m(u) \, du
            - (\gamma - 1) \int_0^x \varphi_m(u) \log u \, du
            \\
            & \qquad \qquad \qquad
            - \int_0^x \varphi_m(u) \log(-\hA'(u^\gamma)) \, du
        \biggr]
        .
    \end{align*}
    It remains to notice that $|\varphi_m| \leq 1$ on $[0,x]$ and by Lemmas~\ref{lem:CAn} and~\ref{lem:CA},
    \begin{align*}
        \bigl| \log(-\hA') - \log(-\hA_n') \bigr|
        & \leq - \log \Bigl[
            1 - \Bigl| \frac{\hA' - \hA'_n}{\hA'_n} \Bigr|
        \Bigr]
        = - \log \Bigl[
            1 - \Bigl| \frac{\tA' - \tA'_n}{\tA'_n} \Bigr|
        \Bigr]
        \\
        & \leq - \log \Bigl[
            1 - \frac{C_0 (t - 1)^{-n}}{a_{-1} - C_1}
        \Bigr]
        .
    \end{align*}
\end{proof}

\begin{rmk}
    \label{rmk:uhuh-int}
    All three integrals in Lemma~\ref{lem:uhuh} are computed explicitly.
    First two allow a closed-form expression, and for the third integral we approximate
    \[
        \log(a_{-1} - a_\ell u^\gamma - \cdots - n a_n u^{(n + 1) \gamma})
        \approx \log a_{-1} + c_1 u^\gamma + \cdots + c_\ell u^{\ell \gamma}
    \]
    as in Lemma~\ref{lem:log-poly}, which leaves us with a sum of integrals of $u^{k \gamma} \varphi_m(u)$
    which have a closed form expression, see Appendix~\ref{app:integrals}.
\end{rmk}

\section{Invariant density on \texorpdfstring{$[1/2,1]$}{[1/2,1]}}
\label{sec:id}

The main result of this section, Theorem~\ref{thm:hind}, is that the invariant
density $h$ of the induced transfer operator $\cL$ on $[1/2,1]$ is analytic
on an ellipse $A$ with foci at $1/2$ and $1$ and allows an approximation by
a finite sum of Chebyshev polynomials with an explicit error bound on $A$.


\subsection{Bernstein ellipses}

Bernstein ellipses with parameter $\rho$ by default mean
the standard Bernstein ellipses around $[-1,1]$ moved to $[1/2,1]$
by the linear map $z \mapsto z / 4 + 3 / 4$.

We are about to construct two Bernstein ellipses $A \subset C$
so that inverse branches of the induced map are analytic in $C$ and map
$C$ inside $A$.

Let $\theta_C = \min \{ \pi / (2 \gamma), \pi / 2 \}$ and
let $S_C$ be the sector $\{ z : |\arg(z)| < \theta_C \}$.

\begin{rmk}
    \label{rmk:NoWa}
    It is a direct verification that the principal branch of $f$ is analytic and invertible in $S_C$.
    The invertibility is a consequence of the Noshiro-Warschawski theorem: by the definition of $S_C$,
    any $z \in S_C$ satisfies $|\arg(z^\gamma)| < \pi / 2$, which ensures $\Re(z^\gamma) > 0$.
    Consequently, the derivative satisfies $\Re(f'(z)) = \Re(1+2^\gamma(\gamma+1)z^\gamma) > 1 > 0$.
    Since the sector $S_C$ is convex, this guarantees univalence.
    Furthermore, one can verify that $|f(z)| > |z|$ for all $z \in S_C$
    and $|\arg f(z)| > |\arg z|$ for all non-real $z \in S_C$,
    yielding $f^{-1}(S_C) \subset S_C$.
    As a corollary, all inverse branches of the induced map are analytic in $S_C$.
\end{rmk}

Let $C$ be a Bernstein ellipse properly contained in $S_C$, with parameter
\[
    \rho_C
    = \Bigl[ \sqrt{1 + 8 \sin^2(\theta_C)} + \sqrt{8} \sin(\theta_C) \Bigr]^{\eps_C}
    ,
\]
where $\eps_C \in (0,1)$. Choose e.g.\ $\eps = 7/8$ for the code.

Let $A$ be the Bernstein ellipse with parameter
\[
    \rho_A
    = \frac{s_A + \sqrt{s_A^2 - 4}}{2}
    , \quad \text{where} \quad
    s_A = 1 + \frac{1}{2} \Bigl( \rho_C + \frac{1}{\rho_C} \Bigr)
    .
\]

\[
\begin{tikzpicture}[scale=3]
  \def\mygamma{1.5}

  \pgfmathsetmacro{\tC}{min(90/\mygamma, 90)}
  \pgfmathsetmacro{\rC}{(sqrt(1 + 8*sin(\tC)^2) + sqrt(8)*sin(\tC))^0.875}
  \pgfmathsetmacro{\s}{1 + 0.5*(\rC + 1/\rC)}
  \pgfmathsetmacro{\rB}{(\s + sqrt(\s^2 - 4))/2}

  \pgfmathsetmacro{\aC}{0.125*(\rC + 1/\rC)} \pgfmathsetmacro{\bC}{0.125*(\rC - 1/\rC)}
  \pgfmathsetmacro{\aB}{0.125*(\rB + 1/\rB)} \pgfmathsetmacro{\bB}{0.125*(\rB - 1/\rB)}

  \draw[thick, gray] (0,0) -- (\tC:0.8) node[right, text=black] {$S_C$};
  \draw[thick, gray] (0,0) -- (-\tC:0.8);

  \draw[->, black] (-0.2, 0) -- (1.8, 0);
  \draw[->, black] (0, -0.75) -- (0, 0.75);

  \draw[thick]         (0.75, 0) ellipse (\aC cm and \bC cm) node[below] at (0.75, \bC) {$C$};
  \draw[thick, dotted] (0.75, 0) ellipse (\aB cm and \bB cm) node[below]  at (0.75, \bB) {$A$};
  \draw[thick, dotted] (0.5, 0) ellipse (2*\aB cm and 2*\bB cm) node[left]  at (0.5 - 2*\aB + 0.05, \bB) {$2A-1$};

  \filldraw (0.0,0) circle (0.5pt) node[below left] {$0$};
  \filldraw (0.5,0) circle (0.5pt) node[below] {$1/2$};
  \filldraw (1.0,0) circle (0.5pt) node[below] {$1$};
\end{tikzpicture}
\]

\begin{lemma}
    \label{lem:CtoB}
    Every inverse branch of the induced map maps $C$ into $A$.
    That is, $(f^{-k}(z) + 1) / 2 \in A$ for all $z \in C$
    and $k \geq 0$.
\end{lemma}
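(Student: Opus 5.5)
The plan is to reduce everything to a comparison of sums of focal distances. Recall that a Bernstein ellipse with parameter $\rho$ moved to $[1/2,1]$ has semi-major axis $\frac18(\rho+\rho^{-1})$. So $z\in C$ iff $|z-\frac12|+|z-1|<\frac14(\rho_C+\rho_C^{-1})$. Likewise $(w+1)/2\in A$ iff $w$ lies in the ellipse $2A-1$, which has foci $0,1$. By the choice $s_A = 1+\frac12(\rho_C+\rho_C^{-1})$ and $\rho_A+\rho_A^{-1}=s_A$, this is the condition
\[
|w|+|w-1| < \tfrac12 + \tfrac14\bigl(\rho_C+\rho_C^{-1}\bigr).
\]
This explains the specific formula for $\rho_A$. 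So the lemma amounts to proving
\[
|f^{-k}(z)|+|f^{-k}(z)-1| < \tfrac12 + \tfrac14(\rho_C+\rho_C^{-1})
\quad\text{for all } z\in C,\ k\ge0 .
\]

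For $k=0$ this is immediate from the triangle inequality: $|z|\le |z-\frac12|+\frac12$. For $k\ge1$ I would use the dynamics of $f^{-1}$ on $S_C$ from Remark~\ref{rmk:NoWa}. Since $C\subset S_C$ and $f^{-1}(S_C)\subset S_C$, every $w=f^{-k}(z)$ lies in $S_C$. Moreover, $|f(u)|>|u|$ and $|\arg f(u)|>|\arg u|$ on $S_C$ give $|f^{-1}(u)|<|u|$ and $|\arg f^{-1}(u)|<|\arg u|$. Hence $f^{-k}(z)$, for $k\ge1$, lies in the set
\[
W=\{u: |u|\le \sup_{v\in C}|f^{-1}(v)|,\ |\arg u|\le \sup_{v\in C}|\arg f^{-1}(v)|\},
\]
a truncated sector that is forward-invariant under $f^{-1}$. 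It therefore suffices to show $W\subset 2A-1$, i.e.\ $|u|+|u-1|<\frac12+2a_C$ on $W$, where $a_C=\frac18(\rho_C+\rho_C^{-1})$.

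For this last inclusion, note that on $u\in[0,1]$ the focal sum equals $1$. Also $2a_C>\frac12$, with the margin $2a_C-\frac12$ controlled from below by the height of $C$, which is fixed by $\rho_C$ and $\theta_C$. So I need $f^{-1}(C)$ to be a thin region hugging $[0,1/2]$. To get it, I would bound $f^{-1}$ on $C$ explicitly. Since $\Re f'\ge1$ on the convex set $S_C$, the map $f^{-1}$ is $1$-Lipschitz there, so
\[
|f^{-1}(v)-f^{-1}(\Re v)|\le |\Im v| \le b_C,
\]
where $b_C=\frac18(\rho_C-\rho_C^{-1})$. Also $f^{-1}$ maps $[1/2,1]$ into $[0,1/2]$, roughly. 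A point $u$ with $\Re u\in[0,1/2]$ (or slightly beyond) and $|\Im u|\le b_C$ has focal sum at most about $1+2b_C^2$. Comparing with $\frac12+2a_C$, using $a_C^2-b_C^2=\frac1{16}$, should give the inclusion, perhaps after a short case split on $\Re u$.

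The main obstacle is this final geometric inequality, which must hold uniformly for all $\gamma$, with $\theta_C$ and $\rho_C$ depending on $\gamma$ through $\min\{\pi/(2\gamma),\pi/2\}$. The Lipschitz bound may be too crude when $C$ is fat. In that case I would first try the sharper contraction $|(f^{-1})'|=1/|f'|$ with $\Re f'\ge 1+(\gamma+1)2^\gamma\Re(u^\gamma)$ near the real axis, which pulls $f^{-1}(C)$ closer to $[0,1/2]$. Failing that, I would check the inclusion via the radial and angular bounds defining $W$.
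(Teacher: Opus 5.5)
\textbf{There is a genuine gap in the last step.} Everything before it is sound. Your $k=0$ inclusion $C\subset 2A-1$ via the triangle inequality is exactly the paper's ``by construction''. The fact that $f^{-k}(z)\in W$ for $k\ge 1$ follows from Remark~\ref{rmk:NoWa}. But the inclusion $W\subset 2A-1$, which carries the whole lemma, is left open, and the route you sketch does not reach it.

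Two things go wrong with that route:
\begin{itemize}
\item The $1$-Lipschitz estimate controls $f^{-1}(C)$, not $W$. Write $R_1=\sup_{v\in C}|f^{-1}(v)|$ and $\Phi_1=\sup_{v\in C}|\arg f^{-1}(v)|$. Then $W$ is a truncated sector built from these two decoupled suprema, and it contains points such as the corner $R_1e^{i\Phi_1}$ that a box around $f^{-1}(C)$ says nothing about.
\item The claimed bound ``focal sum at most about $1+2b_C^2$'' on $\{\Re u\in[0,1/2],\ |\Im u|\le b_C\}$ is false near $\Re u=0$. There the focal sum is $b_C+\sqrt{1+b_C^2}\approx 1+b_C$, while the threshold is $\frac12+2a_C=\frac12+\sqrt{\frac14+4b_C^2}\approx 1+4b_C^2$. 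So for thin $C$ (large $\gamma$) that box is not inside $2A-1$, and you would need a $\gamma$-dependent localization of $\Re f^{-1}(C)$ that you do not supply.
\end{itemize}

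\textbf{The missing idea makes all quantitative control of $f^{-1}(C)$ unnecessary.} An ellipse with foci $0$ and $1$ is monotone in modulus and argument. Write $u=\rho e^{i\phi}$ and $F(u)=|u|+|u-1|=\rho+\sqrt{\rho^2-2\rho\cos\phi+1}$. Then:
\begin{itemize}
\item $\partial_\phi F=\rho\sin\phi/|u-1|\ge 0$ for $\phi\in[0,\pi]$;
\item $\partial_\rho F=1+(\rho-\cos\phi)/|u-1|\ge 0$, since $|u-1|^2-(\rho-\cos\phi)^2=\sin^2\phi$.
\end{itemize}
Hence $|u|\le |w|$ and $|\arg u|\le|\arg w|$ imply $F(u)\le F(w)$.

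Since $f^{-1}$ does not increase modulus or $|\arg|$ on $S_C$, you get $F(f^{-1}(w))\le F(w)$ for $w\in S_C$. So $(2A-1)\cap S_C$ is itself forward invariant under $f^{-1}$. Induction from $C\subset 2A-1$ then gives $f^{-k}(C)\subset 2A-1$ for all $k\ge 0$ and all $\gamma$ at once.

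This is precisely what the paper's one-line proof (``iterations of $f^{-1}$ shrink both absolute value and argument closer to $0$'') relies on. In your language: take the invariant set to be the ellipse itself rather than $W$. With this monotonicity, $W\subset 2A-1$ would also reduce to checking the single corner point $R_1e^{i\Phi_1}$, but even that is unnecessary.
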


\begin{proof}
    We have to show that $f^{-k}(C) \subset 2 A - 1$ for all $k \geq 0$.
    Observe that $2 A - 1$ is an ellipse with foci at $0$ and $1$,
    and $C \subset 2 A - 1$ by construction.
    The iterations of $f^{-1}$ shrink both absolute value and
    argument closer to $0$, which guarantees that $f^{-k}(C)$
    stays in $2 A - 1$ for all $k \geq 1$.
\end{proof}

Similar to the ellipses $A,C$, by slightly tweaking the parameter
$\eps_C$, we choose slightly larger ellipses $A_+, C_+$.

 \subsection{Bounds on sums in \texorpdfstring{$A$}{A}}

Here we bound $S(x)$ in $A$ for all those $\varphi(x,k)$.

Let
\begin{equation}
    \label{eq:p}
    p = \bigl[ \inf_{z \in A^\gamma} \Re(z) \bigr]^{1/\gamma}
    .
\end{equation}

\begin{rmk}
    \label{rmk:aA}
    If $A^\gamma$ is convex, e.g.\ when $\gamma \approx 1$, then $p = 3/4 - a_A$ is the
    leftmost point of $A$.
\end{rmk}

\begin{lemma}
    \label{lem:Jp}
    $|J_k(x)| \geq J_k(p)$ for all $x \in A$.
\end{lemma}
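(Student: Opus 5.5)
The plan is to reduce the claim to a factor-by-factor comparison in the product
\[
    J_k(x) = \prod_{j=1}^k \bigl(1 + (\gamma+1) 2^\gamma x_j^\gamma\bigr),
    \qquad x_j = f^{-j}(x),
\]
and compare each factor with the corresponding factor for $p$. For $x \in A$ we have $x_j \in S_C$ by Remark~\ref{rmk:NoWa}, since $A \subset S_C$ and $f^{-1}(S_C)\subset S_C$. Therefore $\Re(x_j^\gamma) > 0$, and
\[
    \bigl|1 + (\gamma+1)2^\gamma x_j^\gamma\bigr| \geq 1 + (\gamma+1)2^\gamma \Re(x_j^\gamma).
\]
It then suffices to show $\Re(x_j^\gamma) \geq p_j^\gamma$ for all $j$, where $p_j = f^{-j}(p)$ is real. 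This is cleanest in the $z = x^\gamma$ coordinate, where the claim becomes $\Re(\hf^{-j}(z)) \geq \hf^{-j}(p^\gamma)$ for all $z \in A^\gamma$. By~\eqref{eq:p}, the starting point satisfies $\Re(z) \geq p^\gamma$.

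The key step is a monotonicity statement: if $\Re(z) \geq \xi > 0$ with $\xi$ real and $z$ in the relevant sector, then $\Re(\hf^{-1}(z)) \geq \hf^{-1}(\xi)$. Induction on $j$ then finishes the proof, since the inverse-branch images stay in the sector $S_C^\gamma = \{|\arg z| < \gamma\theta_C\} \subset \{\Re z > 0\}$.

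To prove the key step, I would fix the real part $u = \Re z$ and show that $\Re(\hf^{-1}(u + iv))$ is minimized at $v = 0$ over the admissible range. For the dependence on $u$, note that $\hf^{-1}$ restricted to the reals is increasing. Alternatively, one can pass to $t = 1/z$ and use the structure already exploited in Lemma~\ref{lem:Imt} and Lemma~\ref{lem:tk}.

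A possibly cleaner route is to work with $w = \hf^{-1}(z)$ directly, writing $z = w(1+bw)^\gamma$. Given $\Re w = \sigma$, I would show that $\Re\bigl(w(1+bw)^\gamma\bigr) \leq \sigma(1+b\sigma)^\gamma$ for $w$ in the half-plane sector. Then if $\Re \hf^{-1}(z) < \hf^{-1}(\xi)$, monotonicity of the real map gives $\Re z < \xi$, a contradiction.

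The inequality $\Re(w(1+bw)^\gamma) \leq \sigma(1+b\sigma)^\gamma$ is where I expect the main difficulty. For $\gamma \leq 1$, one can expand in the imaginary part: the function $v \mapsto \Re\bigl((\sigma+iv)(1+b\sigma+ibv)^\gamma\bigr)$ should be even with a maximum at $v=0$. I would try to verify that its derivative in $v$ has the sign of $-v$, using $\arg(1+bw) \le \arg w$ and concavity-type bounds on $\cos(\gamma\phi)$.

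For $\gamma > 1$, the sector is narrower ($\theta_C = \pi/(2\gamma)$) and the same sign argument should hold there. If it fails, the fallback is to bound $\Re(x_j^\gamma)$ below by $|x_j|^\gamma\cos(\gamma\arg x_j)$ and control $|x_j|$ and $\arg x_j$ separately along the orbit.
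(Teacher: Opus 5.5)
Your reduction is exactly the paper's. You bound each factor by $|1+c z_j|\ge 1+c\,\Re z_j$, with $z_j=\hf^{-j}(z)$ and $c=(\gamma+1)b$. You then prove $\Re z_j\ge \hf^{-j}(p^\gamma)$ by induction, the inductive step being the contrapositive of $\Re\hf(w)\le\hf(\Re w)$ plus monotonicity of $\hf$ on $(0,\infty)$.

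The gap is that this inequality, the only non-trivial ingredient, is left as something you would ``try to verify''. The route you sketch (differentiating $|w|\,|1+bw|^\gamma\cos(\arg w+\gamma\arg(1+bw))$ in $v$ with concavity bounds on $\cos(\gamma\phi)$) is neither carried out nor obviously workable. The missing idea is to differentiate the analytic function itself. By Cauchy--Riemann, $\partial_v\Re\hf(u+iv)=-\Im\hf'(u+iv)$, and
\[
    \hf'(w)=(1+cw)(1+bw)^{\gamma-1},\qquad c=(\gamma+1)b>b .
\]
For $\Re w>0$, $\Im w>0$, put $\theta_1=\arg(1+cw)$ and $\theta_2=\arg(1+bw)$. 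Since $1/c<1/b$, you get $0<\theta_2<\theta_1<\arg w$, and $\arg\hf'(w)=\theta_1+(\gamma-1)\theta_2$. For $\gamma<1$ this lies in $(\theta_1-\theta_2,\theta_1)\subset(0,\pi/2)$, so $\Im\hf'>0$ for $v>0$. Integrating in $v$ from $0$ then gives $\Re\hf(u+iv)\le\hf(u)$, and $v<0$ follows by conjugation.

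For $\gamma\ge1$, your justification that ``the sector is narrower'' does not help. The inequality lives in the $z$-coordinate, where $S_C^\gamma$ is the whole open right half-plane. The same computation gives $\theta_1\le\arg\hf'(w)\le\gamma\theta_1<\gamma\arg w$. So $\Im\hf'>0$ is guaranteed when $\gamma\arg w\le\pi$, which is automatic on the half-plane only for $\gamma\le2$.

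For $\gamma>2$ the inequality $\Re\hf(w)\le\hf(\Re w)$ is actually false on parts of the half-plane. For example, take $\gamma=3$ and $w=\delta+iV$ with $V$ large: then $\hf(w)\approx b^3w^4$ has real part of order $b^3V^4$. So you must additionally show that the relevant points $\hf^{-j}(z)$, $z\in A^\gamma$, lie in the sector $|\arg w|\le\pi/\gamma$; the vertical segments joining them to the real axis then do too. This requires the geometry of $A$ and the contraction of arguments under $f^{-1}$. Note that the paper's own chain for $\gamma\ge1$ ends with $\gamma\arg(w)\le\pi/2$, a restriction of exactly this kind rather than a consequence of $\Re w>0$.

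Your fallback does not bypass this. $\Re(x_j^\gamma)=|x_j|^\gamma\cos(\gamma\arg x_j)$ is an identity, so it merely restates the quantity that has to be compared with $p_j^\gamma$.
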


\begin{proof}
    Recall that $f(x)$ is conjugate to $\hf(z) = z (1 + 2^\gamma z)^\gamma$,
    where $z = x^\gamma$. Let $z_j = \hf^{-j}(z)$, $c = (\gamma + 1) 2^\gamma$ and
    $q = -c^{-1}$, so that
    \begin{equation}
        \label{eq:JJk}
        J_k(z)
        = \prod_{j=1}^k |1 + c z_j|
        = \prod_{j=1}^k c |z_j - q|
        .
    \end{equation}

    We first show that for all $w = u + iv$ with $u > 0$, we have $\Re(\hf(w)) \leq \hf(u)$.
    By symmetry, we assume $v > 0$.

    Indeed, since $\hf$ is analytic, $\frac{\partial}{\partial v} \Re(\hf(w)) = -\Im(\hf'(w))$.
    Consider the argument of $\hf'(w) = c(w - q)(1 + 2^\gamma w)^{\gamma-1}$.
    Let $\theta_1 = \arg(w - q)$ and $\theta_2 = \arg(1 + 2^\gamma w)$.
    Since $q < 0$ and $w$ is in the first quadrant, $0 < \theta_2 < \theta_1 < \arg(w) \leq \frac{\pi}{2}$.
    Now, $\arg (\hf'(w)) = \theta_1 + (\gamma - 1)\theta_2$.
    If $\gamma \geq 1$, we have $0 < \theta_1 \leq \arg(\hf'(w)) < \gamma \theta_1 < \gamma \arg(w) \leq \frac{\pi}{2}$.
    If $\gamma \in (0, 1)$, we have $0 < \theta_1 - \theta_2 < \arg(\hf'(w)) < \theta_1 < \frac{\pi}{2}$.
    In both cases, $\arg(\hf'(w)) \in (0, \pi/2)$, which implies $\Im(\hf'(w)) > 0$.
    Therefore, $\frac{\partial}{\partial v} \Re(\hf(w)) < 0$ for $v > 0$, and $\Re(\hf(w)) \leq \hf(u)$
    as claimed.

    Applying this to the inverse map, if $\Re(z) \geq x_0$ and we write $\hf^{-1}(z) = u + iv$,
    then $x_0 \leq \Re(z) = \Re(\hf(u+iv)) \leq \hf(u)$. Since $\hf$ is increasing on the
    positive real line, $u \geq \hf^{-1}(x_0)$, meaning $\Re(\hf^{-1}(z)) \geq \hf^{-1}(x_0)$.

    By definition, $\Re(z) \geq p^\gamma$ for each $z \in A^\gamma$.
    Inductively, $\Re(z_j) \geq \hf^{-j}(p^\gamma) = p_j^\gamma$.
    Since $q$ is real and negative,
    \[
        |z_j - q|
        \geq \Re(z_j) - q
        \geq p_j^\gamma - q
        = |p_j^\gamma - q|
        .
    \]
    Substituting this lower bound into~\eqref{eq:JJk} completes the proof.
\end{proof}

We record two useful corollaries of Lemma~\ref{lem:Jp}:

\begin{cor}
    \label{cor:k_sum_A}
    \[
        \sup_{x \in A} \sum_{k=0}^\infty \frac{k + 1}{|J_k(x)|}
        \leq \sum_{k=0}^\infty \frac{k + 1}{J_k(p)}
        .
    \]
    The sums are finite if $\gamma < 1$.
\end{cor}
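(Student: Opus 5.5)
The inequality follows termwise from Lemma~\ref{lem:Jp}. For every $x \in A$ and every $k \geq 0$ that lemma gives $|J_k(x)| \geq J_k(p) > 0$; positivity holds because $p > 0$ and each factor $1 + (\gamma+1)2^\gamma p_j^\gamma$ exceeds $1$. Hence $\frac{k+1}{|J_k(x)|} \leq \frac{k+1}{J_k(p)}$ for each $k$. Summing over $k$ (all terms are nonnegative, so rearrangement is harmless) and then taking the supremum over $x \in A$, whose right-hand side does not depend on $x$, yields the stated bound.

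For finiteness when $\gamma<1$, I would estimate $J_k(p)$ from below through the Abel function. Since $J_k(p) = (f^k)'(p_k)$ with $p_k = f^{-k}(p)$, differentiating $A(f^k(y)) = A(y) - k$ at $y = p_k$ gives
\[
J_k(p) = \frac{A'(p_k)}{A'(p)}.
\]
From $A(x)=\hA(x^\gamma)$ we have $A'(x) = \gamma x^{\gamma-1}\hA'(x^\gamma)$. By Lemma~\ref{lem:CA}, $z^2\hA'(z) \to -a_{-1}$ as $z\to0$, so $|A'(x)| \asymp x^{-\gamma-1}$ near $0$. By Lemma~\ref{lem:tk}, $p_k^{-\gamma} = t_k \sim c k$, i.e.\ $p_k^\gamma \asymp 1/k$, and therefore $p_k^{-\gamma-1} \asymp k^{1+1/\gamma}$. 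Here $p_k$ enters the petal $\cP_1^{1/\gamma}$ after finitely many steps, since $p$ is positive real. Consequently
\[
J_k(p) \gtrsim k^{1+1/\gamma}.
\]
A more elementary route avoids the Abel function: use $\log J_k(p) = \sum_{j\le k}\log(1+(\gamma+1)2^\gamma p_j^\gamma)$ with $p_j^\gamma \sim 1/(cj)$, which gives $J_k \gtrsim k^{(\gamma+1)/\gamma}$.

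The terms therefore satisfy
\[
\frac{k+1}{J_k(p)} \lesssim k^{-1/\gamma},
\]
and this is summable exactly when $1/\gamma > 1$, that is, when $\gamma < 1$. The only point requiring care is the two-sided asymptotic $p_k^\gamma \asymp 1/k$, which comes directly from Lemma~\ref{lem:tk}: $\Re(t_j)$ grows at least like $cj/2$, and the bound on $R$ gives $t_j \le t + 2cj$ for real $t \ge r$.
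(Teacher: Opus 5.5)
Your proposal is correct and follows the paper's proof. The inequality is termwise from Lemma~\ref{lem:Jp}, and finiteness for $\gamma<1$ comes from $J_k(p)\asymp k^{1+1/\gamma}$, which the paper only asserts and which your identity $J_k(p)=A'(p_k)/A'(p)$, combined with Lemmas~\ref{lem:CA} and~\ref{lem:tk}, properly justifies. One small caveat on your ``elementary route'': using only $p_j^\gamma\sim 1/(cj)$ gives $J_k(p)=k^{1+1/\gamma+o(1)}$ rather than $J_k(p)\gtrsim k^{1+1/\gamma}$ (you would need $t_k=ck+O(\log k)$ for the sharp bound), although the weaker bound still suffices for summability.
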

\begin{proof}
    The first inequality is immediate from $|J_k(x)| \geq J_k(p)$.
    The sums are finite when $\gamma < 1$ because $J_k(p) \sim k^{1+1/\gamma}$.
\end{proof}

\begin{cor}
    \label{cor:logJ_sum_A}
    \[
        \sup_{x \in A} \sum_{k=0}^\infty \Bigl| \frac{\log J_k(x)}{J_k(x)} \Bigr|
        \leq \sum_{k=0}^\infty \frac{\log J_k(p) + \pi}{J_k(p)}
        < \infty
        .
    \]
\end{cor}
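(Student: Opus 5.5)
The proof splits into a termwise bound for each $k$ and a summability check. For the termwise bound, fix $x \in A$ and $k \geq 0$, and write $\log J_k(x) = \log|J_k(x)| + i \arg J_k(x)$, where the logarithm is the branch obtained as the sum $\sum_{j=1}^k \log(1 + (\gamma+1)2^\gamma x_j^\gamma)$ of principal logarithms of the factors. Then
\[
    \Bigl| \frac{\log J_k(x)}{J_k(x)} \Bigr|
    \leq \frac{\bigl|\log |J_k(x)|\bigr| + |\arg J_k(x)|}{|J_k(x)|}
    .
\]
By Lemma~\ref{lem:Jp}, $|J_k(x)| \geq J_k(p) \geq 1$, since each factor $1 + c\,p_j^\gamma$ exceeds $1$. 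So $\log|J_k(x)| \geq 0$.

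Next, the function $y \mapsto (\log y)/y$ is decreasing for $y \geq e$ but not on all of $[1,\infty)$, so it does not immediately give $\log|J_k(x)|/|J_k(x)| \leq \log J_k(p)/J_k(p)$. I would sidestep this by using $(\log y + \pi)/y$, which is decreasing on $[1,\infty)$: its derivative has numerator $1 - \log y - \pi < 0$ there. This is the role of the additive $\pi$. Hence it suffices to prove $|\arg J_k(x)| \leq \pi$. Then
\[
    \Bigl|\frac{\log J_k(x)}{J_k(x)}\Bigr| \leq \frac{\log|J_k(x)| + \pi}{|J_k(x)|} \leq \frac{\log J_k(p)+\pi}{J_k(p)}
    ,
\]
where the second inequality is the monotonicity just noted combined with $|J_k(x)| \geq J_k(p)$.

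The main obstacle is the argument bound $|\arg J_k(x)| \leq \pi$ uniformly in $k$, since the naive bound grows with $k$. The plan is to follow the argument of the full product through the recursion. Since $\kappa(w+k)/\kappa(w) = J_k(x)$, we can write $J_k(x) = (f^k)'(x_k)$, and the chain rule telescopes this product. For $x_k$ deep in the petal, $\arg (f^k)'(x_k)$ can be read off from $\kappa(w) = -\tA'(t)\,t^{1+1/\gamma}/\gamma$. Lemma~\ref{lem:CA} gives $|\arg \tA'| \leq \arcsin(C_1/a_{-1})$, and $|\arg t| < \pi/2$, so the argument of $J_k$ is a difference of two such quantities. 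This bounds it by about $2\arcsin(C_1/a_{-1}) + (1+1/\gamma)\pi$, which may exceed $\pi$. If that happens, I would instead use the bound $\Theta$ of Lemma~\ref{lem:logJ_complex_k} in place of $\pi$, adjusting the constant accordingly.

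A more direct route uses the real structure instead. Each $x_j$ lies in a sector around the positive axis, so each $\arg(1+c x_j^\gamma)$ has the same sign as $\arg x$ and is bounded by $\arg(x_j^\gamma)$. Since $\arg x_j^\gamma \approx \Im t_j^{-1}$ and $\Im t_j$ is monotone (Lemma~\ref{lem:Imt}) while $\Re t_j$ grows linearly, the sum $\sum_j \Im(x_j^\gamma)$ is comparable to $\int |\Im t|/(|t|^2 + (cj/2)^2)\,dj \leq \pi/c$. This gives a uniform bound of the required size. Finally, finiteness of the right-hand side of the corollary follows because $J_k(p) \sim k^{1+1/\gamma}$ while $\log J_k(p) = O(\log k)$.
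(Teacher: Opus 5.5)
Your termwise reduction is the paper's own. You bound $|\log J_k|\le\log|J_k|+|\arg J_k|$, use Lemma~\ref{lem:Jp} for $|J_k(x)|\ge J_k(p)\ge1$, use the decrease of $(\log y+\pi)/y$ on $[1,\infty)$, and get summability from $J_k(p)\sim k^{1+1/\gamma}$. The paper uses that monotonicity without comment, and your derivative check of it is correct.

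The gap is the one step with real content: $|\arg J_k(x)|\le\pi$ uniformly in $k$ and $x\in A$, for the continuous branch $\sum_j\log(1+(\gamma+1)2^\gamma x_j^\gamma)$ you fixed. Neither of your routes proves it.

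\emph{Both routes} lean on the petal estimates (Lemmas~\ref{lem:tk}, \ref{lem:Imt}, \ref{lem:CA}, \ref{lem:logJ_complex_k}). These need $\Re(x^{-\gamma})\ge r$ or $\ge r_1$. But already for $x\in[1/2,1]$ one has $x^{-\gamma}\in[1,2^\gamma]$, which is below $r=2^\gamma(\gamma+1)$. So $J_k(x)=\kappa(w+k)/\kappa(w)$ cannot be controlled through Lemma~\ref{lem:CA} at $w=A(x)$. The contribution of the iterates $x_j$ before they enter the petal is never bounded.

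\emph{Route one}, even where it applies, gives $2\arcsin(C_1/a_{-1})+(1+1/\gamma)\pi>\pi$. Replacing $\pi$ by $\Theta$ proves a different inequality from the one stated.

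\emph{Route two} has three further errors.
\begin{enumerate}
\item The bound $|\arg(1+cz_j)|\le|\arg z_j|$ is true but useless, since $|\arg z_j|=|\arg t_j|$ decays only like $1/j$. The summable bound is $|\arg(1+w)|\le|\Im w|$ for $\Re w\ge0$. Your approximation $\arg z_j\approx\Im z_j$ is false.
\item Lemma~\ref{lem:Imt} makes $|\Im t_j|$ \emph{nondecreasing}, so $|\Im t|$ is a lower bound for your numerator, not an upper bound. Lemma~\ref{lem:tk} only gives the additive bound $|\Im t_j|\le|\Im t|+\sum_{i\le j}|R(t_i)|$.
\item The Jacobian constant is $(\gamma+1)2^\gamma$, while the drift in Lemma~\ref{lem:tk} is $\gamma2^\gamma$; the paper reuses the letter $c$ for both. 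Even granting everything else, you get $(1+1/\gamma)\pi\,|\Im t|/|t|$, which is not $\le\pi$ in general.
\end{enumerate}

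For comparison, the paper does not prove this bound either. It asserts $|\arg J_k(x)|\le\pi$, citing univalence of $f^{-k}$ on $A$, and then runs exactly your chain. With the principal argument the bound is automatic, and your chain already finishes the proof. For the continuous branch, which is the one needed for analyticity of $S_{\bh}$ in Lemma~\ref{lem:Lambda-norm}, univalence alone does not give $\pi$.

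One honest way to close the gap is as follows.
\begin{itemize}
\item For $\Im x>0$, all $x_j$ stay in the upper half of $S_C$, by Remark~\ref{rmk:NoWa} and real symmetry. So every term $\arg(1+(\gamma+1)2^\gamma x_j^\gamma)$ lies in $(0,\pi/2)$, and $\arg J_k(x)$ increases with $k$.
\item Write $J_k(x)=A'(x_k)/A'(x)$. Lemma~\ref{lem:CA} together with $\arg t_k\to0$ gives $\arg(-A'(x_k))\to0$. Hence the limit of $\arg J_k(x)$ is $-\arg(-A'(x))$, for the branch of $\log(-A')$ real on $[1/2,1]$.
\item So your claim is equivalent to $\sup_{x\in A}|\arg(-A'(x))|\le\pi$. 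This is a harmonic function, so checking it on $\partial A$ suffices.
\end{itemize}
That is a statement about one fixed function, and it must be verified directly, e.g.\ by rigorous numerics, not from the petal asymptotics.
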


\begin{proof}
    The map $f^{-k}$ is univalent on $A$, and $|\arg J_k(x)| \leq \pi$ for all $x \in A$.
    Consequently, $|\log J_k(x)| \leq \log |J_k(x)| + \pi$.
    By Lemma~\ref{lem:Jp}, $|J_k(x)| \geq J_k(p) \geq 1$.
    Hence
    \[
        \Bigl| \frac{\log J_k(x)}{J_k(x)} \Bigr|
        \leq \frac{\log|J_k(x)| + \pi}{|J_k(x)|}
        \leq \frac{\log J_k(p) + \pi}{J_k(p)}
        .
    \]
    The sum is finite because $J_k(p) \sim k^{1+1/\gamma}$.
\end{proof}

\subsection{Bounds in Hardy spaces}

The main function spaces now are the Hardy spaces of analytic functions
on Bernstein ellipses, e.g.\ $H_A$ with the supremum norm $\|\cdot\|_A$.
For operator norms we use the notation such as
$\|Q\|_{B \to A} = \sup_v \|Q v\|_A / \|v\|_B$.

\begin{lemma}
    \label{lem:LAA}
    $\cL \colon H_A \to H_A$ is a bounded operator with
    \[
        \| \cL \|_{A \to A} \leq \frac{1}{2} S(p)
        ,
    \]
    where $S$ is the sum~\eqref{eq:S} for $\varphi \equiv 1$
    and $p$ is as in~\eqref{eq:p}.
\end{lemma}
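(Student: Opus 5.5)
We estimate $\cL v$ pointwise on $A$ directly from the series~\eqref{eq:L}. The plan is to bound each term uniformly using the previous two lemmas, then sum the resulting real series.

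Fix $v \in H_A$ and $x \in A$. Each term of $(\cL v)(x)$ is $v\bigl(\frac{x_k+1}{2}\bigr) / (2 J_k(x))$ with $x_k = f^{-k}(x)$.

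\begin{enumerate}
    \item \emph{Analyticity of the inverse branches.} Since $A \subset C \subset S_C$, Remark~\ref{rmk:NoWa} shows that $x \mapsto x_k$ is analytic on $A$. Lemma~\ref{lem:CtoB} gives $\frac{x_k+1}{2} \in A$. Hence each term is analytic on $A$, and $\bigl|v\bigl(\frac{x_k+1}{2}\bigr)\bigr| \le \|v\|_A$.
    \item \emph{Jacobian bound.} The denominator $J_k(x) = (f^k)'(x_k)$ is analytic and nonvanishing on $A$, being a product of factors $1 + (\gamma+1)2^\gamma x_j^\gamma$ whose real parts exceed $1$ because $x_j \in S_C$. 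Lemma~\ref{lem:Jp} gives $|J_k(x)| \ge J_k(p)$.
    \item \emph{Termwise estimate.} Combining the two steps,
    \[
        \Bigl| \frac{v\bigl(\frac{x_k+1}{2}\bigr)}{2 J_k(x)} \Bigr| \le \frac{\|v\|_A}{2 J_k(p)} .
    \]
    \item \emph{Summation and norm bound.} Note that $p$ is real and positive, by~\eqref{eq:p}, so $S(p) = \sum_k 1/J_k(p)$ with $\varphi \equiv 1$. The series $S(p)$ converges because $J_k(p) \sim k^{1+1/\gamma}$ (as in Corollary~\ref{cor:k_sum_A}), and $1 + 1/\gamma > 1$ for every $\gamma > 0$. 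The Weierstrass M-test then shows that the series for $\cL v$ converges uniformly on $A$, so $\cL v$ is analytic on $A$. Taking the supremum over $x \in A$ gives $\|\cL v\|_A \le \frac12 S(p) \|v\|_A$.
\end{enumerate}

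The main obstacle is already handled by Lemma~\ref{lem:Jp}; everything else is bookkeeping. One subtlety needs a brief check: Lemma~\ref{lem:Jp} is phrased for $x \in A$, and $A$ may contain points near $0$ only if $p$ is small. Since $p > 0$ by its definition in~\eqref{eq:p}, we have $J_k(p) \ge 1$ and the growth asymptotics apply, which justifies convergence. If the Hardy space is defined on the open ellipse with boundary values, the same bound extends to the boundary by continuity or the maximum principle.
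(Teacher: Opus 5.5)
Your proposal is correct and is essentially the paper's proof. You bound each term by $\|v\|_A/(2J_k(p))$, using Lemma~\ref{lem:CtoB} to keep the argument of $v$ inside $A$ and Lemma~\ref{lem:Jp} for the Jacobian, and then sum to get $\frac12 S(p)\|v\|_A$. Your extra justifications are details the paper leaves implicit: analyticity of the inverse branches on $A \subset C \subset S_C$, convergence of $S(p)$ via $J_k(p)\sim k^{1+1/\gamma}$, and the M-test showing that $\cL v \in H_A$.
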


\begin{proof}
    We have to bound $|\cL v(x)|$ when $x \in A$ and $|v| \leq 1$.
    Write
    \[
        \cL v(x)
        = \frac{1}{2} \sum_{k = 0}^\infty \frac{v((x_k+1) / 2)}{J_k(x)}
        .
    \]
    Using Lemma~\ref{lem:Jp},
    \[
        |\cL v(x)|
        \leq \frac{1}{2} \sum_{k = 0}^\infty \frac{1}{|J_k(x)|}
        \leq \frac{1}{2} \sum_{k = 0}^\infty \frac{1}{J_k(p)}
        = \frac{1}{2} S(p)
        ,
    \]
    as required.
\end{proof}

\begin{rmk}
    \label{rmk:LAC}
    A similar bound holds for $\|\cL\|_{A \to C}$, albeit with a different point $p$.
\end{rmk}

\begin{lemma}
    \label{lem:Liscpt}
    $\cL$ is a compact operator on $H_A$.
\end{lemma}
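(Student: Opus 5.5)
The plan is to factor $\cL$ on $H_A$ through the larger space $H_C$, using $A \subset C$ with $\overline{A}$ contained in the interior of $C$. We write $\cL = \iota \circ \cL_C$. Here $\cL_C \colon H_A \to H_C$ is given by the same formula~\eqref{eq:L}, now evaluated for $x \in C$, and $\iota \colon H_C \to H_A$ is restriction. Compactness of $\cL$ then follows from boundedness of $\cL_C$ and compactness of $\iota$, since the composition of a bounded operator with a compact one is compact.

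\textbf{Boundedness of $\cL_C$.} Take $x \in C$ and $v \in H_A$. By Lemma~\ref{lem:CtoB}, every point $(x_k+1)/2$ lies in $A$, so $|v((x_k+1)/2)| \le \|v\|_A$. It remains to show that $\sup_{x \in C} \sum_k |J_k(x)|^{-1} < \infty$. This is the content of Remark~\ref{rmk:LAC}: we repeat the proof of Lemma~\ref{lem:Jp} with $A$ replaced by $C$ and $p$ replaced by $p_C = [\inf_{z \in C^\gamma} \Re(z)]^{1/\gamma}$. The proof of Lemma~\ref{lem:Jp} only used that $\Re(z)$ is bounded below by a positive constant on the set in question. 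For $C$ this holds because $C \subset S_C$ is compact, so $\overline{C}^\gamma$ lies in the open right half-plane and $p_C > 0$. We then get $|J_k(x)| \ge J_k(p_C)$, and hence $\|\cL\|_{A\to C} \le \tfrac12 S(p_C)$. This is finite, since $J_k(p_C) \sim k^{1+1/\gamma}$ is summable for every $\gamma > 0$, as noted in Corollary~\ref{cor:logJ_sum_A}. Analyticity of $\cL v$ on $C$ follows because each term is analytic there (Remark~\ref{rmk:NoWa}) and, by the same bound, the series converges uniformly.

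\textbf{Compactness of $\iota$.} This is standard Montel theory. Let $(u_j)$ be bounded in $H_C$. The $u_j$ are then uniformly bounded on $C$, so by Montel's theorem some subsequence converges uniformly on compact subsets of the interior of $C$. Since $\overline{A}$ is such a compact subset, that subsequence converges in $\|\cdot\|_A$. A more quantitative route would expand $u_j$ in Chebyshev series: the coefficients decay like $\|u_j\|_C \rho_C^{-m}$ while $\|T_m\|_A \lesssim \rho_A^m$, so $\iota$ is a norm-limit of finite-rank truncations, with error $\lesssim \sum_{m>N} (\rho_A/\rho_C)^m$. This version is also useful later for the finite-matrix approximation.

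The main point to check is strict containment: $\overline{A}$ must lie in the interior of $C$, i.e.\ $\rho_A < \rho_C$. This comes from the formula $s_A = 1 + \tfrac12(\rho_C+\rho_C^{-1})$. With $\rho_C > 1$, comparing the semi-axes of $A$ with those of $2A-1 \supset C$ gives $\rho_A < \rho_C$. If that comparison turns out not to be immediate, we would instead factor through the slightly larger ellipse $C_+$, whose construction explicitly provides room between $A$ and it, and keep everything else in the argument unchanged.
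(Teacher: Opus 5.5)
Your argument is correct, and it is the standard mechanism behind the results the paper cites in place of a proof (Proposition~2.7 of BJ07 and Theorem~3.3 of BS20): factor $\cL=\iota\circ\cL_C$, where $\cL_C\colon H_A\to H_C$ is bounded by Lemma~\ref{lem:CtoB} and Remark~\ref{rmk:LAC}, and the restriction $\iota\colon H_C\to H_A$ is compact by Montel (or by Chebyshev truncation). The strict inclusion is immediate from $\rho_A+\rho_A^{-1}=s_A=1+\tfrac12(\rho_C+\rho_C^{-1})<\rho_C+\rho_C^{-1}$, so you need no fallback. The $C_+$ fallback would in fact fail as stated, because Lemma~\ref{lem:CtoB} sends $C$, not $C_+$, into $A$: the branch $x\mapsto(x+1)/2$ already pushes the right vertex of $C_+$ outside $A$.
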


\begin{proof}
    See~\cite[Proposition~2.7]{BJ07} or~\cite[Theorem~3.3]{BS20}.
\end{proof}

\begin{lemma}
    \label{lem:h}
    The operator $\cL \colon H_A \to H_A$ has $1$ as a maximal eigenvalue with multiplicity $1$,
    and a spectral gap.
    In particular, there exists a unique $h \in H_A$ so that $\cL h = h$ and $\int_{1/2}^1 h(x) \, dx = 1$.
\end{lemma}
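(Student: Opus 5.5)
The plan is to derive everything from three facts about the induced map $F$ (first return to $[1/2,1]$):
\begin{enumerate}[label=(\roman*)]
    \item $\cL$ is compact on $H_A$ (Lemma~\ref{lem:Liscpt});
    \item $\cL$ preserves Lebesgue integrals of real functions, $\int_{1/2}^1 \cL v \, dx = \int_{1/2}^1 v \, dx$, which holds because $\cL$ is the transfer operator of $F$ with respect to Lebesgue measure;
    \item all inverse branches $F_k^{-1}(x) = (f^{-k}(x)+1)/2$ are univalent on the larger ellipse $C$ and map it into $A$ (Remark~\ref{rmk:NoWa}, Lemma~\ref{lem:CtoB}).
\end{enumerate}
By compactness, the nonzero spectrum consists of isolated eigenvalues of finite multiplicity. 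So it suffices to show three things: the spectral radius is at most $1$, the only eigenvalue on the unit circle is $1$, and that eigenvalue is simple. The spectral gap then follows automatically.

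\emph{Power boundedness.} Since $A$ is compactly contained in $C$ and each inverse branch of $F^n$ is univalent on $C$, the Koebe distortion theorem gives a constant $K$, independent of $n$ and of the branch, such that $|(F^{-n}_\omega)'(x)| \leq K (F^{-n}_\omega)'(y)$ for all $x \in A$ and $y \in [1/2,1]$. Hence for $x \in A$,
\[
|\cL^n v(x)| \leq \|v\|_A \sum_\omega |(F^{-n}_\omega)'(x)| \leq K \|v\|_A \inf_{[1/2,1]} \cL^n 1 .
\]
Integrating over $[1/2,1]$, fact (ii) gives $\int_{1/2}^1 \cL^n 1 \, dx = 1/2$, so $\inf_{[1/2,1]} \cL^n 1 \leq 1$. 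Therefore $\sup_n \|\cL^n\|_{A \to A} \leq K$. This shows the spectral radius is at most $1$ and rules out Jordan blocks for any peripheral eigenvalue. Also $1$ is an eigenvalue, because $1$ lies in the spectrum of the dual operator: Lebesgue measure is a fixed point of it.

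\emph{Peripheral spectrum.} Suppose $\cL v = \lambda v$ with $|\lambda| = 1$ and $v \neq 0$. Positivity gives $|v| \leq \cL|v|$ on $[1/2,1]$, and both sides have the same integral by fact (ii), so $\cL|v| = |v|$. Consequently, at each point $x$ the triangle inequality
\[
\Bigl| \sum_k \frac{v(F_k^{-1}x)}{2J_k(x)} \Bigr| \leq \sum_k \frac{|v|(F_k^{-1}x)}{2J_k(x)}
\]
is an equality. Every branch is full, so all the numbers $v(F_k^{-1}x)$, $k \geq 0$, share a common argument $\theta(x)$. The density $|v|$ is positive everywhere, since $\cL$ of a nonnegative nonzero function is positive because every branch is onto. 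Then $v = e^{i\theta}|v|$ and $\lambda e^{i\theta(x)} = e^{i\theta(F_k^{-1}x)}$ for all $k$. Taking $k = 0$, whose branch $x \mapsto (x+1)/2$ has fixed point $1$, and using continuity of $\theta$, we get $\lambda = 1$ and $\theta$ constant.

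\emph{Simplicity and normalization.} For simplicity, apply a cone argument: every fixed point in the cone of positive functions equals $|v|$ up to phase, and two fixed points $h_1, h_2 > 0$ give $h_1 - c h_2$ with a sign change. The absolute-value argument above then forces that difference to vanish. Alternatively, use Birkhoff contraction of $\cL$ on the cone of positive log-Lipschitz functions, which holds by the same Koebe distortion bound. Finally, normalize $\int h = 1$.

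The main obstacle I expect is the uniform complex distortion constant $K$ over all $n$-fold compositions. I would obtain it from Koebe applied on $C$ with the modulus of the annulus $C \setminus A$, using univalence of all compositions from Remark~\ref{rmk:NoWa}. Alternatively, one can simply cite \cite[Theorem~11.4]{BJ07} for analytic full-branch Markov maps.
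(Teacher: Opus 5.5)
Your proof is correct, but it takes a genuinely different route from the paper. The paper does not redo Perron--Frobenius theory on $H_A$. By compactness, every nonzero spectral value of $\cL$ on $H_A$ is an eigenvalue. Restriction to $[1/2,1]$ is injective (identity theorem) and commutes with $\cL$. So eigenvalues and multiplicities on $H_A$ are dominated by those of the real operator on Lipschitz functions, whose simple leading eigenvalue $1$ and spectral gap the paper takes as known. It then shows that $1$ really is an eigenvalue on $H_A$ by contradiction: spectral radius $<1$ would give $\cL^n 2 \to 0$ uniformly on $A$, contradicting $\int_{1/2}^1 \cL^n 2 \, dx = 1$. This is the same mechanism as your argument that the Lebesgue functional is fixed by the dual.

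The paper's argument is two lines but imports the real spectral gap, which itself rests on distortion estimates for the induced map. Yours is self-contained: it uses only compactness, positivity, conservation of the integral, the full-branch structure, and the fixed point $1$ of the branch $x \mapsto (x+1)/2$. The Koebe step additionally yields $\sup_n \|\cL^n\|_{A \to A} \leq K$. That step, which you flag as the main obstacle, is not actually needed:
\begin{itemize}
    \item since the spectrum is discrete, $|\lambda| \leq 1$ follows by integrating $|\lambda|\,|v| \leq \cL|v|$ over $[1/2,1]$, with $\int_{1/2}^1 |v|\,dx > 0$ by the identity theorem;
    \item a Jordan block at $1$ is excluded because $(\cL - I)w = v$ with $\cL v = v$ forces $\int_{1/2}^1 v \, dx = 0$, which is impossible for $v = c|v|$ with $|v| > 0$.
\end{itemize}

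One small correction. You claim that $\cL$ of any nonnegative nonzero function is positive because every branch is onto. This is false in general: take a bump supported inside a single cylinder. For $|v|$ it does hold, because $v$ is analytic and so has finitely many zeros on $[1/2,1]$, while the preimages $F_k^{-1}x$, $k \geq 0$, are infinitely many distinct points. Alternatively, use $|v| = \cL^n |v|$ together with shrinking $n$-cylinders.
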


\begin{proof}
    (See also~\cite[Proposition~5.4]{BJ07}.)

    Since the complex operator $\cL \colon H_A \to H_A$ is compact, its non-zero spectrum
    consists of isolated eigenvalues with finite multiplicity, thus it is a subset of the
    point spectrum of the real operator $\cL$ acting on Lipschitz functions on $[1/2, 1]$,
    and the multiplicities of the complex operator are at most the multiplicities of the
    real operator. The real operator has a single maximal eigenvalue $1$ and a spectral gap.

    It remains to show that $1$ is indeed an eigenvalue of the complex operator.
    This follows by contradiction: otherwise the spectral radius is strictly less than $1$,
    so $\cL^n v \to 0$ uniformly on $A$ for all $v \in H_A$, but
    $\int_{1/2}^1 (\cL^n v)(x) \, dx = 1$ for all $n$ when $v \equiv 2$.
\end{proof}

We index vectors and matrices starting from $0$.
Choose a large $N$.

Define $\tT_n(x) = T_n(4 x - 3)$ as scaled to $[1/2,1]$
Chebyshev polynomials.
Let
\begin{equation}
    \label{eq:tau}
    (\tau_0, \tau_1, \tau_2, \ldots )
    = (\tT_0 / 2, \tT_1, \tT_2, \ldots)
    .
\end{equation}
For an analytic function $v$, let $\bpi v$ denote its
interpolation on the linear space $\bW$
spanned by $\tau_0, \ldots, \tau_{N-1}$ given by the
discrete cosine transform.
That is,
\[
    \bpi v
    = \frac{2}{N} \sum_{k=0}^{N-1} \tau_k \sum_{n=0}^{N-1} \cos \Bigl[
        \frac{\pi (n + 1 / 2) k}{N}
    \Bigr]
    v (x_n)
    \quad \text{with} \quad
    x_n = \frac{1}{4} \cos \Bigl[ \frac{\pi (n + 1 / 2)}{N} \Bigr] + \frac{3}{4}
    .
\]
The action of $\bpi$ is not equivalent to truncating high index coordinates
in the basis $(\tau_0, \tau_1, \ldots)$. That is, generally
$\bpi(v_0, v_1, \ldots) \neq (v_0, \ldots, v_{N-1},0,\ldots)$.
Yet $\bpi$ is idempotent, and acts as an identity on $\bW$.

\begin{lemma}
    \label{lem:mnorm}
    For an operator $Q \colon \bW \to \bW$ with the corresponding matrix $Q$,
    \[
        \|Q\|_{A \to A}
        \leq K_{A,C} \, \| D_{\rho_C} Q D_{\rho_A}^{-1} \|_2
        ,
    \]
    where $\| \cdot \|_2$ is $L^2$ matrix norm (largest singular value),
    $D_\rho = \diag(d_0(\rho), d_1(\rho), \dots, d_{N-1}(\rho))$
    with $d_0(\rho) = 1/2$
    and $d_k(\rho) = \frac{1}{2} \sqrt{\rho^{2k} + \rho^{-2k}}$ for $k \geq 1$,
    and
    \[
        K_{A,C}
        = \biggl[ 1 + \sum_{k=1}^{N-1}
            \frac{(\rho_A^k + \rho_A^{-k})^2}{\rho_C^{2k} + \rho_C^{-2k}}
        \biggr]^{1/2}
        .
    \]
\end{lemma}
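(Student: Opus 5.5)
We bound $\|Qv\|_A$ from above by a weighted $\ell^2$ norm of the Chebyshev coefficients of $Qv$, and $\|v\|_A$ from below by the $L^2$ mean of $v$ on $\partial A$. The key fact is that this $L^2$ mean equals exactly $\|D_{\rho_A} v\|_2$ in coefficients.

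\emph{Setup.} Write $v = \sum_{k=0}^{N-1} v_k \tau_k \in \bW$ and $u = Qv$, with $u = (u_0,\ldots,u_{N-1})$ its coefficient vector. Parametrize the boundary of the Bernstein ellipse with parameter $\rho$ by
\[
    x(\theta) = \frac{3}{4} + \frac{1}{4} \cdot \frac{\rho e^{i\theta} + \rho^{-1} e^{-i\theta}}{2},
    \qquad \theta \in [0, 2\pi).
\]
On this curve the standard identity
\[
    \tT_k(x(\theta)) = \frac{\rho^k e^{ik\theta} + \rho^{-k} e^{-ik\theta}}{2}
\]
holds.

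\emph{Lower bound on $\|v\|_A$.} Take $\rho = \rho_A$ and expand $v(x(\theta))$ in the Fourier modes $e^{ij\theta}$. The constant mode is $v_0/2$, because $\tau_0 = \tT_0/2$. For $j \geq 1$, the mode $e^{ij\theta}$ has coefficient $v_j \rho_A^j/2$ and the mode $e^{-ij\theta}$ has coefficient $v_j \rho_A^{-j}/2$. These modes are distinct, so Parseval gives
\[
    \frac{1}{2\pi} \int_0^{2\pi} |v(x(\theta))|^2 \, d\theta
    = \frac{|v_0|^2}{4} + \sum_{k\geq1} |v_k|^2 \, \frac{\rho_A^{2k} + \rho_A^{-2k}}{4}
    = \|D_{\rho_A} v\|_2^2 .
\]
Since $v$ is analytic on $A$, this boundary mean is at most $\|v\|_A^2$ by the maximum principle. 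Hence
\[
    \|D_{\rho_A} v\|_2 \leq \|v\|_A .
\]

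\emph{Upper bound on $\|Qv\|_A$.} By the triangle inequality and the identity above, $\sup_A |\tau_0| = 1/2$ and $\sup_A |\tau_k| = (\rho_A^k + \rho_A^{-k})/2$ for $k \geq 1$. Multiply and divide each term by $d_k(\rho_C)$ and apply Cauchy--Schwarz:
\[
    \|Qv\|_A \leq \sum_k |u_k| \sup_A |\tau_k|
    \leq \Bigl[ \sum_k \Bigl( \frac{\sup_A |\tau_k|}{d_k(\rho_C)} \Bigr)^2 \Bigr]^{1/2} \|D_{\rho_C} u\|_2 .
\]
The $k=0$ ratio is $(1/2)/(1/2) = 1$. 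For $k \geq 1$ the squared ratio is
\[
    \frac{(\rho_A^k + \rho_A^{-k})^2/4}{(\rho_C^{2k} + \rho_C^{-2k})/4}
    = \frac{(\rho_A^k + \rho_A^{-k})^2}{\rho_C^{2k} + \rho_C^{-2k}} .
\]
So the bracketed factor is exactly $K_{A,C}$.

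\emph{Conclusion.} Finally,
\[
    \|D_{\rho_C} u\|_2 = \|D_{\rho_C} Q D_{\rho_A}^{-1} (D_{\rho_A} v)\|_2
    \leq \|D_{\rho_C} Q D_{\rho_A}^{-1}\|_2 \, \|D_{\rho_A} v\|_2 .
\]
Chaining this with the two bounds above gives the claim.

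The only step needing care is the exact Parseval identity. It relies on the modes $e^{\pm ik\theta}$ not overlapping across different $k$, and on the normalization $\tau_0 = \tT_0/2$ matching $d_0 = 1/2$. The rest is routine.
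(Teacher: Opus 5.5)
Your proposal is correct and matches the paper's proof essentially step for step. It uses the same three ingredients: the Parseval identity on $\partial A$ giving $\|D_{\rho_A} v\|_2 \le \|v\|_A$, the triangle inequality with Cauchy--Schwarz weighted by $d_k(\rho_C)$ giving $\|Qv\|_A \le K_{A,C}\|D_{\rho_C} Q v\|_2$, and the same chaining through $D_{\rho_C} Q D_{\rho_A}^{-1}$.
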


\begin{proof}
    Let $\|v\|_{\mathcal{H}^2(\rho)} = \|D_\rho v\|_2$
    be the Hardy-Hilbert norm.
    We use standard two bounds:
    \[
        \|v\|_{\mathcal{H}^2(\rho_A)}
        \leq \sup_{z \in A} |v(z)| = \|v\|_A
        \quad \text{and} \quad
        \|v\|_A
        \leq K_{A,C} \|D_{\rho_C} v\|_2
        .
    \]
    The desired result follows directly:
    \begin{align*}
        \|Qv\|_A
        & \leq K_{A,C} \|D_{\rho_C} Q v\|_2
        = K_{A,C} \| (D_{\rho_C} Q D_{\rho_A}^{-1}) D_{\rho_A} v \|_2
        \leq K_{A,C} \|D_{\rho_C} Q D_{\rho_A}^{-1}\|_2 \|D_{\rho_A} v\|_2
        \\
        &\leq K_{A,C} \|D_{\rho_C} Q D_{\rho_A}^{-1}\|_2 \|v\|_A
        .
    \end{align*}

    For completeness, we prove the two standard bounds.
    Parametrizing the boundary of $A$ as
    $z(\theta) = \frac{1}{8}(\rho_A e^{i\theta} + \rho_A^{-1} e^{-i\theta}) + \frac{3}{4}$ with $\theta \in [0, 2\pi)$,
    the Chebyshev polynomials take the form
    $\tT_k(z(\theta)) = \frac{1}{2}(\rho_A^k e^{ik\theta} + \rho_A^{-k} e^{-ik\theta})$.
    By Parseval's identity,
    \[
        \frac{1}{2\pi} \int_0^{2\pi} |v(z(\theta))|^2 \, d\theta
        = \frac{|v_0|^2}{4} + \sum_{k=1}^{N-1} \frac{|v_k|^2}{4} (\rho_A^{2k} + \rho_A^{-2k})
        = \|D_{\rho_A} v\|_2^2
        = \|v\|_{\mathcal{H}^2(\rho_A)}^2
        .
    \]
    The left hand side is bounded by $\|v\|_A^2$, and we obtain the first bound
    $\|v\|_A \geq \|v\|_{\mathcal{H}^2(\rho_A)}$.

    Next, using the triangle inequality and
    $\sup_{z \in A} |\tilde{T}_k(z)| = \frac{1}{2}(\rho_A^k + \rho_A^{-k})$, we bound for $z \in A$
    \[
        |v(z)|
        \leq \frac{|v_0|}{2} + \sum_{k=1}^{N-1} |v_k| \frac{\rho_A^k + \rho_A^{-k}}{2}
        .
    \]
    Applying the Cauchy-Schwarz inequality with the weights $\sqrt{\rho_C^{2k} + \rho_C^{-2k}}$,
    we obtain:
    \begin{align*}
        |v(z)|
        & \leq \frac{|v_0|}{2} \cdot 1
        + \sum_{k=1}^{N-1}
        |v_k| \frac{\sqrt{\rho_C^{2k} + \rho_C^{-2k}}}{2}
        \cdot
        \frac{\rho_A^k + \rho_A^{-k}}{\sqrt{\rho_C^{2k} + \rho_C^{-2k}}}
        \\
        & \leq \biggl[
            \frac{|v_0|^2}{4}
            + \sum_{k=1}^{N-1} |v_k|^2 \frac{\rho_C^{2k} + \rho_C^{-2k}}{4}
        \biggr]^{1/2}
        \biggl[
            1
            + \sum_{k=1}^{N-1} \frac{(\rho_A^k + \rho_A^{-k})^2}{\rho_C^{2k} + \rho_C^{-2k}}
        \biggr]^{1/2}
        = \|D_{\rho_C} v\|_2 \, K_{A,C}
        .
    \end{align*}
    Taking the supremum over $z \in A$ yields the second bound $\|v\|_A \leq K_{A,C} \|D_{\rho_C} v\|_2$.
\end{proof}

\begin{rmk}
    To compute $\| D_{\rho_C} Q D_{\rho_A}^{-1} \|_2$ in interval
    arithmetic, the intervals defining the matrix elements should not
    be too large. The bottom left corner of $D_{\rho_C} Q D_{\rho_A}^{-1}$ is expected
    to have intervals of width \emph{roughly}
    \[
        \rho_C^N
        \bigl[
            1 + x + \sqrt{x^2 + 2 x}
        \bigr]^N
        \biggl[
            \Bigl[ \frac{L}{\pi e \nu \varkappa_1} \Bigr]^{2 L}
            +
            e^{-M}
        \biggr]
        , \quad \text{where} \quad
        x = 2 r_1^{- 1 / \gamma}
        .
    \]
    We have to choose $L$, $\nu$ and $M$ to balance the exponential factor.
    Additional interval width may come from the evaluation of
    the Abel function.
\end{rmk}

\begin{lemma}
    \label{lem:prop3.2}
    \[
        \| I - \bpi \|_{C \to A}
        \leq \frac{ 8 (\rho_A / \rho_C)^{N-1} }{ \log (\rho_C / \rho_A)}
        ,
    \]
    and similar for other pairs of ellipses.
\end{lemma}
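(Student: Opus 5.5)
The plan is to use the standard aliasing analysis of Chebyshev interpolation and convert it into a sup-norm bound on the ellipse $A$.

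Let $v \in H_C$ with $\|v\|_C \le 1$. Pull everything back to $[-1,1]$ via $z \mapsto z/4+3/4$, and write $v = \sum_{k\ge0} v_k \tau_k$ as a Chebyshev series. The Cauchy integral over the boundary of $C$ in the Joukowski variable gives the coefficient bound
\[
|v_k| \le 2\rho_C^{-k}\|v\|_C .
\]
This holds first on ellipses slightly inside $C$, and then passes to $C$ by a limit.

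Next, $\bpi$ interpolates at the Chebyshev points $x_n$, so we use the aliasing identities $T_k(x_n) = \pm T_{k'}(x_n)$. These say that for $k \ge N$, $\bpi \tau_k = \pm \tau_{k'}$, where $k'$ is the reflection of $k$ into $\{0,\dots,N-1\}$: $k' = |2jN - k|$ for the appropriate $j$, with $\bpi\tau_k = 0$ when $k$ is an odd multiple of $N$. Since $\bpi$ is the identity on $\bW$,
\[
(I - \bpi)v = \sum_{k\ge N} v_k(\tau_k - \bpi\tau_k).
\]
On $A$ we have $|\tau_k| \le \tfrac12(\rho_A^k + \rho_A^{-k}) \le \rho_A^k$ and $|\bpi\tau_k| \le \rho_A^{k'} \le \rho_A^{k}$. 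Hence
\[
\|(I-\bpi)v\|_A \le 2\sum_{k\ge N} 2\rho_C^{-k}\rho_A^k = 4\,\frac{(\rho_A/\rho_C)^N}{1-\rho_A/\rho_C}.
\]

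The last step is to reach the stated form. Setting $u = \log(\rho_C/\rho_A)$, we have $1 - e^{-u} \ge u e^{-u}$, so
\[
\frac{(\rho_A/\rho_C)^N}{1-\rho_A/\rho_C} \le \frac{(\rho_A/\rho_C)^{N-1}}{\log(\rho_C/\rho_A)},
\]
which gives the bound with constant $4$, within the claimed $8$. The slack of $2$ covers a cruder accounting, for example bounding the $\tau_0$ normalization or using $|\tau_k| \le \rho_A^k + \rho_A^{-k}$. The same argument applies verbatim to any pair of nested Bernstein ellipses, which gives the "similar for other pairs" clause.

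The main obstacle is the precise bookkeeping of aliasing. We have to check that every aliased index satisfies $k' \le k$, so that $\rho_A^{k'} \le \rho_A^k$. We also have to handle the halved $\tau_0$ and the sign conventions correctly, including the fact that the normalization $\frac{2}{N}\sum$ gives $\tau_0$ the weight $1/2$. Once those are settled, the geometric sum and the elementary inequality finish the proof.
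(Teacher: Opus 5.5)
Your proof is correct. It differs from the paper mainly in that the paper gives no argument here: it simply cites \cite[Proposition~3.2]{VW25}.

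You instead derive the bound directly from three ingredients:
\begin{enumerate}
\item the coefficient decay $|v_k| \le 2\rho_C^{-k}\|v\|_C$, which also holds for $k=0$ because of the normalisation $\tau_0 = \tT_0/2$;
\item the aliasing identities $\tT_{2mN\pm j}(x_n) = (-1)^m \tT_j(x_n)$ and $\tT_{(2m+1)N}(x_n) = 0$ at the nodes of $\bpi$, which is the same aliasing formula the paper itself uses in Lemma~\ref{lem:Q_apriori};
\item the inequality $e^u - 1 \geq u$, which turns $q^N/(1-q)$ into $q^{N-1}/\log(1/q)$ for $q = \rho_A/\rho_C$.
\end{enumerate}

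The citation buys brevity. Your route buys self-containedness and the sharper constant $4$ in place of $8$. It also explicitly checks that the paper's conventions match: first-kind nodes via the DCT, the halved $\tau_0$, and sup norms on ellipses. In particular it covers the degenerate target $U = [1/2,1]$ with $\rho_U = 1$, which is the case actually needed in Lemma~\ref{lem:uiota}.

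Two minor points. First, for $k = 2mN$ one has $\bpi\tau_k = \pm\tT_0 = \pm 2\tau_0$, not $\pm\tau_0$. Since you only use $|\bpi\tau_k| \le 1 = \rho_A^0$ on $A$, nothing changes. Second, the bookkeeping you call the main obstacle is immediate. We have $k' \le N-1 < N \le k$. The interchange $\bpi v = \sum_k v_k \bpi\tau_k$ holds because $\bpi$ uses only finitely many nodal values and the Chebyshev series converges absolutely on $[1/2,1]$.
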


\begin{proof}
    See~\cite[Proposition~3.2]{VW25}.
\end{proof}

\begin{lemma}
    \label{lem:Q_apriori}
    Recall the ellipses $A_+$ and $C_+$.
    Let $Q$ be the $N \times N$ matrix representation of the operator
    $\bpi \cL$ acting on the finite-dimensional space $\bW$ spanned
    by $\tau_0, \dots, \tau_{N-1}$. Then
    \[
        |Q_{j,k}|
        \leq \frac{
            \bigl(
                \rho_{A_+}^k + \rho_{A_+}^{-k}
            \bigr)
            \bigl(
                \rho_{C_+}^{-j} + \rho_{C_+}^{j - 2N}
            \bigr)
        }{1 - \rho_{C_+}^{-2N}}
        \|\cL\|_{A_+ \to C_+}
        .
    \]
\end{lemma}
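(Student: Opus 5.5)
We bound the entry $Q_{j,k}$, i.e.\ the $j$-th coefficient of $\bpi \cL \tau_k$, in two steps. The factor $\rho_{A_+}^k+\rho_{A_+}^{-k}$ comes from the size of $\tau_k$ on $A_+$. The factor $(\rho_{C_+}^{-j}+\rho_{C_+}^{j-2N})/(1-\rho_{C_+}^{-2N})$ is the standard aliasing bound for Chebyshev interpolation coefficients of a function analytic on the Bernstein ellipse $C_+$.

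\textbf{Step 1.} Since $\tau_k$ is a scaled Chebyshev polynomial, on the ellipse $A_+$ with parameter $\rho_{A_+}$ we have $\sup_{A_+}|\tT_k| = \frac12(\rho_{A_+}^k+\rho_{A_+}^{-k})$, as in the proof of Lemma~\ref{lem:mnorm}. Hence
\[
    \|\cL \tau_k\|_{C_+} \le \|\cL\|_{A_+\to C_+}\,\tfrac12(\rho_{A_+}^k+\rho_{A_+}^{-k}).
\]
For $k=0$ the factor $1/2$ in $\tau_0$ only helps. The function $g=\cL\tau_k$ is therefore analytic in $C_+$ with this sup bound.

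\textbf{Step 2.} Expand $g=\sum_{m\ge0}{}' g_m \tT_m$ in Chebyshev series. By Cauchy's formula on the ellipse, after transforming to the Joukowski circle, $|g_m|\le 2\|g\|_{C_+}\rho_{C_+}^{-m}$; passing to the limit ellipse inside $C_+$ is harmless since $C_+$ is a closed ellipse on which $g$ is analytic. The DCT interpolant at the $N$ Chebyshev points aliases modes. With $\tT_m$ evaluated at the nodes $\cos(\pi(n+\frac12)/N)$, the mode $m=2lN\pm j$ maps to $\pm(-1)^l\,\tT_j$ (up to the $\tau_0$ normalization), and the mode $m=(2l+1)N$ vanishes. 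Thus the $j$-th coefficient of $\bpi g$ is a signed sum $g_j+\sum_{l\ge1}\pm(g_{2lN-j}+g_{2lN+j})$. Bounding each term gives
\[
    |(\bpi g)_j| \le 2\|g\|_{C_+}\sum_{l\ge0}\bigl(\rho_{C_+}^{-j-2lN}+\rho_{C_+}^{j-2(l+1)N}\bigr)
    = 2\|g\|_{C_+}\,\frac{\rho_{C_+}^{-j}+\rho_{C_+}^{j-2N}}{1-\rho_{C_+}^{-2N}}.
\]
Combining this with Step 1 cancels the $2$ against the $\frac12$ and yields the stated bound.

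The main obstacle is bookkeeping in Step 2: getting the aliasing pattern for the half-integer (Chebyshev–Gauss) nodes right, including the signs and the vanishing odd multiples of $N$, and the normalization conventions for $\tau_0$ and for $j=0$. The $1/2$ factors in $\tau_0$ and $d_0$ must be checked so that the constant comes out exactly as stated rather than off by a factor of $2$. If the $j=0$ case produces an extra factor, I would absorb it using the convention $\tau_0=\tT_0/2$, which doubles the coefficient.
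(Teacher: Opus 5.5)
Your proposal is correct and follows the paper's proof essentially step for step. You bound $\|\cL\tau_k\|_{C_+}\le \frac12(\rho_{A_+}^k+\rho_{A_+}^{-k})\|\cL\|_{A_+\to C_+}$, use $|c_m|\le 2\rho_{C_+}^{-m}\|\cL\tau_k\|_{C_+}$ for the coefficients in the basis $(\tau_m)$, and sum the aliasing formula $Q_{j,k}=c_j+\sum_{m\ge1}(-1)^m(c_{2mN-j}+c_{2mN+j})$ as a geometric series. Your worry about $j=0$ is unnecessary: since $\tau_{2lN}=2(-1)^l\tau_0$ at the nodes, the formula with $c_{2lN-0}+c_{2lN+0}=2c_{2lN}$ is already exact in the $\tau$-basis (and the sign is $(-1)^l$ for both $2lN\pm j$, not $\pm(-1)^l$), so the constant comes out exactly as stated.
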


\begin{proof}
    Fix $k$ and suppose $v = \cL \tau_k = \sum_{m \geq 0} c_m \tau_m$.
    Since $v$ is analytic on $C_+$,
    we have $|c_m| \leq 2 \rho_{C_+}^{-m} \|v\|_{C_+}$.

    By the standard aliasing formula,
    see e.g.~\cite[Corollary 1]{X16},
    \[
        Q_{j,k}
        = c_j + \sum_{m = 1}^\infty (-1)^m (c_{2mN - j} + c_{2mN + j})
        .
    \]
    Then
    \[
        |Q_{j,k}|
        \leq 2 \|v\|_{C+} \biggl[
            \rho_{C_+}^{-j}
            + \sum_{m=1}^\infty \Bigl(
                \rho_{C_+}^{-(2mN-j)} + \rho_{C_+}^{-(2mN+j)}
            \Bigr)
        \biggr]
        = 2 \|v\|_{C+}
        \frac{
            \rho_{C_+}^{-j} + \rho_{C_+}^{-(2N-j)}
        }{
            1 - \rho_{C_+}^{-2N}
        }
        .
    \]
    It remains to notice that
    \[
        \|v\|_{C_+}
        = \|\cL \tau_k\|_{C_+}
        \leq \|\cL\|_{A_+ \to C_+} \|\tau_k\|_{A_+}
        \leq \frac{1}{2} \|\cL\|_{A_+ \to C+} (\rho_{A_+}^k + \rho_{A_+}^{-k})
        .
    \]
\end{proof}

Construct $\iota = (\iota_0, \iota_1, \ldots)$, $\iota_k \in \R$, from
the integrals of the basis Chebyshev polynomials
on $[1/2, 1]$:
\[
    \iota = \int_{[1/2,1]} (\tau_0(x), \tau_1(x), \ldots) \, dx
    .
\]
This way, for $v = \sum_k v_k \tau_k$, or $v = (v_0, v_1, \ldots)$,
\[
    \iota^T v
    = \int_{[1/2,1]} v(x) \, dx
    .
\]
Note also that $\iota_0 = 1/4$.

Let $u = (\iota_0^{-1}, 0, \ldots)$,
and define $(h_0, h_1, \ldots)  = (I - \cL + u \iota^T)^{-1} u$ and
$h = \sum_{k \geq 0} h_k \tau_k$.
\begin{lemma}
    $h$ is well defined, analytic on $A$, invariant under $\cL$,
    and $\int_{[1/2,1]} h(x) \, dx = 1$.
\end{lemma}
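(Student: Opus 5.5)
We will show that $I - \cL + u\iota^T$ is invertible on $H_A$, and that the vector $(h_0,h_1,\ldots)$ it produces is exactly the normalized eigenfunction $h_*$ of Lemma~\ref{lem:h}. Here $u$ is the coefficient vector of the constant function $\iota_0^{-1}\tau_0 = 2\cdot 4 \cdot \tfrac12 = 4$, wait: $\tau_0 = \tT_0/2 = 1/2$, so $u = \iota_0^{-1}\tau_0 \equiv 2$. Then $u\iota^T$ is the rank-one operator $v \mapsto 2\int_{1/2}^1 v\,dx$. Note that $\iota^T u = \iota_0\iota_0^{-1} = 1$, i.e.\ $\int_{1/2}^1 2\,dx = 1$. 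Everything is interpreted as operators on $H_A$: the sequence notation is just the Chebyshev coordinate representation, and the $\tau_k$ form a basis in which functions analytic on $A$ are expanded.

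\emph{Step 1: the candidate solves the equation.} Take $h_*$ from Lemma~\ref{lem:h}, so $\cL h_* = h_*$ and $\iota^T h_* = 1$. Then
\[
(I - \cL + u\iota^T)h_* = h_* - h_* + u \cdot 1 = u .
\]
So a solution exists, and it lies in $H_A$.

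\emph{Step 2: uniqueness.} This is the main point. Since $\cL$ is compact on $H_A$ by Lemma~\ref{lem:Liscpt}, the operator $I - \cL + u\iota^T$ is the identity plus a compact operator. By the Fredholm alternative, it is invertible as soon as it is injective. Suppose $(I - \cL + u\iota^T)v = 0$, and integrate over $[1/2,1]$. The transfer operator preserves Lebesgue integrals, $\iota^T\cL v = \iota^T v$, since $\cL$ is the Lebesgue transfer operator of the first return map on $[1/2,1]$. Integrating therefore gives
\[
\iota^T v - \iota^T v + (\iota^T u)(\iota^T v) = \iota^T v = 0 .
\]
Substituting back, $v = \cL v$. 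By Lemma~\ref{lem:h} the eigenvalue $1$ is simple, so $v = c h_*$. Then $0 = \iota^T v = c$, hence $v = 0$.

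Hence the inverse exists as a bounded operator on $H_A$. The vector $(h_0,h_1,\ldots) = (I-\cL+u\iota^T)^{-1}u$ is the coefficient sequence of $h_*$, so $h = \sum_k h_k\tau_k$ converges on $A$ and equals $h_*$. Consequently $h$ is analytic on $A$, satisfies $\cL h = h$, and $\int_{[1/2,1]} h\,dx = 1$.

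The step needing most care is the integral-preservation identity $\iota^T\cL = \iota^T$ on $H_A$. It is the standard duality $\int \cL v \cdot 1 = \int v \cdot (1\circ F)$ for the induced map $F$, valid for integrable $v$, and functions in $H_A$ are bounded on $[1/2,1]$. The other subtle point is that "multiplicity $1$" in Lemma~\ref{lem:h} means geometric simplicity. That is all the uniqueness step uses, since it only needs the kernel of $I-\cL$ to be one-dimensional.
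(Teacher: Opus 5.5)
Your proof is correct and follows the paper's argument. You apply the Fredholm alternative to $I-(\cL-u\iota^T)$, get injectivity by applying $\iota^T$ (using $\iota^T\cL=\iota^T$ and $\iota^T u=1$) and invoking simplicity of the eigenvalue $1$ from Lemma~\ref{lem:h}. The one difference is that you identify $h$ with the normalized eigenfunction of Lemma~\ref{lem:h} by checking that it solves the equation, whereas the paper derives $\iota^T h=1$ and $\cL h=h$ directly by applying $\iota^T$ to $(I-\cL+u\iota^T)h=u$; also tidy up the false start in computing $u$, which is simply $4\tau_0\equiv 2$.
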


\begin{proof}
    By Lemma~\ref{lem:Liscpt}, $\cL$ is a compact operator on $H_A$. Since $u \iota^T$ is a rank-one operator, $\cL - u\iota^T$ is also compact. By the Fredholm alternative, $I - (\cL - u\iota^T)$ is invertible provided 1 is not an eigenvalue of $\cL - u\iota^T$.

    To show this, we suppose for contradiction that $(\cL - u\iota^T)v = v$ for some $v \neq 0$. Applying the integration functional $\iota^T$ to both sides, and using the properties $\iota^T \cL = \iota^T$ and $\iota^T u = 1$, we obtain $\iota^T v = 0$. Substituting $\iota^T v = 0$ back into the eigenvalue equation yields $\cL v = v$. By Lemma~\ref{lem:h}, $\cL$ has $1$ as a maximal eigenvalue with multiplicity $1$, thus $v$ must be proportional to the invariant density. However, $\iota^T v = 0$ forces $v = 0$, a contradiction. Thus, $(I - \cL + u\iota^T)$ is invertible on $H_A$, making $h = (I - \cL + u\iota^T)^{-1} u$ well defined. Since the constant vector $u \in H_A$, it follows that $h \in H_A$, ensuring $h$ is analytic on $A$.

    To verify the properties of $h$, we apply $\iota^T$ to the equation $(I - \cL + u\iota^T)h = u$:
    \[
        \iota^T h - \iota^T \cL h + (\iota^T u)(\iota^T h) = \iota^T u
        .
    \]
    Using $\iota^T \cL = \iota^T$ and $\iota^T u = 1$, the first two terms cancel, simplifying to $\iota^T h = 1$, which is exactly $\int_{[1/2,1]} h(x) \, dx = 1$.

    Finally, substituting $\iota^T h = 1$ back into the equation $(I - \cL + u\iota^T)h = u$ yields $\cL h = h$, completing the proof.
\end{proof}

\begin{lemma}
    \label{lem:uiota}
    \[
        \| u \iota^T (I - \bpi) \|_{A \to A}
        \leq \frac{8 \rho_A^{-(N-1)}}{\log \rho_A}
        .
    \]
\end{lemma}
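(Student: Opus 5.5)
The plan is to reduce the statement to two scalar facts: a bound on $\|u\|_A$ and a bound on $|\iota^T (I-\bpi) v|$. For $v \in H_A$, the vector $u \iota^T (I - \bpi) v$ equals the constant function $\iota_0^{-1}\tau_0 \cdot \int_{1/2}^1 ((I-\bpi)v)(x)\,dx$. Since $\tau_0 = \tT_0/2 = 1/2$ and $\iota_0 = 1/4$, this constant function is $2 \int_{1/2}^1 (v - \bpi v)(x)\,dx$. Its $A$-norm is therefore $2\,|\int_{1/2}^1 (v-\bpi v)\,dx|$, and it suffices to show
\[
    \Bigl| \int_{1/2}^1 (v - \bpi v)(x)\,dx \Bigr| \le \frac{4 \rho_A^{-(N-1)}}{\log \rho_A} \|v\|_A .
\]

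To get this, expand $v = \sum_{m\ge0} c_m \tau_m$ in the scaled Chebyshev basis. Since $v$ is analytic on $A$, the coefficients satisfy $|c_m| \le 2\rho_A^{-m}\|v\|_A$, which is the same estimate used in the proof of Lemma~\ref{lem:Q_apriori}. By the aliasing formula cited there, $(I-\bpi)v$ has coefficients $c_m$ for $m\ge N$, while for $j<N$ its coefficients are minus the aliased sums $\sum_{m\ge1}(-1)^m(c_{2mN-j}+c_{2mN+j})$. The $\tau_0$ coefficient involves only the indices $2mN$, counted twice.

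Next, integrate term by term. Since $\int_{1/2}^1 \tT_m\,dx = \tfrac14\int_{-1}^1 T_m = \tfrac14 \cdot \frac{2}{1-m^2}$ for even $m$ and $0$ for odd $m$, only even indices contribute, each with weight at most about $\tfrac{1}{2(m^2-1)}$. This leaves a geometric tail $\sum_{m \ge N} \rho_A^{-m}\cdot O(1/m^2)$ plus aliased terms of the same size. A cruder but cleaner route is to use $|\int_{1/2}^1 \tT_m| \le 1/2$ and write the tail as $\sum_{m\ge N} \rho_A^{-m} \le \int_{N-1}^\infty \rho_A^{-s}ds = \rho_A^{-(N-1)}/\log\rho_A$, which produces exactly the $\log\rho_A$ in the denominator.

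The main obstacle is bookkeeping the constant $8$: the aliased contributions double the count, and the factor $2$ from $|c_m|$ and the factor $2$ from $\iota_0^{-1}\tau_0$ must be combined to give exactly $8$. An alternative, which may match the authors' intent, is to imitate \cite[Proposition~3.2]{VW25} (Lemma~\ref{lem:prop3.2}) with $\rho_C$ replaced by $\rho_A$ and the sup norm replaced by the integral functional. The estimate $|\int_{1/2}^1 w| \le \tfrac12\|w\|_{[1/2,1]}$ gains a factor compared to the sup over an ellipse. In the limit where the target ellipse degenerates to $[1/2,1]$, i.e.\ $\rho=1$, that proposition's bound is $8\rho_A^{-(N-1)}/\log\rho_A$. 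Multiplying by $\tfrac12$ from the interval length and by $2 = \|u\|_A$ then gives the claim directly, and this is the route I would try first.
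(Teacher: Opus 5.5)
Your argument is correct, and the route you say you would try first is exactly the paper's proof: it bounds $\|u\iota^T(I-\bpi)\|_{A\to A}$ by $\|u\|_A\,\|\iota^T\|_{U\to\R}\,\|I-\bpi\|_{A\to U}$, where $U=[1/2,1]$ is the degenerate ellipse with $\rho_U=1$, and then uses $\|u\|_A=2$, $\|\iota^T\|_{U\to\R}=1/2$ and Lemma~\ref{lem:prop3.2}. Your coefficient-based alternative also closes with no delicate bookkeeping: after aliasing, each $c_m$ with $m\ge N$ contributes at most $|c_m|(\tfrac12+\tfrac12)$ to $\int_{1/2}^1 (v-\bpi v)$, which gives the bound even with $4$ in place of $8$.
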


\begin{proof}
    Let $U = [1/2,1]$ be the degenerate ellipse with parameter $\rho_U = 1$. Using Lemma~\ref{lem:prop3.2},
    $\|u\|_A \equiv 2$ and $\|\iota^T\|_{U \to \R} = 1/2$,
    \[
        \| u \iota^T (I - \bpi) \|_{A \to A}
        \leq \|u\|_A \|\iota^T\|_{U \to \R} \|I - \bpi\|_{A \to U}
        \leq \frac{8 \rho_A^{-(N-1)}}{\log \rho_A}
        .
    \]
\end{proof}

Define
\[
    \bh = (\bh_0, \ldots, \bh_{N - 1}, 0, \ldots)
    = \bigl(I - \bpi (\cL - u \iota^T) \bpi \bigr)^{-1} u
\]
and of course $\bh(x) = \sum_{k = 0}^{N - 1} \bh_k \tau_k(x)$.
Let
\begin{align*}
    \Delta = \cL - u \iota^T
    , \quad
    \bDelta = \bpi (\cL - u \iota^T) \bpi
    .
\end{align*}
Naturally,
\[
    \|\Delta\|_{A \to C}
    \leq 1 + \|\cL\|_{A \to C}
    .
\]
Using Lemmas~\ref{lem:prop3.2} and~\ref{lem:uiota},
we obtain an explicit bound
\[
    \| \Delta - \bDelta \|_{\bW \to A}
    \leq \| I - \bpi \|_{C \to A} \| \cL \|_{A \to C}
    .
\]

Let $R \colon \bW \to \bW$ be an approximate inverse of $I - \bDelta$.
That is, $R$ is an arbitrary matrix but for the following to work
it has to be close to $(I - \bDelta)^{-1}$.
Let $Z = I + R \bpi \Delta$ and $E = I - (I - \Delta) Z$.

\begin{lemma}
    \label{lem:I-Delta}
    If $\|E\|_{A \to A} < 1$, then
    \[
        \| (I - \Delta)^{-1} \|_{A \to A}
        \leq \frac{\|Z\|_{A \to A}}{1 - \|E\|_{A \to A}}
        .
    \]
    Further,
    \begin{align*}
        \|Z\|_{A \to A}
        & \leq 1 + \|R\|_{A \to A} \|\bpi\|_{C \to A} \|\Delta\|_{A \to C}
        , \\
        \|E\|_{A \to A}
        & \leq \| I - (I - \bDelta) R \|_{A \to A} \|\bpi\|_{C \to A}
        \|\Delta\|_{A \to C}
        + \|I - \bpi\|_{C \to A} \|\cL\|_{A \to C} \|Z\|_{A \to A}
        .
    \end{align*}
\end{lemma}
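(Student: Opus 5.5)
The argument is a standard Neumann-series, approximate-inverse argument on $H_A$, followed by two triangle-inequality computations for $Z$ and $E$.

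\emph{Inverse bound.} By definition $E = I - (I-\Delta)Z$, so $(I-\Delta)Z = I - E$. If $\|E\|_{A\to A}<1$, then $I-E$ is invertible on $H_A$ by the Neumann series, with $\|(I-E)^{-1}\|_{A\to A}\le (1-\|E\|_{A\to A})^{-1}$. Hence $Z(I-E)^{-1}$ is a right inverse of $I-\Delta$. Since $\Delta$ is compact (Lemma~\ref{lem:Liscpt} plus a rank-one term), $I-\Delta$ is Fredholm of index zero. A right inverse makes it surjective, hence injective, hence invertible. (Alternatively, invertibility was already shown in the lemma defining $h$.) Therefore $(I-\Delta)^{-1} = Z(I-E)^{-1}$, and the first claimed bound follows from submultiplicativity.

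\emph{Bound on $Z$.} Write $Z = I + R\bpi\Delta$ and factor $A\to C\to A\to A$:
\[
\|Z\|_{A\to A} \le 1 + \|R\|_{A\to A}\|\bpi\|_{C\to A}\|\Delta\|_{A\to C}.
\]
Here $R$ acts on $\bW$, and its $A\to A$ norm is computed via Lemma~\ref{lem:mnorm}.

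\emph{Bound on $E$.} Expand:
\[
E = I - (I-\Delta)(I+R\bpi\Delta) = \Delta - R\bpi\Delta + \Delta R\bpi\Delta = \bigl(I - (I-\Delta)R\bigr)\bpi\Delta + (I-\bpi)\Delta,
\]
using $\Delta = \bpi\Delta + (I-\bpi)\Delta$. Next replace $\Delta$ by $\bDelta$ inside the first term. On $\bW$ we have $\bpi = I$, so $\Delta R = \bDelta R + (I-\bpi)\Delta R$ on the range of $R$, because $\bpi\Delta\bpi R = \bpi\Delta R$ there. This gives
\[
E = \bigl(I - (I-\bDelta)R\bigr)\bpi\Delta + (I-\bpi)\Delta\bigl(I + R\bpi\Delta\bigr) = \bigl(I-(I-\bDelta)R\bigr)\bpi\Delta + (I-\bpi)\Delta Z.
\]
Finally, note $(I-\bpi)\Delta = (I-\bpi)\cL$, since $u$ is constant and $\bpi u = u$. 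Then $\|(I-\bpi)\cL Z\|_{A\to A}\le \|I-\bpi\|_{C\to A}\|\cL\|_{A\to C}\|Z\|_{A\to A}$. The first term is bounded by $\|I-(I-\bDelta)R\|_{A\to A}\|\bpi\|_{C\to A}\|\Delta\|_{A\to C}$, which gives exactly the stated bound on $E$.

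\emph{Main obstacle.} The only delicate step is the algebraic rearrangement of $E$: one must track carefully where $\bpi$ acts as the identity on $\bW$, and check that the $u\iota^T$ part is annihilated by $I-\bpi$. I expect both identities to work as written.
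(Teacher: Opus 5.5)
Your proof is correct and takes essentially the same route as the paper. The paper also writes $(I-\Delta)^{-1} = Z(I-E)^{-1}$ and bounds $\|E\|_{A\to A}$ through the identical decomposition $E = \bigl(I-(I-\bDelta)R\bigr)\bpi\Delta + (I-\bpi)\Delta Z$. You additionally make explicit two points the paper uses tacitly: that the right inverse $Z(I-E)^{-1}$ is a genuine two-sided inverse (by Fredholm index zero, or by the earlier invertibility of $I-\cL+u\iota^T$), and that $(I-\bpi)\Delta=(I-\bpi)\cL$ because $u$ is constant and hence fixed by $\bpi$.
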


\begin{proof}
    Write $(I - \Delta) Z = I - E$. Then
    $(I - \Delta)^{-1} = Z (I - E)^{-1}$ and the bound on
    $\|(I - \Delta)^{-1}\|_{A \to A}$ is immediate.
    The bound on $\|Z\|_{A \to A}$ is also immediate, and
    the bound on $\|E\|_{A \to A}$ follows from
    \[
        E
        = \bigl( I - (I - \bDelta)R \bigr) \bpi \Delta + (I - \bpi) \Delta Z
        .
    \]
\end{proof}

We are now ready for the main result of this section:
\begin{thm}
    \label{thm:hind}
    \[
        \|h - \bh\|_{A}
        \leq C_{h,A}
        , \quad \text{where} \quad
        C_{h,A}
        = \| (I - \Delta)^{-1} \|_{A \to A}
        \| \Delta - \bDelta \|_{\bW \to A}
        \| \bh \|_A
         .
    \]
\end{thm}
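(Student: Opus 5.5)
This is a standard perturbation argument comparing the two fixed-point equations for $h$ and $\bh$. By definition $h = (I - \Delta)^{-1} u$, since $I - \cL + u\iota^T = I - \Delta$. Similarly, $\bh = (I - \bDelta)^{-1} u$. The plan is to write
\[
    h - \bh = (I - \Delta)^{-1}\bigl[ u - (I - \Delta)\bh \bigr]
\]
and to identify the bracket as $(\Delta - \bDelta)\bh$, using $(I - \bDelta)\bh = u$.

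First, I check that $\bh \in \bW$: this holds because $\bh = u + \bDelta \bh$, with $u = \iota_0^{-1}\tau_0 \cdot$ const in $\bW$ and $\bDelta$ mapping into $\bW$. Hence $\bh$ is a legitimate element of $H_A$, and $\bpi \bh = \bh$.

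Second, I compute
\[
    u - (I - \Delta)\bh = u - \bh + \Delta \bh = -\bDelta\bh + \Delta\bh = (\Delta - \bDelta)\bh .
\]
The invertibility of $I - \Delta$ on $H_A$ was established in the lemma above via the Fredholm alternative. Applying $(I - \Delta)^{-1}$ gives the identity $h - \bh = (I - \Delta)^{-1}(\Delta - \bDelta)\bh$.

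Third, I take $\|\cdot\|_A$ and use submultiplicativity of operator norms, with $\Delta - \bDelta$ restricted to $\bW$. Since $\bh \in \bW$ and $\|\bh\|_{\bW}$ is measured in $\|\cdot\|_A$, this yields
\[
    \|h - \bh\|_A \le \|(I - \Delta)^{-1}\|_{A\to A}\,\|\Delta - \bDelta\|_{\bW\to A}\,\|\bh\|_A = C_{h,A}.
\]

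There is no serious obstacle; the only care needed is to confirm that $\bh$ lies in $\bW$ and that the operators act on $H_A$ so the norms are meaningful. The quantitative content, namely bounding $\|(I - \Delta)^{-1}\|_{A\to A}$ and $\|\Delta - \bDelta\|_{\bW\to A}$, is supplied separately by Lemma~\ref{lem:I-Delta} and the bound following Lemma~\ref{lem:uiota}.
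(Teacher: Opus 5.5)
Your proposal is correct and matches the paper's proof: both rest on the identity $h - \bh = (I-\Delta)^{-1}(\Delta - \bDelta)\bh$ followed by submultiplicativity of operator norms. You derive that identity explicitly from $(I-\Delta)h = u$ and $(I-\bDelta)\bh = u$, and your check that $\bh \in \bW$ is what justifies using the $\bW \to A$ norm, a step the paper leaves implicit.
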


\begin{proof}
    Write
    \[
        h - \bh
        = (I - \Delta)^{-1} (\Delta - \bDelta) \bh
        .
    \]
    The result follows.
\end{proof}

\begin{cor}
    \label{cor:hp}
    For real $x \in [1/2,1]$ and $k \geq 0$,
    \[
        |h^{(k)}(x) - \bh^{(k)}(x)|
        \leq \frac{k! C_{h,A}}{\eta^k}
        , \quad \text{where} \quad
        \eta
        = \dist \bigl( [1/2,1], \partial A \bigr)
        = \frac{\rho_A + \rho_A^{-1} - 2}{8}
        .
    \]
\end{cor}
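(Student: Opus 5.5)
The plan is to apply the Cauchy integral formula for derivatives to the difference $g = h - \bh$, which is analytic on the ellipse $A$ and bounded there by $C_{h,A}$ by Theorem~\ref{thm:hind}. For real $x \in [1/2,1]$ and any radius $0 < \delta < \eta$, the closed disk of radius $\delta$ centered at $x$ lies inside $A$. The Cauchy estimate then gives
\[
    |g^{(k)}(x)| \leq \frac{k!}{\delta^k} \sup_{|\zeta - x| = \delta} |g(\zeta)| \leq \frac{k! \, C_{h,A}}{\delta^k}
    .
\]
Letting $\delta \uparrow \eta$ yields the stated bound. Alternatively, one can use $\delta = \eta$ directly, since $g$ is continuous up to the boundary; the limit avoids needing that.

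The only genuine step is to compute $\eta = \dist([1/2,1], \partial A)$. The ellipse $A$ is the image of the standard Bernstein ellipse $E_{\rho_A}$ under $z \mapsto z/4 + 3/4$. The ellipse $E_{\rho_A}$ has semi-axes $a = (\rho_A + \rho_A^{-1})/2$ and $b = (\rho_A - \rho_A^{-1})/2$, so $A$ has semi-axes $a/4$ and $b/4$, is centered at $3/4$, and has foci $1/2$ and $1$.

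I would check that the distance from the focal segment to the boundary is attained at the endpoints of the major axis. For a point $x_0$ of the segment, consider the boundary point $(3/4 + (a/4)\cos\theta, (b/4)\sin\theta)$. Its squared distance to $x_0$, minus the value at $\theta = 0$ for the nearest endpoint, is nonnegative. This follows from the identity $a^2 - b^2 = 1$: after scaling, a squared distance of the form $(a\cos\theta - c)^2 + b^2 \sin^2\theta$ with $|c| \leq 1$ equals $\cos^2\theta - 2ac\cos\theta + c^2 + b^2$. This is a quadratic in $\cos\theta$ whose minimum over $[-1,1]$ occurs at $\cos\theta = \pm 1$, because its vertex is at $ac$ and $|ac|$ can exceed $1$ only when the endpoint is closest anyway.

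A cleaner route is the focal-distance characterization: every boundary point $\zeta$ satisfies $|\zeta - 1/2| + |\zeta - 1| = 2 \cdot a/4$. Hence, for $x$ on the segment, $2|\zeta - x| \geq |\zeta - 1/2| + |\zeta - 1| - |1/2 - 1| = a/2 - 1/2$, using the triangle inequality with $|\zeta - x| \geq |\zeta - 1/2| - |x - 1/2|$ and the analogous bound at $1$, and then summing. This gives
\[
    \eta \geq \frac{a/2 - 1/2}{2} = \frac{\rho_A + \rho_A^{-1} - 2}{8}
    ,
\]
with equality at the major-axis vertex, which is the claimed formula.

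I expect no real obstacle. The only point requiring care is this geometric distance computation, and the focal argument handles it in one line.
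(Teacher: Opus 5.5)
Your proof is correct and follows the same route the paper leaves implicit: Cauchy estimates for $h-\bh$ on disks about points of $[1/2,1]$, with $\|h-\bh\|_A\le C_{h,A}$ from Theorem~\ref{thm:hind}, and your focal-sum triangle-inequality argument correctly verifies the formula for $\eta$, which the paper takes for granted. Drop the quadratic-in-$\cos\theta$ aside, though: when $|ac|\le 1$ the minimum over $[-1,1]$ is at the interior vertex $\cos\theta = ac$ (distance $b\sqrt{1-c^2}\ge b^2/a\ge a-1$), not at $\pm 1$, so that justification is wrong even though its conclusion holds.
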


\section{Return time and Lyapunov exponent}
\label{sec:app}

In this section we derive bounds that we use for our principal applications:
computation of mean return time to $[1/2,1]$ and Lyapunov exponent.

\subsection{Return time}
\label{sec:app-tau}

Let $\tau \colon [1/2,1] \to \{1,2,\ldots\}$,
$\tau(x) = \inf \{k \geq 1 : T^k(x) \in [1/2,1]\}$
be the first return time. We would like to compute
$\int_{1/2}^1 \tau(x) h(x) \, dx$.
For this integral to be finite, we assume that $\gamma < 1$.

For a function $v$, denote
\[
    S_v (x)
    = \sum_{k = 0}^\infty
    \frac{(k+1) v\bigl( \frac{x_k + 1}{2} \bigr)}{J_k(x)}
    .
\]
Following Section~\ref{sec:comp-return} for the computation of $S_v$ when
$v$ is a Chebyshev polynomial, then using Lemma~\ref{lem:prop3.2} for the aliasing error,
we get a good numerical approximation of $\bpi S_\bh$.

The computation of the mean return time is powered by:

\begin{prop}
    \label{prop:tau-approx}
    \[
        \Bigl|
            \int_{1/2}^1 \tau(x) h(x) \, dx
            - \frac{1}{2} \int_{1/2}^1 (\bpi S_\bh)(x) \, dx
        \Bigr|
        \leq \frac{1}{4} C_{h, A} S_1(1/2)
        + \frac{2 \|\bh\|_A S_1(p)}{\rho_A^{N-1} \log \rho_A}
        .
    \]
\end{prop}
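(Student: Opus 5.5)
We first express the mean return time through the transfer operator. Write $T_k(y) = (x_k+1)/2$ for the inverse branches of the induced map, with $y \in [1/2,1]$ and $x = 2y - 1$. A point $T_k(y)$ returns at time $\tau = k+1$. Changing variables in $\int_{1/2}^1 \tau h\,dx$ branch by branch (the same computation that produces~\eqref{eq:L}) gives the identity
\[
    \int_{1/2}^1 \tau(x) h(x)\,dx = \int_{1/2}^1 \tfrac12 S_h(y)\,dy ,
\]
where $S_h$ is evaluated at $x = 2y-1 \in [0,1]$. Here $\frac12 S_h$ is the weighted transfer operator $\cL(\tau h)$, and $\int \cL(\tau h) = \int \tau h$ by duality with Lebesgue measure.

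The error then splits into two terms by the triangle inequality:
\[
    \int \tfrac12 S_h - \int \tfrac12 \bpi S_\bh
    = \tfrac12 \int S_{h - \bh}
    + \tfrac12 \int (I - \bpi) S_\bh .
\]

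For the first term, use linearity of $v \mapsto S_v$. Since $|(h - \bh)((x_k+1)/2)| \le \|h-\bh\|_A \le C_{h,A}$ by Theorem~\ref{thm:hind}, we get $|S_{h-\bh}(x)| \le C_{h,A} S_1(x)$ for real $x \in [0,1]$. Here $J_k(x)$ is positive and increasing in $x$, because each factor $1 + (\gamma+1)2^\gamma x_j^\gamma$ increases with $x_j$ and $x_j$ increases with $x$. So $S_1$ is decreasing, and $S_1(x) \le S_1(x_{\min})$. The integral over $y \in [1/2,1]$ has length $1/2$, so the first term is at most $\frac12 \cdot \frac12 \cdot C_{h,A} S_1(1/2)$. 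This matches $\frac14 C_{h,A} S_1(1/2)$, with $S_1$ evaluated at the point written in the statement. I would double-check the argument convention, i.e.\ whether $S_1(1/2)$ means $x = 1/2$ or the leftmost point in $y$-coordinates.

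For the second term, use Lemma~\ref{lem:uiota}-style reasoning. The functional $\frac12\int_{1/2}^1$ has norm at most $1/4$ from $U = [1/2,1]$. By Lemma~\ref{lem:prop3.2} with $\rho_U = 1$, $\|I - \bpi\|_{A \to U} \le 8\rho_A^{-(N-1)}/\log\rho_A$. It remains to bound $\|S_\bh\|_A$ by $\|\bh\|_A S_1(p)$. By Lemma~\ref{lem:CtoB}, $(x_k+1)/2 \in A$, so $|\bh((x_k+1)/2)| \le \|\bh\|_A$. By Lemma~\ref{lem:Jp}, $|J_k| \ge J_k(p)$. Together these give $\|S_\bh\|_A \le \|\bh\|_A \sum_k (k+1)/J_k(p) = \|\bh\|_A S_1(p)$, which is finite by Corollary~\ref{cor:k_sum_A} since $\gamma < 1$. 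Multiplying gives $\frac14 \cdot 8 = 2$, matching the second term.

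The main obstacle is justifying the analyticity and domain bookkeeping. Lemma~\ref{lem:prop3.2} needs $S_\bh$ analytic on the ellipse where we bound it, which means applying Lemma~\ref{lem:CtoB} with $A$ in place of $C$, i.e.\ using $A \subset C$. The change-of-variables identity needs $\tau h$ integrable, which follows from $\gamma<1$.
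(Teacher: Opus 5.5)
Your argument is the paper's argument. You use the identity $\int\tau h=\int\cL(\tau h)=\frac12\int S_h$. You split into $\frac12\int S_{h-\bh}$, bounded via Theorem~\ref{thm:hind} and the monotonicity of $1/J_k$. The other piece is $\frac12\int(I-\bpi)S_\bh$, bounded via Lemma~\ref{lem:prop3.2} with the degenerate ellipse $U$, Lemma~\ref{lem:CtoB}, Lemma~\ref{lem:Jp} and Corollary~\ref{cor:k_sum_A}. The constants $\frac14$ and $2$ come out right.

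There is one genuine error in your setup, and it is also why your ``double-check'' is unresolved: the point at which $S_h$ is evaluated. In~\eqref{eq:L} the inverse branches of the induced map are $x\mapsto(f^{-k}(x)+1)/2$ with $x\in[1/2,1]$. The affine map $u\mapsto(u+1)/2$ (the inverse of the right branch) appears only inside the argument of $v$. The functions $\cL v$, $J_k$ and $S_v$ are evaluated at $x\in[1/2,1]$ itself, with $x_k=f^{-k}(x)$, and there is no substitution $x=2y-1$. Concretely, $\int_{1/2}^1 \frac{v((x_k+1)/2)}{J_k(x)}\,dx=\int_{f^{-k}([1/2,1])}v\bigl(\frac{u+1}{2}\bigr)\,du=2\int_{\{\tau=k+1\}}v$. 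Summing against $k+1$ gives $\frac12\int_{1/2}^1 S_h(x)\,dx=\int\tau h$.

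Read literally, your convention breaks the argument in three places:
\begin{itemize}
\item The identity is wrong. The same substitution over $x\in(0,1]$ integrates over $f^{-k}((0,1])$ and produces $2\int_{\{\tau\ge k+1\}}h$. Hence $\int_{1/2}^1\frac12 S_h(2y-1)\,dy=\frac14\int\tau(\tau+1)h$, which is infinite for $\gamma\ge 1/2$.
\item Your first estimate becomes vacuous, because $\sup_{(0,1]}S_1=\infty$ (since $J_k(x)\to1$ as $x\to0$).
\item Lemma~\ref{lem:prop3.2} no longer applies, because $\bpi$ would act on $y\mapsto S_\bh(2y-1)$, which is singular at $y=1/2$.
\end{itemize}

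With the correct convention, $\sup_{[1/2,1]}S_1=S_1(1/2)$, so $S_1(1/2)$ means $x=1/2$ and it is finite for $\gamma<1$. Your second estimate, which you already wrote in terms of $x\in A$, then goes through unchanged.
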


The proof of Proposition~\ref{prop:tau-approx} takes the rest of this subsection.
Recall~\eqref{eq:L} and write
\[
    \int_{1/2}^1 \tau(x) h(x) \, dx
    = \int_{1/2}^1 \cL(\tau h)(x) \, dx
    \quad \text{and} \quad
    \cL(\tau h)(x)
    = \frac{1}{2} S_h(x)
    .
\]
The task is therefore reduced to bounding $|S_h - \bpi S_\bh|$ on $[1/2,1]$.
We bound $|S_h - S_\bh|$ and $|S_\bh - \bpi S_\bh|$ separately.

By Theorem~\ref{thm:hind}, $\|h - \bh\|_A \leq C_{h,A}$.
Using linearity of $S_v$ in $v$, the bound $|S_v(x)| \leq S_1(x) \sup_{[1/2,1]} |v|$
and that $\frac{1}{J_k(x)}$ is a decreasing function on $[1/2,1]$,
for all $x \in [1/2,1]$,
\begin{equation}
    \label{eq:tau-approx-1}
    |S_h(x) - S_\bh(x)|
    \leq C_{h, A} S_1(x)
    \leq C_{h, A} S_1(1/2)
    .
\end{equation}

\begin{lemma}
    \label{lem:tau-norm}
    $S_\bh$ is analytic in $A$ and
    $\| S_\bh\|_A \leq \|\bh\|_A S_1(p)$,
    where $p$ is as in~\eqref{eq:p}.
\end{lemma}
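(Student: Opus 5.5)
The plan is to mirror the proof of Lemma~\ref{lem:LAA}, with the weight $(k+1)$ inserted. Here $S_1(p)$ is the sum $\sum_k (k+1)/J_k(p)$, which is finite because $\gamma<1$ (Corollary~\ref{cor:k_sum_A}).

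\emph{Analyticity.} Each summand $x \mapsto (k+1)\,\bh\bigl(\frac{x_k+1}{2}\bigr)/J_k(x)$ is analytic on $A$.
\begin{itemize}
\item The inverse branch $x \mapsto x_k = f^{-k}(x)$ is analytic on $S_C \supset C \supset A$ by Remark~\ref{rmk:NoWa}.
\item By Lemma~\ref{lem:CtoB}, $\frac{x_k+1}{2}$ lands in $A$, where the polynomial $\bh$ is certainly analytic. We only need boundedness of $\bh$ there, which holds since $\bh$ is a finite sum of $\tau_k$.
\item $J_k(x) = \prod_{j=1}^k (1+(\gamma+1)2^\gamma x_j^\gamma)$ is analytic in $x$ on $A$. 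It does not vanish there, since Lemma~\ref{lem:Jp} gives $|J_k(x)| \ge J_k(p) \ge 1$.
\end{itemize}

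\emph{Uniform bound and conclusion.} For $x \in A$, each term of the series is bounded as follows:
\[
    \Bigl| \frac{(k+1)\,\bh\bigl(\frac{x_k+1}{2}\bigr)}{J_k(x)} \Bigr|
    \le \frac{(k+1)\|\bh\|_A}{J_k(p)}.
\]
Summing over $k$ and using Corollary~\ref{cor:k_sum_A} gives a convergent majorant independent of $x$. By the Weierstrass M-test the series converges uniformly on $A$. A uniform limit of analytic functions is analytic, so $S_\bh$ is analytic in $A$, and the same majorant gives $\|S_\bh\|_A \le \|\bh\|_A S_1(p)$.

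\emph{Expected obstacle.} The only delicate point is the uniform lower bound $|J_k(x)| \ge J_k(p)$ on all of $A$, including complex points. This is exactly Lemma~\ref{lem:Jp}, so with the earlier results available I expect no real difficulty. One must also check that the sector $S_C$ and the ellipse $A$ are compatible, i.e.\ that $A \subset C \subset S_C$ and $C \subset 2A-1$, so that all iterates stay in the domain of analyticity. That is precisely the content of Lemma~\ref{lem:CtoB} and the construction of the ellipses.
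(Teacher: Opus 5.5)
Your proposal is correct and follows essentially the same route as the paper. The paper also gets analyticity from $x_k \in S_C$ and Lemma~\ref{lem:CtoB}, uniform convergence on $A$ from the majorant $\sum_k (k+1)/J_k(p)$ given by Lemma~\ref{lem:Jp} and Corollary~\ref{cor:k_sum_A}, and the norm bound from the same majorant. You also check explicitly that $J_k$ does not vanish on $A$, and you keep the factor $\|\bh\|_A$ in the final bound, which the paper's displayed chain drops in its last equality.
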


\begin{proof}
    By definition, $S_\bh(x) = \sum_{k = 0}^\infty \frac{(k+1)\bh(\frac{x_k+1}{2})}{J_k(x)}$.
    The analyticity is straightforward: the sum converges uniformly
    by Corollary~\ref{cor:k_sum_A}, and for each $x \in A$,
    the preimages $x_k$ are in the sector $S_C$,
    and $\frac{x_k+1}{2} \in A$, see Lemma~\ref{lem:CtoB}.
    Using $\bigl|\bh\bigl(\frac{x_k+1}{2}\bigr)\bigr| \leq \|\bh\|_A$ and Corollary~\ref{cor:k_sum_A},
    \[
        | S_\bh (x) |
        \leq \|\bh\|_A \sum_{k = 0}^\infty \frac{k+1}{|J_k(x)|}
        \leq \|\bh\|_A \sum_{k = 0}^\infty \frac{k+1}{J_k(p)}
        = S_1(p)
        ,
    \]
    as required.
\end{proof}

By Lemmas~\ref{lem:prop3.2} and~\ref{lem:tau-norm}, on $[1/2,1]$,
\begin{equation}
    \label{eq:tau-approx-2}
    |S_\bh - \bpi S_\bh|
    \leq \frac{ 8 \rho_A^{-(N-1)} }{ \log \rho_A}
    \|\bh\|_A S_1(p)
    .
\end{equation}
Assembling~\eqref{eq:tau-approx-1} and~\eqref{eq:tau-approx-2} yields
Proposition~\ref{prop:tau-approx}.

\subsection{Lyapunov exponent}
\label{sec:app-lyap}

We would like to compute $\lambda$, the Lyapunov exponent of $T$:
\[
    \lambda
    = \int_0^1 h(x) \log T'(x)\, dx
    \Big/
    \int_0^1 h(x) \, dx
    .
\]
With $\tau$ the first return time to $[1/2,1]$ as above, write
\[
    \lambda
    = ( \Lambda / 2 + \log 2)
    \Big/
    \int_{1/2}^1 \tau(x) h(x) \, dx
    , \quad \text{where} \quad
    \Lambda
    = \int_{1/2}^1 \sum_{k = 0}^\infty
    \frac{h \bigl(\frac{x_k + 1}{2}\bigr) \log J_k(x)}{J_k(x)} \, dx
    .
\]

The primary difficulty is therefore estimating $\Lambda$.
For a function $v$, denote
\[
    S_v(x)
    = \sum_{k = 0}^\infty
    \frac{v\bigl( \frac{x_k + 1}{2} \bigr) \log J_k(x)}{J_k(x)}
    .
\]
Following Section~\ref{sec:comp-lyapunov} for the computation of $S_v$ when
$v$ is a Chebyshev polynomial, then using Lemma~\ref{lem:prop3.2} for the aliasing error,
we get a good numerical approximation of $\bpi S_\bh$.

Similar to the computation of mean return time, the computation of Lyapunov exponent
is powered by the following proposition:

\begin{prop}
    \label{prop:Lambda-approx}
    \[
        \Bigl|
        \Lambda - \int_{1/2}^1 (\bpi S_\bh)(x) \, dx
        \Bigr|
        \leq
        \frac{C_{h, A}}{2}  R
        + \frac{4 \|\bh\|_A}{\rho_A^{N-1} \log \rho_A} R
        .
    \]
\end{prop}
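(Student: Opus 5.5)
The plan mirrors the proof of Proposition~\ref{prop:tau-approx}. The symbol $R$ is not defined in the statement. I read it as the analogue of $S_1$ for the logarithmic observable: a common bound $R \geq \sum_{k\ge0} \frac{\log J_k(p)+\pi}{J_k(p)}$. By Corollary~\ref{cor:logJ_sum_A}, this dominates $\sum_k |\log J_k(x)/J_k(x)|$ uniformly for $x\in A$, and hence also for $x\in[1/2,1]\subset A$.

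First I rewrite $\Lambda = \int_{1/2}^1 S_h(x)\,dx$ with $S_v$ as defined in Section~\ref{sec:app-lyap}. I then split
\[
    \Lambda - \int_{1/2}^1 \bpi S_\bh
    = \int_{1/2}^1 (S_h - S_\bh) + \int_{1/2}^1 (S_\bh - \bpi S_\bh) .
\]
For the first term, $S_v$ is linear in $v$. Also, the points $(x_k+1)/2$ lie in $[1/2,1]\subset A$ for real $x$, and $\log J_k(x)\ge 0$ there. So $|S_h(x)-S_\bh(x)| \le \|h-\bh\|_A \sum_k \frac{\log J_k(x)}{J_k(x)} \le C_{h,A} R$ by Theorem~\ref{thm:hind}. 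Integrating over an interval of length $1/2$ gives the first term $\frac{C_{h,A}}{2}R$.

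For the second term, I first prove an analogue of Lemma~\ref{lem:tau-norm}: $S_\bh$ is analytic in $A$ and $\|S_\bh\|_A \le \|\bh\|_A R$. Analyticity follows as before. The preimages $x_k$ of $x\in A$ stay in $S_C$, and $(x_k+1)/2 \in A$ by Lemma~\ref{lem:CtoB}. Each $J_k$ is analytic and nonvanishing on $A$, and $\log J_k$ is taken as the branch continuous from the real line; the argument bound in Corollary~\ref{cor:logJ_sum_A} applies to this branch. The series then converges uniformly by Corollary~\ref{cor:logJ_sum_A}, and the same corollary gives the norm bound. Lemma~\ref{lem:prop3.2}, applied with the degenerate ellipse $[1/2,1]$ as target (as in Lemma~\ref{lem:uiota}), gives $\sup_{[1/2,1]}|S_\bh-\bpi S_\bh| \le \frac{8\rho_A^{-(N-1)}}{\log\rho_A}\|\bh\|_A R$. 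Integrating over length $1/2$ yields $\frac{4\|\bh\|_A}{\rho_A^{N-1}\log\rho_A}R$, which is the second term.

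The main obstacle is justifying the branch of $\log J_k$ on the complex ellipse, together with the bound $|\arg J_k|\le\pi$ uniformly in $k$. This bound is needed only for the analyticity and norm bound in the second step, and Corollary~\ref{cor:logJ_sum_A} already provides it. Everything else is bookkeeping identical to Section~\ref{sec:app-tau}.
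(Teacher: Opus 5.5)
Your proposal is correct and essentially matches the paper's proof. The paper also splits into $S_h - S_\bh$ and $S_\bh - \bpi S_\bh$, bounds the first through Corollary~\ref{cor:logJ_sum_A}, and proves $\|S_\bh\|_A \le \|\bh\|_A R$ (Lemma~\ref{lem:Lambda-norm}) before applying Lemma~\ref{lem:prop3.2} with the degenerate target ellipse and integrating over an interval of length $1/2$. Your reading of $R$ also agrees with the paper, which sets $R = S_1(p) + \pi \sum_{k \ge 0} 1/J_k(p) = \sum_{k \ge 0} (\log J_k(p) + \pi)/J_k(p)$.
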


Proof of Proposition~\ref{prop:Lambda-approx} takes the rest of this subsection;
it is very similar to that of Proposition~\ref{prop:tau-approx}.
Note that $\Lambda = \int_{1/2}^1 S_h(x) \, dx$,
so it is enough to bound $|S_h - S_\bh|$ and $|S_\bh - \bpi S_\bh|$ on $[1/2,1]$.

By Corollary~\ref{cor:logJ_sum_A}, $\sup_{x \in A} |S_1(x)| \leq R$ with
\[
    R
    = S_1(p) + \pi \sum_{k = 0}^\infty \frac{1}{J_k(p)}
    .
\]
Therefore,
\begin{equation}
    \label{eq:Lambda-approx-1}
    |S_h - S_\bh|
    \leq C_{h,A} R
\end{equation}

\begin{lemma}
    \label{lem:Lambda-norm}
    $S_\bh$ is analytic in $A$ and
    $\| S_\bh\|_A \leq \|\bh\|_A R$.
\end{lemma}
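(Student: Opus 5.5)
The proof follows Lemma~\ref{lem:tau-norm} almost verbatim, with Corollary~\ref{cor:logJ_sum_A} taking the place of Corollary~\ref{cor:k_sum_A}. Here $S_\bh(x) = \sum_{k \geq 0} \bh\bigl(\frac{x_k+1}{2}\bigr) \frac{\log J_k(x)}{J_k(x)}$ with $x \in A$, and there are two steps: show each term is analytic on $A$, then bound the sum uniformly by a summable majorant independent of $x$.

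\emph{Analyticity of the terms.} By Remark~\ref{rmk:NoWa}, the principal branch of $f^{-1}$ is analytic and maps $S_C$ into itself. Since $A \subset C \subset S_C$, every $x_k = f^{-k}(x)$ depends analytically on $x \in A$ and lies in $S_C$. By Lemma~\ref{lem:CtoB}, $\frac{x_k+1}{2} \in A$, so $\bh\bigl(\frac{x_k+1}{2}\bigr)$ is analytic (indeed $\bh$ is a polynomial) and bounded by $\|\bh\|_A$. The Jacobian $J_k(x) = \prod_{j=1}^k (1 + (\gamma+1)2^\gamma x_j^\gamma)$ is analytic and nonvanishing on $A$: each factor has real part $>1$, because $\Re(x_j^\gamma) > 0$ on $S_C$. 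Since $A$ is simply connected, $\log J_k$ has an analytic branch, namely the one used in Corollary~\ref{cor:logJ_sum_A}, which agrees with the real logarithm on $[1/2,1]$. One can take it to be the sum of the principal logarithms of the factors. This is the branch for which $|\arg J_k| \leq \pi$ is asserted there, and I will rely on that corollary as stated.

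\emph{Uniform bound.} For $x \in A$, Corollary~\ref{cor:logJ_sum_A} gives
\[
    |S_\bh(x)|
    \leq \|\bh\|_A \sum_{k\ge0} \Bigl|\frac{\log J_k(x)}{J_k(x)}\Bigr|
    \leq \|\bh\|_A \sum_{k\ge0} \frac{\log J_k(p) + \pi}{J_k(p)}
    .
\]
The right-hand side equals $\|\bh\|_A \bigl( S_1(p) + \pi \sum_k J_k(p)^{-1} \bigr) = \|\bh\|_A R$, where $S_1(p)$ is the Lyapunov-type sum with $v \equiv 1$. The majorant series $\sum_k (\log J_k(p) + \pi)/J_k(p)$ converges because $J_k(p) \sim k^{1+1/\gamma}$. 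By the Weierstrass M-test, the series defining $S_\bh$ converges uniformly on $A$. A uniform limit of analytic functions is analytic, so $S_\bh$ is analytic in $A$, and taking the supremum over $x \in A$ gives $\|S_\bh\|_A \leq \|\bh\|_A R$.

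The only delicate point is the branch of $\log J_k$. We need it to be analytic on $A$ and to satisfy the argument bound $|\arg J_k| \leq \pi$ that underlies Corollary~\ref{cor:logJ_sum_A}. Both properties are asserted in that corollary's proof via univalence of $f^{-k}$ on $A$, so no new argument is needed beyond citing it.
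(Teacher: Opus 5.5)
Your proof is correct and takes the same route the paper intends: the paper's proof of this lemma is just ``similar to the proof of Lemma~\ref{lem:tau-norm}'', meaning termwise analyticity via Lemma~\ref{lem:CtoB} plus a uniform summable majorant, with Corollary~\ref{cor:logJ_sum_A} in place of Corollary~\ref{cor:k_sum_A}. Your extra remark on the branch of $\log J_k$ fills in a detail the paper leaves implicit: the branch exists because $J_k$ is nonvanishing on the simply connected $A$, and it is fixed by being real on $[1/2,1]$.
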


\begin{proof}
    Similar to the proof of Lemma~\ref{lem:tau-norm}.
\end{proof}

By Lemmas~\ref{lem:prop3.2} and~\ref{lem:Lambda-norm}, on $[1/2,1]$,
\begin{equation}
    \label{eq:Lambda-approx-2}
    |S_\bh - \bpi S_\bh|
    \leq \frac{ 8 \rho_A^{-(N-1)} }{ \log \rho_A}
    \|\bh\|_A R
    .
\end{equation}
Assembling~\eqref{eq:Lambda-approx-1} and~\eqref{eq:Lambda-approx-2} yields
Proposition~\ref{prop:Lambda-approx}.

\section{Invariant density on \texorpdfstring{$(0,1]$}{(0,1]}}
\label{sec:h01}

It is standard that
\begin{equation}
    \label{eq:Sh}
    h(x)
    = \frac{1}{2} \sum_{k=0}^\infty \frac{h \bigl( \frac{1 + x_k}{2} \bigr)}{J_k(x)}
    .
\end{equation}
Denote $z = x^\gamma$ and $w = A(x)$. Then
\begin{equation}
    \label{eq:ha}
    h(x)
    = \frac{1}{2} \sum_{k=0}^\infty \psi(k)
    \quad \text{with} \quad
    \psi(s)
    = \kappa(w) \frac{h \bigl( \frac{1 + A^{-1}(w + s)}{2} \bigr)}{\kappa(w + s)}
    .
\end{equation}
Using the approximation of $h(x)$ on $[1/2,1]$ and the Euler-Maclaurin
formula from Theorem~\ref{thm:EM}, it is now straightforward to accurately
compute $h(x)$ for small $x$.

In the rest of this section we derive and exploit a not-so-accurate
(yet still rigorous) approximation for not-so-small $x$:
\[
    h(x)
    \approx \frac{\gamma a_{-1}}{2} h(1/2) x^{-\gamma}
    .
\]
We repeat our analysis with a crude truncated Abel function as in
Section~\ref{sec:Abel} using $n = \ddn$ with $\ddn$ relatively small,
such as $8$: this allows for a larger petal $\cP_1$.
We mark the symbols with two dots on top to distinguish them
from those we use in the precise approximation.

Thus we choose $\ddvarkappa_1$ and $\ddnu$ as in~\eqref{eq:nuka}.
Let $\ddC, \ddR$ be as in Lemma~\ref{lem:kappa} and $\ddr_*$ as in
Corollary~\ref{cor:kappa}.
Using Theorem~\ref{thm:EM} with the number of derivatives $L = 1$,
for $\Re(z^{-1}) \geq \ddr_*$,
\begin{equation}
    \label{eq:hsum}
    \begin{aligned}
        h(x)
        & = \frac{1}{2} \int_0^\infty \psi(u) \, du
        + \frac{\psi(0)}{4}
        - \frac{B_{2}}{4} \psi'(0)
        + \frac{R_L}{2}
        \\
        & = - \frac{\gamma \kappa(w)}{2} \int_0^x h \Bigl( \frac{1 + v}{2} \Bigr) \, dv
        + \frac{1}{4} h \Bigl( \frac{1 + x}{2} \Bigr)
        - \frac{1}{24} \Bigl[
            \frac{h' \bigl( \frac{1 + x}{2} \bigr)}{2 A'(x)}
            - \frac{h \bigl( \frac{1 + x}{2} \bigr)A''(x)}{A'(x)^2}
        \Bigr]
        + \frac{R_L}{2}
        \\
        & = \frac{\gamma}{2} \frac{a_{-1}}{x^{\gamma + 1}}
        \int_0^x h \Bigl( \frac{1 + v}{2} \Bigr) \, dv
        + \cE(x)
        ,
    \end{aligned}
\end{equation}
where
\[
    |R_L|
    \leq \frac{3! C_\psi}{(2\pi)^3 \ddnu^2 \ddvarkappa_1^3}
    , \quad
    C_\psi
    \leq \|h\|_A \;
    \frac{a_{-1} + \ddC_1}{a_{-1} - \ddC_1}
    \ddR^{-(1 + 1/\gamma)}
    .
\]

\begin{rmk}
    We expect $\cE$ to be bounded, since
    $\kappa(w) = - \frac{a_{-1}}{x^{\gamma + 1}}
    + \frac{a_\ell}{x} + O(x^{\gamma - 1})$
    and $\int_0^x h \Bigl( \frac{1 + v}{2} \Bigr) \, dv = O(x)$.
\end{rmk}

For a function $q$, denote
$\| q \|_\# = \sup_{z \in \ddcP_1} |q((1 + x) / 2)|$.

\begin{rmk}
    \label{rmk:h'B}
    Recall the ellipse $A$.
    We make sure that $\ddr_1$ is large enough so that
    $(1 + x) / 2 \in A$ for all $z \in \ddcP_1$,
    with the distance to the boundary of $A$ at least
    \[
        \delta_\#
        = \dist \Bigl( \frac{1}{2}, \partial A \Bigr)
        - \frac{\ddr_1^{- 1 / \gamma}}{2}
        = \frac{\rho_A + \rho_A^{-1}}{8} - \frac{1}{4}
        - \frac{\ddr_1^{- 1 / \gamma}}{2}
        > 0
        .
    \]
    Then, using the bound on $\|h\|_A$ obtained with rigorous numerics,
    we bound $\|h^{(k)}\|_\# \leq k! \|h\|_A / \delta_\#^k$.
\end{rmk}

\begin{lemma}
    \label{lem:hE}
    With $\ddC, \ddR, \ddr_*$ as above,
    $|\cE(x)| \leq \Delta_*$ for $\Re(z^{-1}) \geq \ddr_*$, where
    \[
        \Delta_*
        = \Bigl[
            \frac{\gamma}{2} \ddr_1 \ddC_1 + \frac{1}{4}
            + \frac{C_\Delta}{24}
        \Bigr] \|h\|_A
        + \frac{\ddr_*^{-1 - 1/\gamma}}{48 \gamma(a_{-1} - \ddC_1)} \|h'\|_\#
        + \frac{|R_L|}{2}
    \]
    with $\ddC_1, \ddC_2$ from Lemmas~\ref{lem:CA}, \ref{lem:CA2} and
    \[
        C_\Delta
        = \frac{|\gamma-1|}{\gamma \ddr_* (a_{-1} - \ddC_1)}
        + \frac{2 a_{-1} + \ddC_2}{\ddr_* (a_{-1} - \ddC_1)^2}
        .
    \]
\end{lemma}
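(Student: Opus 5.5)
The plan is to read off $\cE(x)$ directly from the second and third lines of~\eqref{eq:hsum} and bound it term by term. Subtracting the third line from the second, $\cE(x)$ is the sum of four pieces: the mismatch between the exact coefficient $-\gamma\kappa(w)/2$ and its leading approximation $\frac{\gamma}{2} a_{-1} x^{-\gamma-1}$, multiplied by $\int_0^x h(\frac{1+v}{2})\,dv$; the boundary term $\frac14 h(\frac{1+x}{2})$; the Euler--Maclaurin derivative term with the factor $\frac1{24}$; and $R_L/2$. I will work with real $x$ (the statement is used on $(0,1]$), so $t = z^{-1} \geq \ddr_* \geq \ddr_1$ and all the estimates of Lemmas~\ref{lem:CA} and~\ref{lem:CA2} (with the dotted constants) apply. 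Since $\Re(z^{-1}) \geq \ddr_*$ also places $(1+x)/2$ in $A$ (Remark~\ref{rmk:h'B}), I may use $|h(\frac{1+x}{2})| \leq \|h\|_A$, $|h'(\frac{1+x}{2})| \leq \|h'\|_\#$, and $|\int_0^x h(\frac{1+v}{2})\,dv| \leq x\|h\|_A$.

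\emph{First term.} I will express everything in the $t$-variable. Since $\kappa(w) = A'(x)/\gamma$, $A'(x) = \gamma x^{\gamma-1}\hA'(z)$ and $z^2\hA'(z) = -\tA'(t)$, we get $-\gamma\kappa(w) = \gamma x^{-\gamma-1}\tA'(t)$. The mismatch is therefore $\frac{\gamma}{2} x^{-\gamma-1}(\tA'(t) - a_{-1})$. Multiplying by $x\|h\|_A$ gives $\frac{\gamma}{2}|t(\tA'(t)-a_{-1})|\,\|h\|_A$, and Lemma~\ref{lem:CA} bounds this by $\frac{\gamma}{2}\ddr_1\ddC_1\|h\|_A$. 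This is exactly the first contribution to $\Delta_*$. \emph{Second term.} It is at most $\frac14\|h\|_A$.

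\emph{Derivative term.} This needs bounds on $1/A'$ and $A''/A'^2$. First, $|1/A'(x)| = x^{\gamma+1}/(\gamma|\tA'(t)|) \leq z^{1+1/\gamma}/(\gamma(a_{-1}-\ddC_1))$, which is at most $\ddr_*^{-1-1/\gamma}/(\gamma(a_{-1}-\ddC_1))$. Together with the factor $\frac1{24}\cdot\frac12$ this yields the $\|h'\|_\#$ term. For the second ratio, differentiate $A(x) = \hA(x^\gamma)$ twice:
\[
    \frac{A''}{(A')^2}
    = \frac{\gamma-1}{\gamma\, z\hA'(z)} + \frac{\hA''(z)}{\hA'(z)^2}
    = \frac{(\gamma-1)\, z}{\gamma\, z^2\hA'(z)} + \frac{z\cdot z^3\hA''(z)}{(z^2\hA'(z))^2}
    .
\]
Lemma~\ref{lem:CA} gives $|z^2\hA'| \geq a_{-1}-\ddC_1$, and Lemma~\ref{lem:CA2} gives $|z^3\hA''| \leq 2a_{-1}+\ddC_2$. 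Together with $z \leq \ddr_*^{-1}$, the modulus is at most $C_\Delta$, so the term contributes $\frac{C_\Delta}{24}\|h\|_A$. Finally I add $|R_L|/2$, using the bound on $R_L$ displayed after~\eqref{eq:hsum}.

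The main obstacle is bookkeeping rather than analysis. The key point is to keep track of the chain of identities between $\kappa$, $A'$, $\hA'$ and $\tA'$, with their signs and powers of $x$, so that the $x^{-\gamma-1}$ singularity cancels exactly against the factor $x$ from the integral. One must also check that every $z$-dependent factor is monotone in $z$, so that taking $z^{-1} = \ddr_*$ (or $\ddr_1$, where Lemma~\ref{lem:CA} already provides the supremum) gives the worst case.
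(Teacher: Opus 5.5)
Your argument is essentially the paper's. You use the same four pieces, the same identity $-\gamma\kappa(w) = \gamma x^{-\gamma-1}\tA'(t)$ to reduce the first piece to $\frac{\gamma}{2}|t(\tA'(t)-a_{-1})|\,\|h\|_A$, and the same expression for $A''/(A')^2$ through $z^2\hA'$ and $z^3\hA''$. Every constant in $\Delta_*$ comes out correctly.

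The one real gap is that you prove the bound only for real $x$. Your justification, that the statement is only used on $(0,1]$, is wrong. The lemma is stated for all complex $z$ with $\Re(z^{-1}) \geq \ddr_*$, and Corollary~\ref{cor:hE} applies it at complex points $\zeta$ in a disk around a real $x$. It does so to get $\cE'$, $\cE''$, $\cE'''$ from Cauchy's estimates, and a bound on $|\cE|$ along the real line alone gives no control of those derivatives.

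The repair is routine, and it is what the paper does (note its $|t|^{-1/\gamma}$ and $|t|^{1+1/\gamma}$): run the same estimates with moduli.
\begin{itemize}
\item Lemmas~\ref{lem:CA} and~\ref{lem:CA2} already hold on the whole complex petal.
\item From $|t| \geq \Re(t) \geq \ddr_*$ you get $|z| \leq \ddr_*^{-1}$ and $|x|^{\gamma+1} = |z|^{1+1/\gamma} \leq \ddr_*^{-1-1/\gamma}$.
\item Since $z \in \ddcP_1$, the point $(1+x)/2$ lies in $A$, and $A$ is convex and contains $1/2$. So the image of the segment $[0,x]$ under $v \mapsto (1+v)/2$ stays in $A$, giving $\bigl|\int_0^x h(\frac{1+v}{2})\,dv\bigr| \leq |x|\,\|h\|_A = |t|^{-1/\gamma}\|h\|_A$.
\end{itemize}
With these replacements each of your bounds goes through verbatim. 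Your remark about monotonicity in $z$ should then be read as monotonicity in $|z|$.
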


\begin{proof}
    From~\eqref{eq:hsum}, we decompose the error as
    $\cE(x) = \cE_1(x) + \cE_2(x) + \cE_3(x) + \frac{R_L}{2}$, where
    \begin{align*}
        \cE_1(x) &= -\frac{\gamma}{2} \Bigl( \kappa(w) + \frac{a_{-1}}{x^{\gamma+1}} \Bigr) \int_0^x h\Bigl(\frac{1+v}{2}\Bigr) \, dv , \\
        \cE_2(x) &= \frac{1}{4} h\Bigl(\frac{1+x}{2}\Bigr) , \\
        \cE_3(x) &= -\frac{1}{24} \Bigl[ \frac{h'\bigl(\frac{1+x}{2}\bigr)}{2 A'(x)} - \frac{h\bigl(\frac{1+x}{2}\bigr)A''(x)}{A'(x)^2} \Bigr] .
    \end{align*}

    For $\cE_1(x)$, using $t = x^{-\gamma}$ and $\kappa(w) = -\tA'(t)t^{1+1/\gamma}$, we have $\kappa(w) + a_{-1} x^{-(\gamma+1)} = (a_{-1} - \tA'(t))t^{1+1/\gamma}$.
    Since $\bigl|\int_0^x h(\frac{1+v}{2}) \, dv\bigr| \leq x \|h\|_A = |t|^{-1/\gamma} \|h\|_A$,
    using Lemma~\ref{lem:CA} we obtain
    \[
        |\cE_1(x)|
        \leq \frac{\gamma}{2} |t (\tA'(t) - a_{-1})| \|h\|_A
        \leq \frac{\gamma}{2} \ddr_1 \ddC_1 \|h\|_A
        .
    \]

    For $\cE_2(x)$, we immediately have $|\cE_2(x)| \leq \frac{1}{4}\|h\|_A$.

    For $\cE_3(x)$, applying Lemma~\ref{lem:CA},
    \[
        |A'(x)|
        = | \gamma \tA'(t) t^{1+1/\gamma}|
        \geq \gamma (a_{-1} - \ddC_1) \ddr_*^{1+1/\gamma}
        .
    \]
    This bounds the first term of $\cE_3(x)$ by
    $\frac{\ddr_*^{-1-1/\gamma}}{48 \gamma(a_{-1} - \ddC_1)} \|h'\|_\#$.

    To bound the second term of $\cE_3(x)$, we express $A''(x)/A'(x)^2$ in terms of $z = x^\gamma$:
    \[
        \frac{A''(x)}{A'(x)^2} = \frac{\hA''(z)}{\hA'(z)^2} + \frac{\gamma - 1}{\gamma z \hA'(z)} = \frac{z^3 \hA''(z)}{t \tA'(t)^2} + \frac{1 - \gamma}{\gamma t \tA'(t)} .
    \]
    Applying Lemma~\ref{lem:CA} and Lemma~\ref{lem:CA2}, we bound
    $|\tA'(t)| \geq a_{-1} - \ddC_1$ and $|z^3 \hA''(z)| \leq 2a_{-1} + \ddC_2$.
    Thus,
    \[
        \Bigl| \frac{A''(x)}{A'(x)^2} \Bigr|
        \leq \frac{2a_{-1} + \ddC_2}{\ddr_*(a_{-1} - \ddC_1)^2}
        + \frac{|\gamma - 1|}{\gamma \ddr_*(a_{-1} - \ddC_1)}
        = C_\Delta
        .
    \]
    This bounds the second term of $\cE_3(x)$ by $\frac{C_\Delta}{24} \|h\|_A$.

    Finally, summing the bounds for $|\cE_1|, |\cE_2|, |\cE_3|$ and $|\frac{R_L}{2}|$
    yields the stated bound $\Delta_*$.
\end{proof}

Denote $\ddx_* = 2^{-1 - \frac{1}{2 \gamma}} \, \ddr_*^{-1/\gamma}$ and
$\theta_* = 1 / \sin \bigl( \min \bigl\{ \frac{\pi}{2}, \frac{\pi}{4 \gamma} \bigr\} \bigr)$.

\begin{cor}
    \label{cor:hE}
    For real $x \leq \ddx_*$,
    \[
        |\cE'(x)| \leq \frac{\Delta_* \theta_*}{x}
        , \quad
        |\cE''(x)| \leq \frac{2 \Delta_* \theta_*^2}{x^2}
        \quad \text{and} \quad
        |\cE'''(x)| \leq \frac{6 \Delta_* \theta_*^3}{x^3}
        .
    \]
\end{cor}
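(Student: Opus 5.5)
The plan is to use Cauchy estimates for the analytic function $\cE$ on disks around real $x$, fed by the uniform bound $|\cE|\le\Delta_*$ of Lemma~\ref{lem:hE}. By~\eqref{eq:hsum}, $\cE(x)$ is defined as $h(x)$ minus $\frac{\gamma}{2}a_{-1}x^{-\gamma-1}\int_0^x h(\frac{1+v}{2})\,dv$. Both terms continue analytically to complex $x$ with $\Re(x^{-\gamma})\ge \ddr_*$ (taking principal branches in the sector $S_C$). The first term continues via the Euler--Maclaurin representation~\eqref{eq:ha}. For the second, the substitution $v=xs$ makes it $\frac{\gamma a_{-1}}{2}x^{-\gamma}\int_0^1 h(\frac{1+xs}{2})\,ds$. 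Lemma~\ref{lem:hE} then gives $|\cE|\le\Delta_*$ on the whole complex region $\{x:\Re(x^{-\gamma})\ge\ddr_*\}$, not just on the real line, since its proof worked with complex $t$ throughout.

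The geometric step is to show that for real $0<x\le\ddx_*$, the closed disk of radius $x/\theta_*$ around $x$ lies in this region. Write $\rho=1/\theta_*=\sin\beta$ with $\beta=\min\{\pi/2,\pi/(4\gamma)\}$. A point $y$ in the disk then satisfies $|\arg y|\le\beta$ (the tangent lines from $0$ to the disk make angle $\arcsin\rho$), so $|\arg y^{-\gamma}|\le\gamma\beta\le\pi/4$. It also satisfies $|y|\le x(1+\rho)\le 2x$, hence $|y^{-\gamma}|\ge (2x)^{-\gamma}$. Therefore
\[
\Re(y^{-\gamma})\ge \cos(\pi/4)\,(2x)^{-\gamma}=2^{-1/2-\gamma}x^{-\gamma}.
\]
This is at least $\ddr_*$ exactly when $x\le 2^{-1-\frac{1}{2\gamma}}\ddr_*^{-1/\gamma}=\ddx_*$, which explains the definition of $\ddx_*$.

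The case $\gamma\le 1/2$, where $\beta=\pi/2$, needs a separate check. Here $\rho=1$ and the disk touches $0$, so I would take radius $x/\theta_*$ only up to the boundary, using the closed disk minus the origin, or a radius slightly less than $x$ with a limiting argument. In that case $\gamma\beta\le\pi/4$ still holds.

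With the inclusion established, Cauchy's estimate on the disk of radius $x/\theta_*$ gives
\[
|\cE^{(k)}(x)|\le k!\,\Delta_*\,(\theta_*/x)^k,
\]
which for $k=1,2,3$ produces the constants $1,2,6$ in the statement. I expect the main obstacle to be justifying that the bound of Lemma~\ref{lem:hE} truly holds for complex $x$. In particular, its proof used $|\int_0^x h|\le x\|h\|_A$, and this should become $|x|\,\|h\|_\#$ along the straight path; that is fine because $(1+xs)/2\in A$ by Remark~\ref{rmk:h'B}. The same care is needed with the moduli $|t|$ versus $\Re t$ in the bounds for $\cE_1$ and $\cE_3$, which is where I would verify that $\ddr_*$ and $\ddr_1$ enter correctly.
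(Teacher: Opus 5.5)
Your proposal is correct and matches the paper's proof: Cauchy estimates on the disk of radius $x/\theta_*$ around $x$, using the tangent-angle bound $|\arg\zeta|\le\min\{\pi/2,\pi/(4\gamma)\}$ and $|\zeta|\le 2x$ to get $\Re(\zeta^{-\gamma})\ge 2^{-\gamma-1/2}x^{-\gamma}\ge\ddr_*$, and then the bound $|\cE|\le\Delta_*$ from Lemma~\ref{lem:hE}. The extra care you take is left implicit in the paper: checking that Lemma~\ref{lem:hE} holds for complex $z$, and using a limiting radius when $\gamma\le 1/2$, where the disk touches $0$.
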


\begin{proof}
    Let $\alpha = \min\bigl\{ \frac{\pi}{2}, \frac{\pi}{4\gamma} \bigr\}$,
    so that $\theta_* = 1 / \sin \alpha$. Consider the complex disk $D$
    centered at $x$ with radius $\sigma = x / \theta_* = x \sin \alpha$.

    For any $\zeta \in D$, simple geometry implies
    $|\arg \zeta| \leq \alpha \leq \frac{\pi}{4\gamma}$, hence
    $\cos(\gamma \arg \zeta) \geq \cos(\frac{\pi}{4}) = 2^{-1/2}$.
    Moreover, $|\zeta| \leq x + \sigma \leq 2x$.
    This yields the bound
    \[
        \Re(\zeta^{-\gamma}) = |\zeta|^{-\gamma} \cos(\gamma \arg \zeta) \geq (2x)^{-\gamma} 2^{-1/2} = 2^{-(\gamma + 1/2)} x^{-\gamma} .
    \]
    Since $x \leq \ddx_* = 2^{-\frac{\gamma + 1/2}{\gamma}} \ddr_*^{-1/\gamma}$,
    we obtain $\Re(\zeta^{-\gamma}) \geq \ddr_*$.

    This ensures that $D$ is contained in the domain where
    Lemma~\ref{lem:hE} guarantees $|\cE(\zeta)| \leq \Delta_*$.
    Applying Cauchy's bound $|\cE^{(n)}(x)| \leq n! \Delta_* / \sigma^n$
    yields the desired results.
\end{proof}

\begin{lemma}
    \label{lem:h-near-zero}
    Let $M_0 = \frac{\gamma a_{-1}}{2} h(1/2)$ and $M_1 = \frac{\gamma a_{-1}}{8} h'(1/2)$. Write
    \begin{align*}
        h(x)
        &= M_0 x^{-\gamma} + M_1 x^{1-\gamma}
        + E_0(x) + \cE(x)
        , \\
        h'(x)
        &= -\gamma M_0 x^{-\gamma - 1}
        + (1-\gamma) M_1 x^{-\gamma}
        + E_1(x) + \cE'(x)
        , \\
        h''(x)
        &= \gamma(\gamma+1) M_0 x^{-\gamma - 2}
        - \gamma(1-\gamma) M_1 x^{-\gamma - 1}
        + E_2(x) + \cE''(x)
        , \\
        h'''(x)
        &= - \gamma(\gamma+1)(\gamma + 2) M_0 x^{-\gamma - 3}
        + \gamma(\gamma+1)(1-\gamma) M_1 x^{-\gamma - 2}
        + E_3(x) + \cE'''(x)
        .
    \end{align*}
    For all real $x \leq \ddx_*$,
    we have $|E_0(x)| \leq D_0 x^{2 - \gamma}$,
    $|E_1(x)| \leq D_1 x^{1 - \gamma}$, $|E_2(x)| \leq D_2 x^{-\gamma}$,
    and $|E_3(x)| \leq D_3 x^{-1 - \gamma}$, where
    \[
        D_0 = \frac{\gamma a_{-1}}{48} \|h''\|_\#
        , \quad
        D_1 = (\gamma + 4) D_0
        , \quad
        D_2 = (\gamma^2 + 9\gamma + 14) D_0
        ,
    \]
    \[
        D_3 = (\gamma^3 + 15\gamma^2 + 56\gamma + 48) D_0
        .
    \]
\end{lemma}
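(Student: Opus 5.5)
From~\eqref{eq:hsum} the decomposition is
\[
    h(x) = \frac{\gamma a_{-1}}{2} x^{-\gamma-1} F(x) + \cE(x),
    \qquad
    F(x) = \int_0^x h\Bigl(\frac{1+v}{2}\Bigr)\,dv .
\]
So the plan is to Taylor expand $F$ at $0$ and then differentiate the resulting identity term by term.

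First I write $g(v) = h((1+v)/2)$, so $g(0) = h(1/2)$, $g'(0) = h'(1/2)/2$ and $g''(v) = h''((1+v)/2)/4$. Taylor's theorem with integral remainder gives
\[
    F(x) = g(0)\,x + g'(0)\,\frac{x^2}{2} + \rho(x),
    \qquad
    \rho(x) = \int_0^x \frac{(x-v)^2}{2}\, g''(v)\,dv .
\]
Multiplying by $\frac{\gamma a_{-1}}{2}x^{-\gamma-1}$ produces exactly $M_0 x^{-\gamma} + M_1 x^{1-\gamma}$, since $\frac{\gamma a_{-1}}{2}\cdot\frac{h'(1/2)}{4} = M_1$. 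This identifies
\[
    E_0(x) = \frac{\gamma a_{-1}}{2} x^{-\gamma-1}\rho(x).
\]
Because $x \le \ddx_*$ puts $x$ in the real part of $\ddcP_1$, we have $|g''| \le \|h''\|_\#/4$ on $[0,x]$. Hence $|\rho(x)| \le \|h''\|_\# x^3/24$, and
\[
    |E_0(x)| \le \frac{\gamma a_{-1}}{48}\|h''\|_\#\, x^{2-\gamma} = D_0 x^{2-\gamma}.
\]

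For the derivatives I do not differentiate the bound; I differentiate the identity exactly. Define $E_j := h^{(j)} - (\text{main terms})^{(j)} - \cE^{(j)}$, which equals $E_0^{(j)}$. Since $h$ is analytic on $(0,1]$ and $\cE$ is analytic in the disks of Corollary~\ref{cor:hE}, $E_0$ is smooth and the claimed formulas for $h',h'',h'''$ hold with $E_j = E_0^{(j)}$. I then compute with Leibniz:
\[
    E_0^{(j)}(x) = \frac{\gamma a_{-1}}{2}\sum_{i=0}^{j}\binom{j}{i}\bigl(x^{-\gamma-1}\bigr)^{(j-i)}\rho^{(i)}(x).
\]
The derivatives of the remainder are
\[
    \rho'(x) = \int_0^x (x-v)\,g'' \,dv,\qquad
    \rho''(x) = \int_0^x g''\,dv,\qquad
    \rho'''(x) = g''(x),
\]
and they obey $|\rho^{(i)}(x)| \le \frac{\|h''\|_\#}{4}\cdot\frac{x^{3-i}}{(3-i)!}$.

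Each Leibniz term is therefore bounded by a constant times $D_0\, x^{2-\gamma-j}$. Factoring out $D_0 = \frac{\gamma a_{-1}}{48}\|h''\|_\#$, the $(j,i)$ term contributes
\[
    \binom{j}{i}\,(\gamma+1)(\gamma+2)\cdots(\gamma+j-i)\cdot\frac{3!}{(3-i)!}.
\]
For $j=1$ this gives $(\gamma+1) + 3 = \gamma+4$. For $j=2$ it gives $(\gamma+1)(\gamma+2) + 2\cdot 3(\gamma+1) + 6 = \gamma^2+9\gamma+14$. For $j=3$ the four terms are
\[
    (\gamma+1)(\gamma+2)(\gamma+3),\quad 3\cdot 3(\gamma+1)(\gamma+2),\quad 3\cdot 6(\gamma+1),\quad 6,
\]
which sum to $\gamma^3+15\gamma^2+56\gamma+48$. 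These match the stated $D_1, D_2, D_3$.

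The only point needing care is the smoothness and domain bookkeeping: $E_j$ must really equal $E_0^{(j)}$, and the bound on $g''$ needs $(1+v)/2$ to lie in the region where $\|\cdot\|_\#$ controls $h''$. Both follow from Remark~\ref{rmk:h'B} and the choice $x \le \ddx_*$, which forces $x^{-\gamma} \ge \ddr_* \ge \ddr_1$. Apart from that, the argument is bookkeeping of the Leibniz coefficients.
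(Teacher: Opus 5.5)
Your proposal is correct and follows essentially the paper's proof. The paper likewise Taylor-expands $\int_0^x h\bigl(\frac{1+v}{2}\bigr)\,dv$ to second order. It bounds the remainder's derivatives by $\frac{\|h''\|_\#}{4}\frac{x^{3-i}}{(3-i)!}$, obtained by successive integration where you use the integral form of the remainder. It then sets $E_k = E_0^{(k)}$ and applies the product rule to $\frac{\gamma a_{-1}}{2}x^{-\gamma-1}E_I(x)$; your Leibniz coefficients reproduce $D_1, D_2, D_3$ exactly.
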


\begin{proof}
    Let $I(x) = \int_0^x h\bigl( \frac{1+v}{2} \bigr) \, dv$. Write
    \[
        I(x)
        = h(1/2) x + \frac{1}{4} h'(1/2) x^2 + E_I(x)
        .
    \]
    Note that $E_I'''(x) = I'''(x) = \frac{1}{4} h''(\frac{1+x}{2})$ is bounded by $\frac{1}{4}\|h''\|_\#$.
    Successively integrating from $0$ to $x$, we obtain
    $|E_I^{(k)}(x)| \leq \frac{1}{4(3-k)!} \|h''\|_\# x^{3-k}$ for $k=0,1,2$.

    From~\eqref{eq:hsum}, we obtain the target expression for $h(x)$ with
    $E_0(x) = \frac{\gamma a_{-1}}{2} x^{-\gamma-1} E_I(x)$, and
    the bound for $E_I(x)$ gives $|E_0(x)| \leq D_0 x^{2-\gamma}$.

    The higher-order bounds $|E_k(x)| \leq D_k x^{2-\gamma-k}$ for $k=1,2,3$ follow by setting
    $E_k(x) = E_0^{(k)}(x)$ and directly differentiating the product $\frac{\gamma a_{-1}}{2} x^{-\gamma-1} E_I(x)$,
    bounding each resulting term with the established estimates for $E_I^{(k)}(x)$.
\end{proof}

\begin{cor}
    \label{cor:h-near-zero}
    Let $M_0$ and $M_1$ be as in Lemma~\ref{lem:h-near-zero}. Set
    \begin{align*}
        \delta_0
        & = |M_1| \ddx_* + D_0 \ddx_*^2 + \Delta_* \ddx_*^\gamma
        &
        \delta_1
        & = |1-\gamma| |M_1| \ddx_* + D_1 \ddx_*^2 + \Delta_* \theta_* \ddx_*^\gamma
        \\
        \delta_2
        & = \gamma |1-\gamma| |M_1| \ddx_* + D_2 \ddx_*^2 + 2 \Delta_* \theta_*^2 \ddx_*^\gamma
        &
        \delta_3
        & = \gamma(\gamma+1) |1-\gamma| |M_1| \ddx_* + D_3 \ddx_*^2 + 6 \Delta_* \theta_*^3 \ddx_*^\gamma
    \end{align*}
    Let
    \[
        F_1^- = \frac{\gamma M_0 - \delta_1}{M_0 + \delta_0}
        , \quad
        F_2^- = \frac{\gamma(\gamma+1) M_0 - \delta_2}{M_0 + \delta_0}
        , \quad
        F_3^- = \frac{\gamma(\gamma+1)(\gamma+2) M_0 - \delta_3}{M_0 + \delta_0}
        ,
    \]
    \[
        F_1^+ = \frac{\gamma M_0 + \delta_1}{M_0 - \delta_0}
        , \quad
        F_2^+ = \frac{\gamma(\gamma+1) M_0 + \delta_2}{M_0 - \delta_0}
        , \quad
        F_3^+ = \frac{\gamma(\gamma+1)(\gamma+2) M_0 + \delta_3}{M_0 - \delta_0}
        .
    \]
    Assuming that $\ddx_*$ is small enough so that $M_0 > \delta_0$,
    for all real $x \leq \ddx_*$ and $k=1,2,3$,
    \[
        \frac{F_k^-}{x^k}
        \leq (-1)^k \frac{h^{(k)}(x)}{h(x)}
        \leq \frac{F_k^+}{x^k}
        .
    \]
\end{cor}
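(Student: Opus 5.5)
The plan is to combine the expansions of Lemma~\ref{lem:h-near-zero} with the derivative bounds on $\cE$ from Lemma~\ref{lem:hE} and Corollary~\ref{cor:hE}. Everything gets normalised by the leading power $x^{-\gamma-k}$, and the result is a ratio of a two-sided bound for $(-1)^k h^{(k)}$ over a two-sided bound for $h$.

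\emph{Bounds on $h$.} Fix real $0 < x \leq \ddx_*$. From the first line of Lemma~\ref{lem:h-near-zero},
\[
    \bigl| x^\gamma h(x) - M_0 \bigr|
    \leq |M_1| x + D_0 x^2 + x^\gamma |\cE(x)|
    \leq |M_1| x + D_0 x^2 + \Delta_* x^\gamma ,
\]
using Lemma~\ref{lem:hE}. This applies because $x \leq \ddx_*$ forces $x^{-\gamma} \geq \ddr_*$; the disk argument in Corollary~\ref{cor:hE} shows this already at $\zeta = x$. Each term on the right is increasing in $x$, since $\gamma > 0$, so the right side is at most $\delta_0$. Hence $M_0 - \delta_0 \leq x^\gamma h(x) \leq M_0 + \delta_0$. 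Under the assumption $M_0 > \delta_0$, $h(x) > 0$.

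\emph{Bounds on derivatives.} For $k = 1,2,3$, multiply the $k$-th line of Lemma~\ref{lem:h-near-zero} by $(-1)^k x^{\gamma+k}$. The leading term becomes $\gamma(\gamma+1)\cdots(\gamma+k-1) M_0$, which equals the coefficient appearing in $F_k^\pm$. The signs work out because $(-1)^k$ times the coefficient $(-1)^k\gamma\cdots$ is positive.

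The remaining terms are bounded as follows:
\begin{itemize}
    \item The $M_1$ term becomes $\pm c_k |1-\gamma| |M_1| x$, with $c_1 = 1$, $c_2 = \gamma$, $c_3 = \gamma(\gamma+1)$.
    \item The $E_k$ term is at most $D_k x^{\gamma+k} x^{2-\gamma-k} = D_k x^2$.
    \item The $\cE^{(k)}$ term is at most $k!\, \Delta_* \theta_*^k x^\gamma$ by Corollary~\ref{cor:hE}.
\end{itemize}
Again every error is monotone increasing in $x$, so its sum is bounded by $\delta_k$ evaluated at $\ddx_*$. This gives
\[
    P_k M_0 - \delta_k \leq (-1)^k x^{\gamma+k} h^{(k)}(x) \leq P_k M_0 + \delta_k ,
\]
where $P_k$ denotes the product above.

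\emph{Dividing.} Since $x^\gamma h(x) \in [M_0 - \delta_0, M_0 + \delta_0]$ with positive lower endpoint,
\[
    (-1)^k x^k \frac{h^{(k)}}{h} = \frac{(-1)^k x^{\gamma+k} h^{(k)}}{x^\gamma h} .
\]
Bounding this ratio gives the upper estimate $(P_k M_0 + \delta_k)/(M_0 - \delta_0) = F_k^+$. If the numerator's lower bound $P_k M_0 - \delta_k$ is nonnegative, dividing by the largest denominator gives $F_k^-$. If it is negative, then $F_k^- < 0$, and dividing a quantity at least $P_k M_0 - \delta_k$ by a denominator in $[M_0 - \delta_0, M_0 + \delta_0]$ can produce something below $F_k^-$. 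I expect this case to be the main delicate point. I would handle it by noting that the statement is only useful when $F_k^- > 0$, and otherwise check whether the paper's convention tolerates it. Failing that, I would use $(P_k M_0 - \delta_k)/(M_0 - \delta_0)$ in that case, which still lies below $F_k^-$ whenever the numerator is negative.

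The other thing to verify carefully is that the real segment $(0,\ddx_*]$ lies in the region $\Re(z^{-1}) \geq \ddr_*$ where Lemma~\ref{lem:hE} holds. This follows from the explicit choice of $\ddx_*$ made in Corollary~\ref{cor:hE}.
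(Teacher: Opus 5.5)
Your proposal is correct and follows the paper's proof: normalize the expansions of Lemma~\ref{lem:h-near-zero} by $x^{\gamma+k}$, bound the $M_1$, $E_k$ and $\cE^{(k)}$ contributions by their increasing values at $\ddx_*$ to get $\delta_k$, and divide the resulting interval for $(-1)^k x^{\gamma+k} h^{(k)}(x)$ by the interval $[M_0-\delta_0, M_0+\delta_0]$ containing $x^\gamma h(x)$. The sign issue you flag is genuine, and the paper's ``standard interval arithmetic'' step also glosses over it: the lower bound $F_k^-$ follows only when $P_k M_0 \geq \delta_k$ (equivalently $F_k^- \geq 0$), which the paper assumes tacitly and says its numerics always ensure, while otherwise one only gets the weaker $(P_k M_0-\delta_k)/(M_0-\delta_0)$; so read the statement with that extra hypothesis, since it cannot be patched.
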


\begin{rmk}
    In practice, our rigorous numerics ensure that $F_k^- > 0$ for $k=1,2,3$, and
    that $F_k^-$ and $F_k^+$ are \emph{close}.
    This becomes important later, in Section~\ref{sec:cond}.
\end{rmk}

\begin{proof}[Proof of Corollary~\ref{cor:h-near-zero}]
    By Lemma~\ref{lem:h-near-zero},
    $ x^\gamma h(x) = M_0 + \bigl[ M_1 x + x^\gamma E_0(x) + x^\gamma \cE(x) \bigr]$.
    Write for $k=0,\ldots,3$,
    \[
        x^{\gamma+k} (-1)^k h^{(k)}(x)
        = H_k + \Delta_{k}(x)
        ,
    \]
    where $H_0 = M_0$, $H_1 = \gamma M_0$, $H_2 = \gamma(\gamma+1)M_0$,
    and $H_3 = \gamma(\gamma+1)(\gamma+2)M_0$.

    We apply the triangle inequality to $\Delta_k$ for $x \leq \ddx_*$,
    substituting the bounds $|E_k(x)| \leq D_k x^{2-\gamma-k}$ (Lemma~\ref{lem:h-near-zero})
    and $|\cE^{(k)}(x)| \leq k! \Delta_* (\theta_*/x)^k$ (Corollary~\ref{cor:hE}).
    This way,
    \[
        |\Delta_0 (x)|
        = \bigl| M_1 x + x^\gamma E_0(x) + x^\gamma \cE(x) \bigr|
        \leq |M_1| \ddx_* + D_0 \ddx_*^2 + \Delta_* \ddx_*^\gamma = \delta_0 .
    \]
    Similarly, $|\Delta_k(x)| \leq \delta_k$ for $k=1,2,3$.

    Note that $M_0 > \delta_0$ and $x^\gamma h(x) \in [M_0 - \delta_0, M_0 + \delta_0]$,
    in particular $x^\gamma h(x)$ is positive. Standard interval arithmetic on the ratio
    $\frac{H_k \pm \delta_k}{M_0 \mp \delta_0}$ results in the target bounds on
    $(-1)^k \frac{h^{(k)}(x)}{h(x)}$.
\end{proof}

\section{Conditions for memory loss}
\label{sec:cond}

In this section we prepare the proof that on $(0,1]$:
\begin{equation}
    \label{eq:hh}
    \Bigl(\frac{h'}{h}\Bigr)' > 0
    \quad \text{and} \quad
    \Bigl(\frac{h''}{h}\Bigr)' < 0
    .
\end{equation}
These properties are important for our results on memory loss.
We do not know how to prove~\eqref{eq:hh} analytically, so we set up a
verification using rigorous numerics.

On $(0, \ddx_*]$ we prove~\eqref{eq:hh} using Corollary~\ref{cor:h-near-zero},
and on $[1/2,1]$ using the Chebyshev approximation $h \approx \bh$ in the ellipse $A$,
both are pretty straightforward. It remains to cover the interval $(\ddx_*, 1/2)$.
This does not look hard: for every $x \in (0, 1/2)$ the value of $h(x)$ can be computed
recursively using the relation
\begin{equation}
    \label{eq:h-inv}
    h(f(x))
    = \frac{1}{2}h\Bigl(\frac{1+f(x)}{2}\Bigr) + \frac{h(x)}{f'(x)}
    .
\end{equation}
But numerically such a computation leads to a major loss of precision and is not practical,
so we need a more carefully chosen approach.

Thus, to cover the interval $(\ddx_*, 1/2)$, we verify numerically that
$(-1)^k h^{(k)} > 0$ on $[1/2,1]$ for $k=0,\ldots, 3$,
and recursively apply the following Lemma. Starting from a \emph{good}
behavior of $h$ on $(0, \ddx_*]$ (as in Corollary~\ref{cor:h-near-zero}), is
expands bounds on derivatives of $h$ from an interval
$[p,q] \subset (0,1/2]$ to $[f(p), f(q)]$.

\begin{lemma}
    \label{lem:hmon}
    Assume that $(-1)^k h^{(k)} > 0$ on $[1/2,1]$ for $k=0,\ldots, 4$.
    Suppose that on an interval $[p,q] \subset (0,1/2]$,
    \[
        (-1)^k h^{(k)}(x) \geq C_k
        \quad \text{for} \quad
        k = 0,\ldots,3
        \quad \text{and} \quad -h'(x) \leq C'
        ,
    \]
    where $C_k$ and $C'$ are positive constants.
    Then $(-1)^k h^{(k)} \geq D_k$ for $k=0,\ldots,3$ on $[f(p), f(q)]$, where $D_k$
    are as follows.

    Let $[f_j, F_j] = f^{(j)}([p,q])$. Note that $f_1, f_2 > 0$.
    For $j = 3,4$, let
    \[
        f_j^+ = \max(0, f_j)
        , \quad
        F_j^+ = \max(0, F_j)
        , \quad
        f_j^- = \max(0, -F_j)
        , \quad
        F_j^- = \max(0, -f_j)
        .
    \]
    Then
    \begin{align*}
        D_0
        & = \frac{C_0}{F_1}
        + \frac{1}{2} h\Bigl(\frac{1+f(q)}{2}\Bigr)
        \\
        D_1
        & = \frac{C_1}{F_1^2} + \frac{C_0 f_2}{F_1^3}
        - \frac{1}{4} h'\Bigl(\frac{1+f(q)}{2}\Bigr)
        \\
        D_2
        & = \frac{C_2}{F_1^3} + \frac{3 C_1 f_2}{F_1^4} + \frac{3 C_0 f_2^2}{F_1^5}
        + \frac{C_0 f_3^-}{F_1^4} - \frac{h(p) F_3^+}{f_1^4}
        + \frac{1}{8} h''\Bigl(\frac{1+f(q)}{2}\Bigr)
        \\
        D_3
        & = \frac{C_3}{F_1^4} + \frac{6 C_2 f_2}{F_1^5} + \frac{15 C_1 f_2^2}{F_1^6}
        + \frac{15 C_0 f_2^3}{F_1^7} + \frac{4 C_1 f_3^- + C_0 f_4^+}{F_1^5}
        \\
        & \qquad \qquad + \frac{10 C_0 f_2 f_3^-}{F_1^6}
        - \frac{4 C' F_3^+ + h(p) F_4^-}{f_1^5} - \frac{10 h(p) F_2 F_3^+}{f_1^6}
        - \frac{1}{16} h'''\Bigl(\frac{1+f(q)}{2}\Bigr)
        .
    \end{align*}
\end{lemma}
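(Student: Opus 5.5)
The plan is to differentiate the functional equation~\eqref{eq:h-inv} directly in the variable $y = f(x)$. Let $\phi = f^{-1}$ on $[f(p), f(q)]$, and write~\eqref{eq:h-inv} as
\[
    h(y) = \frac{1}{2} h\Bigl(\frac{1+y}{2}\Bigr) + g(y),
    \qquad g(y) = h(\phi(y))\, \phi'(y),
\]
using $1/f'(x) = \phi'(y)$. Differentiating $k$ times gives
\[
    (-1)^k h^{(k)}(y) = (-1)^k 2^{-k-1} h^{(k)}\Bigl(\frac{1+y}{2}\Bigr) + (-1)^k g^{(k)}(y),
\]
and I bound the two pieces separately from below.

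For the first piece, the hypothesis $(-1)^{j} h^{(j)} > 0$ on $[1/2,1]$ for $j = k+1$ shows that $(-1)^k h^{(k)}$ is decreasing there. This is exactly why the assumption runs up to $k = 4$. Since $q \le 1/2$, we have $f(q) \le 1$, so for $y \in [f(p), f(q)]$ the point $(1+y)/2$ lies in $[1/2,1]$ and is at most $(1+f(q))/2$. Hence this piece is at least $(-1)^k 2^{-k-1} h^{(k)}\bigl((1+f(q))/2\bigr)$, which matches the last term of each $D_k$ ($\tfrac12 h$, $-\tfrac14 h'$, $\tfrac18 h''$, $-\tfrac1{16}h'''$).

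For the second piece, I expand $g^{(k)}$ by the Leibniz rule and Faà di Bruno's formula. Each derivative of $\phi$ is expressed through $f$ evaluated at $x = \phi(y)$:
\[
    \phi' = \frac{1}{f'}, \qquad
    \phi'' = -\frac{f''}{f'^3}, \qquad
    \phi''' = -\frac{f'''}{f'^4} + \frac{3 f''^2}{f'^5},
\]
and similarly $\phi''''$, which involves $f''''$, $f''' f''$ and $f''^3$. Collecting terms, $(-1)^k g^{(k)}(y)$ becomes a sum of products $(-1)^j h^{(j)}(x)$ times monomials in $1/f'$, $f''$, $f'''$ and $f''''$.

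Since $f', f'' > 0$ on $[p,q]$, every term free of $f'''$ and $f''''$ comes with a positive coefficient. For example, for $k = 1$,
\[
    -g' = -h'(x)\,\phi'^2 - h(x)\,\phi'' = \frac{-h'}{f'^2} + \frac{h f''}{f'^3}.
\]
I bound such terms below by replacing $(-1)^j h^{(j)}$ with $C_j$, $f'$ with $F_1$ (the maximum, sitting in denominators) and $f''$ with $f_2$ (the minimum). This produces the coefficients $1, 3, 3$ for $k = 2$ and $1, 6, 15, 15, 10$ for $k = 3$, the latter from the Faà di Bruno combinatorics.

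The terms containing $f'''$ or $f''''$ have indefinite sign, so I split each of $f''', f''''$ into positive and negative parts, using $f_j^\pm$ and $F_j^\pm$. The part whose product with $(-1)^j h^{(j)}$ is favourable is bounded below by $C_j$ times the minimal magnitude $f_j^-$ or $f_4^+$, over $F_1$ to a power. The unfavourable part is bounded above in absolute value using the upper bounds on $h$ and $-h'$. These are $h(x) \le h(p)$, since $h$ is decreasing by $C_1 > 0$, and $-h'(x) \le C'$. In those terms $f'$ is bounded below by $f_1$ and $f''$ above by $F_2$, which accounts for the negative terms $-h(p)F_3^+/f_1^4$, $-(4C'F_3^+ + h(p)F_4^-)/f_1^5$ and $-10h(p)F_2F_3^+/f_1^6$.

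Adding the two pieces gives $(-1)^k h^{(k)}(y) \ge D_k$ for every $y \in [f(p), f(q)]$, as claimed. The main obstacle is the order-three bookkeeping: computing $g''' = \sum_{i}\binom{3}{i} (h\circ\phi)^{(i)} \phi^{(4-i)}$, expanding each $(h\circ\phi)^{(i)}$, and then grouping the terms by sign so that each coefficient matches the stated $D_3$. I would first carry this out symbolically, with $u = 1/f'$ and its derivatives, and check it with a computer algebra system before doing the sign splitting.
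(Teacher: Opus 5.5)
Your proposal is correct and is the same argument the paper sketches: differentiate \eqref{eq:h-inv} in $y=f(x)$, bound the $\frac12 h\bigl(\frac{1+y}{2}\bigr)$ part by monotonicity on $[1/2,1]$ (which is why derivatives up to order $4$ are assumed), and bound the $h(x)/f'(x)$ part term by term, splitting $f'''$ and $f''''$ into positive and negative parts. Your expansion gives $-g''' = \frac{-h'''}{f'^4} + \frac{6h''f''}{f'^5} + \frac{15(-h')f''^2}{f'^6} + \frac{15hf''^3}{f'^7} + \frac{4(-h')(-f''')}{f'^5} + \frac{hf''''}{f'^5} - \frac{10hf''f'''}{f'^6}$, which reproduces $D_3$ exactly; one small correction is that the coefficient $10$ comes from this sign-indefinite $f''f'''$ term, not from the sign-definite terms as your list suggests.
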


\begin{proof}
    We give a sketch of a proof.
    Differentiate~\eqref{eq:h-inv} with respect to $y = f(x)$, using $\frac{dx}{dy} = \frac{1}{f'(x)}$:
    \begin{align*}
        h(y)
        & = \frac{h(x)}{f'(x)} + \frac{1}{2}h\Bigl(\frac{1+y}{2}\Bigr)
        , \\
        -h'(y)
        & = \frac{-h'(x)}{(f'(x))^2} + \frac{h(x)f''(x)}{(f'(x))^3} - \frac{1}{4}h'\Bigl(\frac{1+y}{2}\Bigr)
        .
    \end{align*}
    Since $(-1)^k h^{(k)} > 0$ on $[1/2,1]$, both $h \bigl( \frac{1+y}{2} \bigr)$
    and $-h' \bigl( \frac{1+y}{2} \bigr)$ attain their minimum at the right endpoint $y = f(q)$.
    The bounds $D_0$ and $D_1$ follow straightforwardly using the bounds for
    $(-1)^k h^{(k)}(x)$ and $f', f''$ on $[p,q]$.

    The bounds $D_2$ and $D_3$ are obtained similarly, taking further derivatives
    with respect to $y$ and applying the assumed bounds.
\end{proof}

\appendix

\section*{Appendix}

\section{Basic lemmas}
\label{app:basic}

\begin{lemma}
    \label{lem:Imt}
    Let $\gamma > 0$ and $b = 2^\gamma$, and $g(t) = t(1 + b / t)^{-\gamma}$.
    For $t \in \C$ with $\Re(t) \geq b (\gamma + 1)$,
    \begin{equation*}
        |\Im(g(t))| \leq |\Im(t)|
        ,
    \end{equation*}
    with the equality only if $\Im(t) = 0$.
\end{lemma}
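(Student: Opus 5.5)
We reduce to a statement about the imaginary part of an analytic function on a vertical line, using $\Im(t)$ as the small parameter. Write $t = u + iv$ with $u \geq b(\gamma+1)$ and, by the symmetry $g(\bar t) = \overline{g(t)}$, assume $v > 0$. Since $g$ is real on the positive real axis, $\Im g(u) = 0$. Hence
\[
    \Im g(u+iv) = \int_0^v \Re g'(u+is)\, ds ,
\]
using $\partial_v \Im g = \Re g'$ for analytic $g$. It therefore suffices to show that $0 < \Re g'(t) < 1$, or at least $|\Re g'(t)| < 1$, on the half-plane $\Re t \geq b(\gamma+1)$. This gives $|\Im g(t)| < v$ for $v > 0$, and equality forces $v = 0$.

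To analyse $g'$, put $w = b/t$, so that $|w|$ is small and $\Re w > 0$. A direct computation gives
\[
    g'(t) = (1+w)^{-\gamma} + \gamma w (1+w)^{-\gamma-1} = (1+w)^{-\gamma-1}\bigl(1 + (\gamma+1) w\bigr).
\]
The condition $\Re t \geq b(\gamma+1)$ means $w$ lies in the closed disk $|w - \tfrac{1}{2(\gamma+1)}| \leq \tfrac{1}{2(\gamma+1)}$. Equivalently, $\Re(1/w) \geq \gamma+1$, so $|w| \leq 1/(\gamma+1)$ and $\Re w \geq (\gamma+1)|w|^2$.

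For $|g'| < 1$, I would check $|1+(\gamma+1)w|^2 < |1+w|^{2\gamma+2}$. The left side equals $1 + 2(\gamma+1)\Re w + (\gamma+1)^2|w|^2$. For the right side, I would use a convexity-type lower bound on $|1+w|^{2\gamma+2} = (1 + 2\Re w + |w|^2)^{\gamma+1}$, namely Bernoulli's inequality when $\gamma+1 \geq 1$, which always holds. This gives at least $1 + (\gamma+1)(2\Re w + |w|^2)$ plus the quadratic term $\binom{\gamma+1}{2}(2\Re w)^2$. Comparing these, we need
\[
    (\gamma+1)|w|^2 + 2\gamma(\gamma+1)(\Re w)^2 > (\gamma+1)^2|w|^2 ,
\]
that is, $2(\Re w)^2 > |w|^2$ whenever $\gamma > 0$. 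This is exactly where the disk condition $\Re w \geq (\gamma+1)|w|^2$ should enter, possibly combined with the sharper remainder in Bernoulli's inequality.

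The main obstacle is that the crude comparison above fails near the boundary point where $w$ is nearly imaginary, i.e.\ $|v|$ large. If so, I would instead control $\Re g'$ directly rather than $|g'|$. Set $\arg g' = \arg(1+(\gamma+1)w) - (\gamma+1)\arg(1+w)$. Then we need both that $|\arg g'| < \pi/2$, so that $\Re g' > 0$, and that $\Re g' < 1$, which follows from $|g'| \leq 1$ on the disk boundary via the maximum principle. For the modulus, I would verify $|g'| \leq 1$ on the boundary circle $\Re(1/w) = \gamma+1$, parametrized explicitly, and also as $w \to 0$. Here the expansion $g'(t) = 1 - \tfrac{\gamma(\gamma+1)}{2} w^2 + O(w^3)$ shows $|g'| \leq 1$ is delicate near $w = 0$, since $\Re(w^2)$ can be negative. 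This is the step I expect to require genuine care: near $w = 0$ I would use instead that $\Re g' \leq |g'|$ and $\Re g' = 1 - \tfrac{\gamma(\gamma+1)}{2}\Re(w^2) + \dots$, and verify the integrated statement $|\Im g(t)| < |\Im t|$ rather than a pointwise bound on $g'$.
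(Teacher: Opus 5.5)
Your reduction to a pointwise bound on $g'$ cannot work, because that bound is false on the half-plane. Use your own expansion $g'(t) = 1 - \frac{\gamma(\gamma+1)}{2}w^2 + O(|w|^3)$ with $w=b/t$. If $t=u+iv$ with $|v|\ge 2u$ and $|t|$ large, then $\Re(w^2) = b^2(u^2-v^2)/|t|^4 \le -\frac{3b^2}{5|t|^2}$, so $|g'(t)|\ge\Re g'(t)>1$. For example, on the boundary line $u=b(\gamma+1)$ one has $\Re g'(t) = 1+\frac{\gamma(\gamma+1)b^2}{2v^2}+O(|v|^{-3})$. So this is not a matter of extra care near $w=0$. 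Three of your steps fail. The Bernoulli comparison needs $2(\Re w)^2>|w|^2$, i.e.\ $|\arg w|<\pi/4$. The claim $|\Re g'|<1$ is false. The fallback ``$|g'|\le 1$ on the circle $\Re(1/w)=\gamma+1$ plus the maximum principle'' fails because $|g'|>1$ on the part of that circle near $w=0$. Along your vertical segment the integrand $\Re g'(u+is)-1$ is negative for $s\lesssim u$ and positive for $s\gtrsim u$, so the lemma holds only by cancellation. To leading order,
\[
    \Im g(t)-v \approx -\frac{\gamma(\gamma+1)b^2}{2}\int_0^v\frac{u^2-s^2}{(u^2+s^2)^2}\,ds
    = -\frac{\gamma(\gamma+1)b^2}{2}\,\frac{v}{u^2+v^2}
    .
\]
Your closing suggestion to ``verify the integrated statement'' is the lemma itself, so the proposal stops exactly where the work is needed.

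The missing ingredient is a representation in which the sign is visible without cancellation. The paper writes $1+b/t=Re^{-i\varphi}$ with $R>1$ and, for $v>0$, $\varphi\in(0,\pi/2)$. From $R\sin\varphi = bv/|t|^2\le b/(2u)$ it gets $\sin\varphi<\frac{1}{2(\gamma+1)}$, hence $(\gamma+1)\varphi<\pi/6$. It then reduces $\Im g(t)<v$ to $H(R)=R^{\gamma+1}\sin\varphi-R\sin((\gamma+1)\varphi)+\sin(\gamma\varphi)>0$, which follows from $H(1)>0$ and $H'(R)\ge 0$. If you want to keep your ``integrate a derivative'' idea, integrate along $w$ rather than along $\Im t$:
\[
    g(t)-t = t\bigl[(1+w)^{-\gamma}-1\bigr] = -\gamma b\int_0^1(1+sw)^{-\gamma-1}\,ds
    .
\]
For $v>0$ and $s\in(0,1]$ you have $\arg(1+sw)\in[-\varphi,0)$. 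Also $\tan\varphi\le|\Im w|\le\frac{1}{2(\gamma+1)}$ gives $(\gamma+1)\varphi<\frac12$. Hence each $(1+sw)^{-\gamma-1}$ has positive imaginary part, and $\Im g(t)<v$ strictly. The other inequality is immediate: $\arg g(t)=\arg t+\gamma\varphi\in(0,\pi)$, so $\Im g(t)>0>-v$. The case $v<0$ follows from $g(\bar t)=\overline{g(t)}$.
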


\begin{proof}
    Assume $t = u + iv \in \tcP$ with $v > 0$. The case $v < 0$ is similar, and the case $v = 0$ is trivial.
    Let $R, \varphi$ be the polar coordinates of $\overline{1 + b / t}$,
    so that $1 + b / t = R e^{-i \varphi}$.
    Then
    \[
        \frac{u}{v}
        = \frac{R \cos \varphi - 1}{R \sin \varphi}
        .
    \]
    Write $g(t) = t R^{-\gamma} e^{i \gamma \varphi}$. The inequality $\Im(g(t)) < v$ is equivalent to
    \[
        u \sin(\gamma \varphi) + v \cos(\gamma \varphi)
        < v R^\gamma
        .
    \]
    Dividing by $v$ and substituting $u/v$, this condition becomes $H(R) > 0$, where
    \[
        H(R)
        = R^{\gamma + 1} \sin \varphi - R \sin((\gamma + 1) \varphi) + \sin(\gamma \varphi).
    \]
    Observe that $R \sin \varphi = \frac{b v}{u^2 + v^2} \leq \frac{b}{2 u}$.
    Since $R > 1$ and $u \geq b(\gamma+1)$,
    \[
        \sin \varphi
        < \frac{b}{2 u} < \frac{1}{2 (\gamma + 1)}
        ,
    \]
    and using $\frac{\arcsin x}{x} \leq 2 \arcsin \frac{1}{2}$ for $x \in (0,1/2]$,
    \[
        2 (\gamma + 1) \varphi
        < 2 (\gamma + 1) \arcsin \Bigl[ \frac{1}{2(\gamma+1)} \Bigr]
        \leq \frac{\pi}{3}
        .
    \]
    In particular, $\varphi$ and $(\gamma + 1) \varphi$ are in $(0, \pi / 6]$.
    Hence
    \[
        H(1)
        = 2\sin(\varphi / 2) [\cos(\varphi / 2) - \cos((\gamma + 1/2) \varphi)]
        > 0.
    \]
    Finally, $H(R) > 0$ for all $R \geq 1$ follows from
    \[
        H'(R)
        = (\gamma + 1) R^\gamma \sin \varphi - \sin( (\gamma + 1) \varphi)
        \geq (\gamma + 1) R^\gamma \sin \varphi - (\gamma + 1) \sin \varphi
        \geq 0
        .
    \]
\end{proof}

In the following lemma, we are mostly interested in the case where
$a_k = 0$ for all large $k$.

\begin{lemma}
    \label{lem:log-poly}
    Let $f(x) = a_0 + \sum_{k=1}^\infty a_k x^k$ with $a_0 > 0$.
    Assume that $\sum_{k=1}^\infty |a_k| \delta^k < a_0$
    for some $\delta > 0$.
    Then for all $n \geq 1$ and $x \in \C$ with $|x| \leq \delta$,
    \[
        \log f(x)
        = \log a_0 + \sum_{k = 1}^n c_k x^k + E_n(x)
        ,
    \]
    where
    \[
        c_k
        = \frac{a_k}{a_0} - \frac{1}{k}
        \sum_{j = 1}^{k - 1} (k - j) c_{k - j}
        \frac{a_j}{a_0}
        , \qquad
        \tc_k
        = \frac{|a_k|}{a_0} + \frac{1}{k}
        \sum_{j = 1}^{k - 1} (k - j) \tc_{k - j} \frac{|a_j|}{a_0}
        ,
    \]
    and
    \[
        |E_n(x)|
        \leq -\log \Bigl[
            1 - \sum_{j=1}^\infty \frac{|a_j|}{a_0} \delta^j
        \Bigr]
        - \sum_{k=1}^n \tc_k \delta^k
        ,
    \]
    and
\end{lemma}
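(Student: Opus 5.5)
The approach is to write $f(x) = a_0(1 + g(x))$ with $g(x) = \sum_{k\ge1} (a_k/a_0) x^k$, so that $\log f = \log a_0 + \log(1+g)$. For $|x|\le\delta$ we have $|g(x)| \le G(\delta) := \sum_{j\ge1} \frac{|a_j|}{a_0}\delta^j < 1$, so the principal branch $\log(1+g(x))$ is analytic on the closed disk (a slightly larger open disk, by continuity of the strict inequality). Hence $\log f(x) - \log a_0 = \sum_{k\ge1} c_k x^k$ converges on $|x|\le\delta$, and the task splits into identifying the recursion for $c_k$ and bounding the tail $E_n(x) = \sum_{k>n} c_k x^k$.

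For the recursion, set $L(x) = \log(1+g(x))$ and differentiate: $(1+g)L' = g'$. Comparing coefficients of $x^{k-1}$ gives $k c_k + \sum_{j=1}^{k-1} (k-j) c_{k-j} \frac{a_j}{a_0} = k\frac{a_k}{a_0}$, which is exactly the stated formula for $c_k$. The same manipulation applied to $\tilde L(x) = -\log(1 - \tilde g(x))$ with $\tilde g(x) = \sum_{j} \frac{|a_j|}{a_0} x^j$ (so $(1-\tilde g)\tilde L' = \tilde g'$) yields the recursion for $\tc_k$ as the Maclaurin coefficients of $\tilde L$.

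The key step, and the only real obstacle, is a majorant argument: $|c_k| \le \tc_k$ for all $k$. I would prove it by induction directly from the two recursions. The triangle inequality on the $c_k$ recursion gives $|c_k| \le \frac{|a_k|}{a_0} + \frac1k\sum_{j=1}^{k-1}(k-j)|c_{k-j}|\frac{|a_j|}{a_0} \le \tc_k$, using the inductive hypothesis and the nonnegativity of all terms in the $\tc$ recursion. (Alternatively, note that $\log(1+g) = \sum_m (-1)^{m-1} g^m/m$ is coefficientwise dominated by $\sum_m \tilde g^m/m$.) In particular all $\tc_k \ge 0$, and $\sum_k \tc_k \delta^k = \tilde L(\delta) = -\log(1 - G(\delta))$ converges because $G(\delta)<1$.

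Finally, for $|x|\le\delta$,
\[
|E_n(x)| \le \sum_{k>n} |c_k|\,|x|^k \le \sum_{k>n} \tc_k \delta^k = -\log\bigl[1 - G(\delta)\bigr] - \sum_{k=1}^n \tc_k\delta^k,
\]
which is the claimed bound. The only care needed is to justify that the power series of $\log(1+g)$ converges up to the boundary $|x|=\delta$; this follows from absolute convergence of the dominating series $\sum \tc_k\delta^k$.
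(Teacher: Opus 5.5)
Your proposal is correct and follows essentially the same route as the paper: differentiate the logarithm and match coefficients to obtain both recursions, dominate $|c_k|$ by $\tc_k$ through the majorant $-\log(1-\tilde g)$, and bound the tail by $\sum_{k>n}\tc_k\delta^k$. Your induction for $|c_k|\le\tc_k$ fills in a step the paper only asserts. One minor quibble: your parenthetical claim that $\log(1+g)$ is analytic on a slightly larger disk can fail when infinitely many $a_k\neq 0$, since the radius of convergence may be exactly $\delta$. Your closing absolute-convergence and continuity argument (or the rearrangement of $\sum_m (-1)^{m-1}g^m/m$) already handles the boundary $|x|=\delta$ without it.
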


\begin{proof}
    Without loss of generality, assume $a_0 = 1$.
    Let $\log f(x) = \sum_{k = 1}^\infty c_k x^k$.
    Taking the derivative gives $f'(x) = f(x) \frac{d}{dx} \log f(x)$.
    Writing both sides as formal power series and equating the coefficients
    yields the recurrence for $c_k$.

    To bound the tail $E_n(x) = \sum_{k=n+1}^\infty c_k x^k$,
    we use the majorant series
    $\tf(x) = 1 - \sum_{k=1}^\infty |a_k| x^k$.
    The Maclaurin coefficients of
    $-\log \tf(x) = \sum_{k=1}^\infty \tc_k x^k$
    are governed by the recurrence in the statement of the lemma;
    in particular, $\tc_k \geq |c_k|$.

    For $|x| \leq \delta$,
    \[
        |E_n(x)|
        \leq \sum_{k=n+1}^\infty |c_k| \delta^k
        \leq \sum_{k=n+1}^\infty \tc_k \delta^k
        = -\log \tf(\delta) - \sum_{k=1}^n \tc_k \delta^k
        .
    \]
\end{proof}

\section{Computation of Integrals}
\label{app:integrals}

In this appendix, we describe the algorithms we use in our code for computing definite
integrals of the scaled to $[0,1]$ Chebyshev polynomials, $\varphi_m(u) = T_m(2u - 1)$,
over the interval $[0, x]$.

\subsection{Fractional power integrals}

To evaluate integrals of the form
\[
    I_{m, \beta}
    = \int_0^x u^\beta \varphi_m(u) \, du
\]
for $\beta > -1$, we use the relation
$2(2u-1)\varphi_m(u) = \varphi_{m+1}(u) + \varphi_{m-1}(u)$.
Integrating by parts, it yields the recurrence
\[
    I_{m, \beta}
    = \frac{
        2\beta I_{m-1, \beta} + (m - 3 - \beta) I_{m-2, \beta}
        - 4(1 - x) x^{\beta+1} \varphi_{m-1}(x)
    }{\beta + m + 1}
    .
\]
This allows a computation for $m \geq 2$ after explicitly computing
\[
    I_{0, \beta} = \frac{x^{\beta+1}}{\beta+1}
    \quad \text{and} \quad
    I_{1, \beta} = \frac{2x^{\beta+2}}{\beta+2} - \frac{x^{\beta+1}}{\beta+1}
    .
\]

\subsection{Logarithmic integrals}
For logarithmic integrals of the form
\[
    L_m = \int_0^x \varphi_m(u) \log u \, du ,
\]
we apply a similar technique. By differentiating the recurrence for $I_{m, \beta}$ with respect to $\beta$ and evaluating it at $\beta = 0$, we obtain a recurrence coupling $L_m$ with the non-logarithmic integrals $I_{m, 0}$:
\[
    L_m = \frac{
        (m-3) L_{m-2} - I_{m, 0} + 2 I_{m-1, 0} - I_{m-2, 0}
        - 4x (1 - x) \log(x) \varphi_{m-1}(x)
    }{m+1}
    .
\]
Now $L_m$ for $m \geq 2$ is computed recursively, starting from
\[
    L_0 = x \log x - x
    \quad \text{and} \quad
    L_1 = (x^2 - x) \log x - \frac{1}{2}x^2 + x
    .
\]

\subsection*{Acknowledgements}

The authors are grateful to Great Bay University, and in particular Cece (Qian Zhang),
for warm hospitality and support, and to Yiwei Zhang (Anhui University of Science and Technology)
for helpful comments.

CW was supported by the Australian Research Council Discovery Early Career Award DE260101080.

\end{document}